\theoremstyle{plain}
\newtheorem{thm}{Theorem*}[section]
\newtheorem{theorem}[thm]{Theorem}
\newtheorem{lemma}[thm]{Lemma}
\newtheorem{corollary}[thm]{Corollary}
\newtheorem{proposition}[thm]{Proposition}
\theoremstyle{definition}
\newtheorem{remark}[thm]{Remark}
\newtheorem{definition}[thm]{Definition}
\newtheorem{example}[thm]{Example}
\numberwithin{equation}{section}
\newcommand{\0}{{\mathcal O}}
\newcommand{\sA}{{\mathcal A}}
\newcommand{\sB}{{\mathcal B}}
\newcommand{\sM}{{\mathcal M}}
\newcommand{\sV}{{\mathcal V}}
\newcommand{\sW}{{\mathcal W}}
\newcommand{\C}{{\mathbb C}}
\newcommand{\BP}{{\mathbb P}}
\newcommand{\Q}{{\mathbb Q}}
\newcommand{\R}{{\mathbb R}}
\newcommand{\Z}{{\mathbb Z}}
\newcommand{\Aut}{{\rm Aut}}
\newcommand{\rank}{{\rm rank}}
\newcommand{\id}{{\rm id}}
\newcommand{\tr}{{\rm trace}}
\newcommand{\disc}{{\rm disc}}
\newcommand{\OP}[1]{\operatorname{#1}}
\newcommand{\MF}[1]{\mathfrak{#1}}
\newcommand{\DD}{\Delta}
\newcommand{\DDz}{\Delta_0}
\newcommand{\ANG}[1]{\langle #1 \rangle}
\newcommand{\ANGz}[1]{\langle #1 \rangle^{}_{0}}
\newcommand{\SqMat}[4]{\begin{pmatrix}#1&#2\\#3&#4\end{pmatrix}}
\newcommand{\SqMatSmall}[4]{\begin{smallmatrix}#1&#2\\#3&#4\end{smallmatrix}}
\newcommand{\ROOT}{{\rm Root}}
\newcommand{\VE}[2]{#1^{[#2]}}
\newcommand{\GAM}{{D}} 
\newcommand{\GAMu}{{D}}
\newcommand{\PGB}[1]{\Gamma^\times(#1)}
\newcommand{\PSB}[1]{\Gamma^1(#1)}
\newcommand{\SQ}[1]{#1^\times/(#1^\times)^2}
\newcommand{\CONGa}{\,\,{\displaystyle\mathop{\rightarrow}^\sim}\,\,} 
\newcommand{\clfC}{\mathcal{B}}
\newcommand{\trzero}{\widetilde{B}}
\newcommand{\trzeroQ}{\widetilde{B}_\Q}
\newcommand{\OSUM}{\bot}
\newcommand{\LL}[1]{(#1)} 
\newcommand{\LAT}{{\mathcal{L}}}
\newcommand{\LATa}{{\mathcal{N}}}
\newcommand{\LATb}{{\mathcal{K}}}
\newcommand{\LatK}{\Lambda_{\scriptstyle \rm{K3}}} 
\newcommand{\CLF}[1]{\OP{Cl}(#1)} 
\newcommand{\CLFP}[1]{\OP{Cl}^+(#1)} 
\newcommand{\CLFM}[1]{\OP{Cl}^-(#1)} 
\newcommand{\CLFPM}[1]{\OP{Cl}^{\pm}(#1)} 
\newcommand{\LatM}{L^\flat} 
\newcommand{\OverQ}{_{\Q}}
\newcommand{\CLFinvo}[1]{#1^*}
\newcommand{\dE}{E^\vee} 
\newcommand{\NORM}{\mathcal{N}}
\newcommand{\CONE}{\Omega^+}
\def\Hom{\mathop{\rm Hom}\nolimits}
\title[Free automorphism groups of K3 surfaces]%
{Free automorphism groups of K3 surfaces with Picard number $3$}
\author{Kenji Hashimoto and Kwangwoo Lee}
\date{\today}
\begin{document}

\setcounter{tocdepth}{1}

\begin{abstract}
It is known that
 the automorphism group of any projective K3 surface is
 finitely generated \cite{St}.
In this paper,
 we consider a certain kind of K3 surfaces with Picard number $3$
 whose automorphism groups are isomorphic to
 congruence subgroups of the modular group $PSL_2(\Z)$.
In particular,
 we show that a free group of arbitrarily large rank
 appears as the automorphism group of such a K3 surface.
\end{abstract}

\maketitle

\tableofcontents

\section{Introduction} \label{sect:intro}

In \cite{St}, Sterk showed that the automorphism group of
 a projective K3 surface over the complex numbers
 is finitely generated.
In this paper, we consider a class of projective K3 surfaces
 whose automorphism groups are free groups
 such that their ranks
 are not bounded above.
In particular, we study automorphism groups of K3 surfaces $X_n$
 whose Picard lattices are isomorphic to $U(n) \OSUM \LL{-2n}$,
 $n\in \Z_{\geq 2}$.
(See Proposition \ref{prop:auto-k3-Xn}.)

A typical example of such K3 surfaces is Wehler's example \cite{We},
 where $n=2$.
Let $X_2$ be a very general $(2,2,2)$-hypersurface in
 $\BP^1\times \BP^1\times \BP^1$.
Then $X_2$ has Picard number $3$ and its Picard lattice
 is generated by the pull-backs $H_i$ of a point by the $i$th projection
 $p_i \colon X_2 \rightarrow \BP^1$ ($i=1,2,3$).
The intersection matrix with respect to the basis $(H_i)$ is
\begin{equation*}
 \begin{pmatrix} 0& 2&2\\2&0&2\\ 2&2&0 \end{pmatrix}
 \cong U(2) \OSUM \LL{-4}.
\end{equation*}
The double covering
 $p_1 \times p_2 \colon X_2 \rightarrow \BP^1 \times \BP^1$
 induces an involution $\iota_{12}$ on $X_2$
 as the covering transformation.
Similarly we obtain involutions $\iota_{13}$ and $\iota_{23}$.
In fact these three involutions generate the automorphism group of $X_2$:
\begin{equation*}
 \Aut(X_2) \cong 
 \{\alpha \in PGL_2(\Z) \bigm| \alpha \equiv \pm I \bmod{2} \}
 \cong C_2 * C_2 * C_2.
\end{equation*}
Here each $C_2$ is a cyclic group of order $2$
 generated by each $\iota_{ij}$.
See \cite{We} for the details of this result.

{\bf Notations.}
Let $L$ be an even lattice.
Then, $q(L)$ and $O(q(L))$ denote the discriminant form of $L$
 and its isomorphism group, respectively;
 $\CLFP{L}$ and $\CLFM{L}$ denote the even and odd parts
 of the Clifford algebra $\CLF{L}$ of $L$, respectively.
(We summarize our convention in Section \ref{subsect:convention}.)

In what follows, we overview
 our main theorem (Theorem \ref{thm:MAIN}) and its applications.
These can be considered as
 a generalization (or an analogy) of Wehler's result.

\begin{theorem}[see Theorem {\ref{thm:MAIN}} for details]
Let $L$ be a non-degenerate even lattice of rank $3$.
Then there is a natural isomorphism
\begin{equation}
 O_\GAM(L) \cong
 \CLFPM{L}^\times / \{ \pm 1 \},
\end{equation}
 where $O_\GAM(L) := \OP{Ker}(O(L) \rightarrow O(q(L)))$
 is the discriminant kernel for $O(L)$ and
 $\CLFPM{L}^\times$ is given as
\begin{equation}
 \CLFPM{L}^\times =
 \{ \alpha \in \CLFP{L} \cup \CLFM{L} \bigm|
 N \alpha = \pm 1 \}.
\end{equation}
Moreover, under this isomorphism, we have
\begin{align}
 & SO_\GAM(L) {} \cong
 \OP{Cl}^+(L)^\times / \{ \pm 1 \}, \quad
 \OP{Cl}^+(L)^\times =
 \{ \alpha \in \OP{Cl}^+(L) \bigm| N \alpha = \pm 1 \}; \\
 & SO^+_\GAM(L)
 {} \cong \OP{Cl}^+(L)^1 / \{ \pm 1 \}, \quad
 \OP{Cl}^+(L)^1 =
 \{ \alpha \in \OP{Cl}^+(L) \bigm| N \alpha = 1 \}.
\end{align}
\end{theorem}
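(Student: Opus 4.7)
The plan is to realise $O_\GAM(L)$ as the image of a graded twisted conjugation representation of the Clifford units of norm $\pm 1$, refining the classical spin--orthogonal correspondence by the discriminant-kernel condition. The rank-$3$ case is especially transparent since $\CLFP{L}\otimes\Q$ is a rank-$4$ quaternion algebra and the odd part is obtained from the even part by multiplication by the central volume element $e_1e_2e_3 \in \CLFM{L}$.

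First I would construct the homomorphism $\phi\colon \CLFPM{L}^\times \to O(L)$ by graded twisted conjugation: for a homogeneous unit $\alpha$ of parity $\epsilon \in \{0,1\}$ and $v \in L \subset \CLF{L}$, set $\phi(\alpha)(v) = (-1)^\epsilon \alpha v \alpha^{-1}$. Since $N\alpha = \pm 1$ the inverse $\alpha^{-1} = \pm \bar\alpha$ already lies in $\CLF{L}$, so $\phi(\alpha)$ is an integral operator. That $\phi(\alpha)(v) \in L$ and $(\phi(\alpha)v,\phi(\alpha)w)=(v,w)$ follow from $vw+wv=2(v,w)$ and the multiplicativity of $N$; both are checked directly on the basis $1,\,e_i,\,e_ie_j,\,e_1e_2e_3$ of $\CLF{L}$. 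The kernel consists of units that graded-commute with all of $L$: for $\alpha$ even this forces $\alpha$ into the centre of $\CLFP{L}$, which is the ground ring; for $\alpha$ odd it forces $\alpha$ to anticommute with every $e_i$, and a short computation using non-degeneracy of $L$ shows no nonzero such $\alpha$ exists. Hence $\ker\phi = \{\pm 1\}$.

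The main obstacle will be identifying $\mathrm{Im}\,\phi$ with $O_\GAM(L)$ on the nose. The inclusion $\mathrm{Im}\,\phi \subseteq O_\GAM(L)$ is obtained by extending $\phi(\alpha)$ to $L^\vee \subset L_\Q \subset \CLF{L_\Q}$ by the same formula and checking that $\phi(\alpha)(x) - x \in L$ for every $x \in L^\vee$; this follows, using $\alpha,\alpha^{-1} \in \CLF{L}$, from a direct commutator/anticommutator computation involving $vx+xv = 2(v,x) \in \Z$ for $v \in L$, $x \in L^\vee$. For the reverse inclusion, over $\Q$ the map $\phi_\Q$ is a classical isomorphism: the even part realises $SO(L_\Q) \cong \CLFP{L_\Q}^\times/\Q^\times$ for the quaternion algebra $\CLFP{L_\Q}$, while odd units supply the determinant $-1$ coset through reflections $v \mapsto -uvu^{-1} = s_u(v)$. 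Any $g \in O_\GAM(L) \subset O(L_\Q)$ thus lifts to some $\alpha \in \CLF{L_\Q}^\times$, unique up to $\Q^\times$. The delicate step is to use the triviality of $g$ on $L^\vee/L$ to rescale $\alpha$ into $\CLF{L}$ with $N\alpha = \pm 1$: one first clears denominators to place $\alpha$ in $\CLF{L^\vee}$, then the discriminant-triviality of $g$ forces $\alpha$ into $\CLF{L}$, and finally a $\Q^\times$-rescaling normalises $N\alpha$ to lie in $\{\pm 1\}$.

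Finally, the two refinements are natural byproducts of the grading. Twisted conjugation by $\CLFP{L}$ acts on $L_\R$ as a product of plane rotations (infinitesimally generated by $e_ie_j$), so it has determinant $+1$, while $\CLFM{L}$ contains the reflections $s_u$ and contributes the determinant $-1$ coset; hence $\phi$ restricted to $\CLFP{L}^\times$ surjects onto $SO_\GAM(L)$, giving the second isomorphism. For the third, the real spinor norm of $\phi(\alpha)$ equals $N\alpha$ modulo $(\R^\times)^2$ (this is immediate from the standard definition of the spinor norm via Clifford lifts), so within $\CLFP{L}^\times/\{\pm 1\} \cong SO_\GAM(L)$ the condition $N\alpha = 1$ cuts out precisely the subgroup of trivial real spinor norm, i.e.\ $SO^+_\GAM(L)$.
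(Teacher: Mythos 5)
Your construction of the graded twisted-conjugation map $\phi$, the computation of its kernel, and the inclusion $\OP{Image}(\phi)\subseteq O_\GAM(L)$ are all sound and agree with the paper's setup: the rational statement you invoke is Proposition \ref{prop:funct_cliff}, and your determinant and spinor-norm refinements at the end are exactly how the paper passes to Corollaries \ref{cor:ortho_plus} and \ref{cor:clifford_plus}. The genuine gap is the reverse inclusion $O_\GAM(L)\subseteq\OP{Image}(\phi)$, which is the heart of the theorem, and your ``delicate step'' does not contain an argument for it. Given $g\in O_\GAM(L)$ you lift it to $\alpha\in\CLFPM{L}\OverQ^\times$, unique up to $\Q^\times$; after rescaling you may certainly assume $\alpha$ is a primitive element of $\CLFP{L}$ or of $\CLFM{L}$, but the issue was never whether $\alpha$ lies in $\CLF{L}$ --- it is whether that primitive representative satisfies $N\alpha=\pm1$, i.e.\ is a \emph{unit}. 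Your final ``$\Q^\times$-rescaling normalises $N\alpha$ to lie in $\{\pm1\}$'' cannot achieve this: rescaling by $\lambda\in\Q^\times$ multiplies $N\alpha$ by $\lambda^2$, so it moves $N\alpha$ only within its square class, and even when $N\alpha=\pm m^2$ the rescaled element $\alpha/m$ will in general leave $\CLF{L}$. Note that all of $SO(L)$ is already realised by conjugation by elements of the normalizer $\NORM(B)$ of the order $B=\CLFP{L}$ in $B\OverQ^\times$ (Remark \ref{rem:SO_normal}), and normalizing elements of Atkin--Lehner type are typically primitive of reduced norm $\neq\pm1$; so one genuinely must exploit the hypothesis $\bar g=\id_{A(L)}$ to force $N\alpha=\pm1$, and no mechanism for that appears in your sketch.

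This missing implication is exactly what the paper's proof of Theorem \ref{thm:MAIN} is organised around: it embeds $L(1)$ and $L(-1)$ as mutually orthogonal primitive sublattices $P^{+},P^{-}$ of the unimodular lattice $W=\wedge^2\CLFP{L}\cong U^{\OSUM 3}$ (Lemma \ref{lem:def_Psi}), shows that two-sided multiplication by $B$ acts on $P^{-}$ by the scalar $\OP{Nr}$ while inducing the conjugation action on $P^{+}$ (Lemma \ref{lem:action}), transports the condition $\bar g=\id$ through the natural isomorphism $A(P^{+})\cong A(P^{-})$ to conclude that the normalised operator preserves the unimodular lattice $W$, and finally invokes the maximality of $SL_4(\Z)$ in $SL_4(\R)$ (Lemma \ref{lem:sl4_map}) to deduce $B\cdot\alpha_0^{-1}=B$ and hence $\alpha_0\in B^\times$; the odd case is handled analogously via the central element $E$ and the pairing of Section \ref{subsect:dual}. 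Some substitute for this chain (or a direct local argument at each prime dividing $\disc(L)$) is required to close your proof.
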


\begin{corollary}
Let $X$ be a K3 surface with Picard number $3$.
Suppose that the Picard lattice $S_X$ of $X$
 is non-degenerate.
Moreover, we assume that
 $X$ contains no $(-2)$-curves, that is,
 $S_X$ contains
 no elements with self-intersection number $=-2$.
Then
\begin{equation}
 \Aut(X) \cong
  \{ g \in O^+(S_X) \bigm| \bar{g} = \pm \id \}
 \cong O_\GAM(S_X)
 \cong \CLFPM{S_X}^\times / \{ \pm 1 \},
\end{equation}
 where $\bar{g}$ denotes the image of $g$ in $O(q(S_X))$.
The group of symplectic automorphisms corresponds to
\begin{equation}
 O^+_\GAM(S_X) \cong
 \left( \CLFP{L}^1 \sqcup
  \{ \alpha \in \CLFM{L} \bigm| N \alpha = -1 \} \right)
  / \{ \pm 1 \}.
\end{equation}
(See Proposition \ref{prop:odd_pic_num}
 and Corollary \ref{cor:ortho_plus}.)
\end{corollary}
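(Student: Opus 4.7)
The plan is to apply the global Torelli theorem for K3 surfaces and then reduce to the lattice statement proved in Theorem~\ref{thm:MAIN}. First I would observe that Torelli identifies $\Aut(X)$ with the group of Hodge isometries of $H^2(X,\Z)$ preserving the ample cone; any such isometry respects the orthogonal splitting $S_X \oplus T_X$ after tensoring with $\Q$. The hypothesis that $S_X$ contains no $(-2)$-vectors makes the Weyl group of $(-2)$-reflections trivial, so the ample cone coincides with one of the two components of the positive cone of $S_X$, and restriction therefore sends $\Aut(X)$ into $O^+(S_X)$.

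Next I would invoke Nikulin's gluing criterion: since $\LatK$ is unimodular and contains $S_X \oplus T_X$ with finite index, $g \in O(S_X)$ extends to $\LatK$ iff there exists $h \in O(T_X)$ with $\bar g = \bar h$ under the canonical identification $O(q(S_X)) \cong O(q(T_X))$, and the extension is a Hodge isometry iff $h$ is Hodge on $T_X$. Since $\rk T_X = 19$ is odd, Proposition~\ref{prop:odd_pic_num} forces the Hodge isometries of $T_X$ to be $\pm \id$, so $\bar g = \pm \id$; every such $g$ is realized by choosing $h = \pm \id$. This gives $\Aut(X) \cong \{g \in O^+(S_X) : \bar g = \pm \id\}$. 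Because $S_X$ has signature $(1,2)$, the element $-\id$ lies in $O^-(S_X)$, so the map $g \mapsto g$ when $\bar g = \id$ and $g \mapsto -g$ when $\bar g = -\id$ defines a group isomorphism onto $O^+_\GAM(S_X) \sqcup O^-_\GAM(S_X) = O_\GAM(S_X)$. Composing with Theorem~\ref{thm:MAIN} yields the final identification with $\CLFPM{S_X}^\times/\{\pm 1\}$.

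For the symplectic statement, an automorphism is symplectic iff it acts as $+\id$ on $H^{2,0}$; tracing through the lift above, this forces $h = +\id$ on $T_X$, equivalently $\bar g = +\id$. Hence the symplectic automorphism group corresponds to $O^+(S_X) \cap O_\GAM(S_X) = O^+_\GAM(S_X)$. Under the Clifford identification this subgroup decomposes by the parity of the Clifford element: the even part with spinor norm $+1$ recovers $SO^+_\GAM(S_X) \cong \CLFP{S_X}^1/\{\pm 1\}$ from the main theorem, while the odd part contributes the $\det = -1$ elements of $O^+_\GAM$. The main obstacle will be a consistent sign check: since a single-vector reflection $r_v$ lies in $O^+(S_X)$ precisely when $v^2 < 0$, and in the paper's Clifford convention such $v$ satisfies $Nv = -1$, the odd contribution is exactly $\{\alpha \in \CLFM{S_X} : N\alpha = -1\}/\{\pm 1\}$, completing the stated decomposition.
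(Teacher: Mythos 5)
Your overall route---the global Torelli theorem, gluing a pair $(g,h)$ across $A(S_X)\cong A(T_X)$ inside the unimodular lattice $H^2(X,\Z)$, the odd rank of $T_X$ forcing $h=\pm\id$, and then Theorem \ref{thm:MAIN}---is the same as the paper's, which proves this corollary by combining Proposition \ref{prop:odd_pic_num} with Corollary \ref{cor:ortho_plus}. There is, however, a genuine gap: you never prove that the restriction map $\Aut(X)\to O^+(S_X)$ is \emph{injective}, nor that $\id\neq-\id$ in $O(q(S_X))$. As written, your argument yields a surjection of $\Aut(X)$ onto $H:=\{g\in O^+(S_X)\mid\bar g=\pm\id\}$ and an injection of $H$ into $O_\GAM(S_X)$, but both can fail to be isomorphisms precisely when $A(S_X)$ is a $2$-elementary group: in that case $-\id_{T_X}$ glues with $\id_{S_X}$ to a Hodge isometry preserving the ample cone, i.e.\ a non-trivial anti-symplectic involution acting trivially on $S_X$, so the kernel of the restriction is non-trivial; and simultaneously $H=O^+_\GAM(S_X)$ has index $2$ in $O_\GAM(S_X)=O^+_\GAM(S_X)\times\{\pm\id_{S_X}\}$, so your map $g\mapsto\varepsilon_g\cdot g$ is not surjective (indeed $\varepsilon_g$ is not even well defined when $\id=-\id$ in $O(q(S_X))$).

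The missing ingredient is the second use the paper makes of the hypothesis $\ROOT(S_X)=\varnothing$: by Nikulin's classification of anti-symplectic involutions \cite{Ninvo}, an involution acting as $-\id$ on $T_X$ and trivially on $S_X$ would force $\ROOT(S_X)\neq\varnothing$, so the absence of $(-2)$-curves rules it out; this gives both the injectivity of $\Aut(X)\to O^+(S_X)$ and the fact that $A(S_X)$ is not $2$-elementary, after which your two maps do become isomorphisms. You invoke the no-roots hypothesis only once (to identify the ample cone with a component of the positive cone), whereas it is needed twice. The rest of your argument---the identification of the symplectic subgroup with $O^+_\GAM(S_X)$ and its Clifford description, via $\bar\varphi_\alpha=\varepsilon_\alpha\cdot(N\alpha)\cdot\id$ forcing $N\alpha=1$ on the even part and $N\alpha=-1$ on the odd part---is correct and agrees with Corollary \ref{cor:ortho_plus}.
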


For non-zero integers $k$ and $l$,
 we define a $\Z$-subalgebra $B_{k,l}$
 of the matrix algebra $M_2(\Z)$ of size $2$ by
\begin{equation}
 B_{k,l} :=
 \{ \alpha=\begin{pmatrix} a& b \\ c& d \end{pmatrix} \in M_2(\Z)
 \bigm|
 a-d \equiv c \equiv 0 \bmod k, ~
 b\equiv 0 \bmod l \}.
\end{equation}
Set $\LAT_{k,l} := U(k) \OSUM \LL{2 l}$.
Then $\OP{Cl}^+(\LAT_{k,l})$ is isomorphic to $B_{k,l}$ as a $\Z$-algebra.
(We study the lattice $\LAT_{k,l}$ in Section \ref{sect:LAT_kl}.)
In this setting, for $\alpha \in B_{k,l}$
 we have $N \alpha = \det(\alpha)$.
Hence
\begin{equation}
 SO_\GAM(\LAT_{k,l}) \cong B_{k,l}^\times / \{ \pm 1 \}, \quad
 B_{k,l}^\times
 = \{ \alpha \in B_{k,l} \bigm|  \det(\alpha)=\pm 1 \}.
\end{equation}
By Lemma \ref{lem:condition_negative_det},
 $B^\times_{k,l} \subseteq SL_2(\Z)$
 if and only if $-1$ is not a quadratic residue modulo $k$.
In addition, by Proposition \ref{prop:CL_minus_unit},
 $O_\GAM(\LAT_{k,l})=SO_\GAM(\LAT_{k,l})$
 if and only if $\LAT_{k,l}$ does not represent $\pm 2$,
 that is, any $v \in \LAT_{k,l}$ has self-intersection number
 $\neq \pm 2$.
Therefore, we obtain:

\begin{proposition} \label{prop:auto-k3-Xn}
For $n \in \Z_{\geq 2}$, let $X_n$
 be a K3 surface whose Picard lattice is isomorphic to
 $U(n) \OSUM \LL{-2 n}$.
Then
\begin{equation}
 \Aut(X_n) \cong G_n := B_{n,n}^\times / \{ \pm I \}
 \subseteq PGL_2(\Z).
\end{equation}
Moreover, the image of the natural map
 $\Aut(X_n) \rightarrow GL(H^{2,0}(X_n)) \cong \C^\times$
 is a cyclic group of order $m=1$ or $2$,
 where $m=2$ if and only if $-1$ is a quadratic residue modulo $n$.
\end{proposition}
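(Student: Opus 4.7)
The plan is to invoke the Corollary to identify $\Aut(X_n)$ with $\CLFPM{S_X}^\times/\{\pm 1\}$, translate to $B_{n,n}$ through the isomorphism $\CLFP{\LAT_{n,n}} \cong B_{n,n}$ recalled above, and then read off the image in $\C^\times$ as the index of the symplectic subgroup.

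First I would verify the hypotheses of the Corollary for $S_X \cong U(n)\OSUM\LL{-2n}$. Non-degeneracy is immediate since the determinant of $U(n)\OSUM\LL{-2n}$ equals $2n^3\neq 0$. For the absence of $(-2)$-curves, writing a general element as $v=ae_1+be_2+cf$, with $(e_1,e_2)$ a standard basis of $U(n)$ and $f$ a generator of $\LL{-2n}$, gives $v^2 = 2n(ab-c^2)$; thus $v^2=-2$ would force $n\mid 1$, impossible for $n\geq 2$. The Corollary therefore yields $\Aut(X_n)\cong O_\GAM(S_X)\cong \CLFPM{S_X}^\times/\{\pm 1\}$.

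Next I would pass from $S_X$ to $\LAT_{n,n}=U(n)\OSUM\LL{2n}=S_X(-1)$. Because an isometry of $L$ is tautologically an isometry of $L(-1)$, and $O(q(L))$ coincides with $O(-q(L))$ as an automorphism group of the underlying finite abelian group, one has $O_\GAM(S_X)=O_\GAM(\LAT_{n,n})$. The same norm computation shows $\LAT_{n,n}$ represents neither $+2$ nor $-2$ when $n\geq 2$, so Proposition \ref{prop:CL_minus_unit} gives $O_\GAM(\LAT_{n,n})=SO_\GAM(\LAT_{n,n})$. Combined with the $\Z$-algebra isomorphism $\CLFP{\LAT_{n,n}}\cong B_{n,n}$ (under which $N$ becomes $\det$) recalled in the excerpt, this identifies $\Aut(X_n)$ with $B_{n,n}^\times/\{\pm I\}=G_n\subseteq PGL_2(\Z)$.

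For the second assertion, by the Corollary the kernel of the map $\Aut(X_n)\to GL(H^{2,0}(X_n))$ (the symplectic subgroup) is $O^+_\GAM(S_X)$; since $O_\GAM=SO_\GAM$ here, this subgroup also equals $SO^+_\GAM(S_X)$, which corresponds under the matrix algebra isomorphism to $\{\alpha\in B_{n,n}^\times:\det\alpha=1\}/\{\pm I\}$. Hence the image in $\C^\times$ is isomorphic to the image of the determinant map $B_{n,n}^\times\to\{\pm 1\}$, which equals $\{\pm 1\}$ exactly when $\{\alpha\in B_{n,n}^\times:\det\alpha=-1\}$ is non-empty. By Lemma \ref{lem:condition_negative_det}, this happens iff $-1$ is a quadratic residue modulo $n$, giving $m=2$ in that case and $m=1$ otherwise (cyclicity is automatic for a finite subgroup of $\C^\times$). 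The main subtlety to watch is the compatibility of the $O^+$ decoration with both the sign change $L\mapsto L(-1)$ and the ring isomorphism $\CLFP{\LAT_{n,n}}\cong B_{n,n}$; once this bookkeeping is unwound no genuine obstacle remains.
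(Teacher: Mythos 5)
Your proposal is correct and assembles exactly the same ingredients the paper uses — the Corollary to the Main Theorem giving $\Aut(X_n)\cong O_\GAM(S_X)$, the identification $\CLFP{\LAT_{k,l}}\cong B_{k,l}$, Proposition \ref{prop:CL_minus_unit} to rule out odd units (so $O_\GAM=SO_\GAM$), and Lemma \ref{lem:condition_negative_det} for the anti-symplectic criterion — so it matches the paper's derivation. The detour through $S_X(-1)$ is unnecessary, since the paper's $\LAT_{k,l}=U(k)\OSUM\LL{2l}$ already allows $l=-n$ and $B_{n,-n}=B_{n,n}$, but it is harmless because the isometry groups and discriminant kernels of $L$ and $L(-1)$ coincide.
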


Set $\Gamma:=PSL_2(\Z)$ and $\Pi:=PGL_2(\Z)$.
For an integer $n\geq 1$, let $\Gamma(n)$
 denote the principal congruence subgroup of level $n$, 
 that is,
\begin{equation*} 
 \Pi(n) := \{ \alpha \in \Pi  \bigm| \alpha \equiv \pm I
 \bmod n \}, \quad
 \Gamma(n) := \Pi(n)\cap \Gamma.
\end{equation*}

\begin{example} \label{ex:wehler}
When $n=2$ in Proposition \ref{prop:auto-k3-Xn},
 we have Wehler K3 surfaces $X_2$ as described above,
 whose automorphism groups are isomorphic to
 $G_{2}=\Pi(2)$.
\end{example}

The subgroup $\Gamma_n := G_n \cap \Gamma \subseteq \Gamma$
 is a congruence subgroup 
 containing $\Gamma(n)$.
The quotient $\Gamma/\Gamma_n$
 is isomorphic to the product group
 $\prod_{q || n} PSL_2(\Z/q \Z)$,
 where $q=p^e$ runs over the prime power divisors with $q || n$,
 that is, $p$ is a prime 
 appearing exactly $e$ times in the prime factorization of $n$.
(See Section \ref{sect:lat_u_n_2n} for details.)
We have $G_1=\Pi$, 
 $[\Pi:G_2]=6$ and, for $n \geq 3$,
\begin{gather*}
 [\Pi : G_n]
 = \frac{[\Pi : \Gamma(n)]}{[G_n : \Gamma(n)]}
 = \frac{2 \cdot |\Gamma/\Gamma(n)|}{\delta_n}
 = \frac{1}{\delta_n} \times
 n^3 \prod_{p|n} \left( 1-\frac{1}{p^2} \right), \\
 \delta_n := |G_n/\Gamma(n)|
 = |\{ \bar{a} \in (\Z/n \Z)^\times \bigm|
  a^2 \equiv \pm 1 \bmod n \} / \{ \pm 1 \bmod{n} \}|,
\end{gather*}
 where the product is taken over
 the prime divisors $p$ of $n$.

\begin{example}
As a special case of Theorem \ref{prop:auto-k3-Xn},
 if $n$ is equal to a prime power $p^e$ with $e \geq 1$,
 then $\Aut(X_n) \cong G_n$ is isomorphic to $\Gamma(n)$
 or an overgroup of it of index $2$, as follows:
\begin{equation*}
 G_n =
 \begin{cases}
  \Pi(2) = \ANG{ \Gamma(2) , \SqMat{1}{0}{0}{-1} } & \quad (n=2) \\
  \Gamma(4) & \quad (n=4) \\
  \ANG{ \Gamma(2^e) ,%
   \SqMat{1+2^{e-1}}{2^e}{2^{2e-3}}{1-2^{e-1}+2^{2(e-1)}} }
   & \quad (n=2^e, ~ e \geq 3) \\
  \Gamma(p^e) & \quad (n=p^e, ~ p \equiv 3 \bmod 4) \\
  \ANG{ \Gamma(p^e) , \SqMat{a}{p^e}{(a^{2 p^{\scriptstyle e}}+1)/p^{e}}{a^{2 p^{\scriptstyle e} - 1}} }
   & \quad (n=p^e, ~ p \equiv 1 \bmod 4)
 \end{cases}
\end{equation*}
Here, in the last case,
 we chose an integer $a$ so that $a^2 \equiv -1 \bmod p^e$.
\end{example}

By a standard argument for modular groups
 (see
 Propositions \ref{prop:torsion-grp} and \ref{prop:free-group-12}),
 if a subgroup $G \subseteq \Pi$ of finite index is torsion-free,
 then $G$ is a free group of rank $\frac{1}{12}[\Pi:G]+1$.
In fact, $G_n$ is torsion-free if and only if $n \geq 3$.

\begin{example}
If $n=2^e$ with $e \geq 3$,
 we have $[\Pi:G_{2^{\scriptstyle e}}]=3 \cdot 2^{3e-3}$.
Thus
 $\Aut(X_{2^{\scriptstyle e}}) \cong G_{2^{\scriptstyle e}}$
 is a free group of rank $2^{3e-5} + 1$.
(See Example \ref{exm:2-power-gamma}.)
\end{example}

\begin{corollary}
A free group of arbitrarily large rank occurs as the automorphism group
 of a projective K3 surface with Picard number $3$.
In particular,
 the number of generators of the automorphism group of
 a projective K3 surface with Picard number $3$
 has no upper bound.
\end{corollary}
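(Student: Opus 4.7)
The plan is to exhibit the explicit sequence $(X_{2^e})_{e\geq 3}$ from the preceding example and show that the ranks of their (free) automorphism groups grow without bound.

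First I would address existence of the required K3 surfaces. The lattice $\LAT_{n,n}=U(n)\OSUM\LL{-2n}$ has signature $(1,2)$, and by Nikulin's theorem on primitive embeddings of even lattices into even unimodular lattices, it embeds primitively into the K3 lattice $\LatK$ of signature $(3,19)$; the rank $3$ is well within the range where such embeddings are unobstructed. Combined with the surjectivity of the period map for K3 surfaces, this produces, for every $n\geq 2$, a projective K3 surface $X_n$ whose Picard lattice is isometric to $\LAT_{n,n}$. Moreover, in the natural basis $(e,f,g)$ with $e^2=f^2=0$, $e\cdot f=n$, $g^2=-2n$, the self-intersection of $ae+bf+cg$ equals $2n(ab-c^2)$, which for $n\geq 2$ can never equal $-2$. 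Hence $X_n$ contains no $(-2)$-curves, so the hypotheses of Proposition~\ref{prop:auto-k3-Xn} are met and $\Aut(X_n)\cong G_n$.

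Next I would feed this into the preceding example. Specialising to $n=2^e$ with $e\geq 3$, that example asserts that $G_{2^e}$ is torsion-free with $[\Pi:G_{2^e}]=3\cdot 2^{3e-3}$, so by the standard rank formula for torsion-free finite-index subgroups of $\Pi=PGL_2(\Z)$ (Propositions~\ref{prop:torsion-grp} and~\ref{prop:free-group-12}) the group $G_{2^e}$ is free of rank
\[
  \tfrac{1}{12}[\Pi:G_{2^e}]+1 \;=\; 2^{3e-5}+1.
\]
Since this quantity tends to $\infty$ as $e\to\infty$, the sequence $\Aut(X_{2^e})$ realises free automorphism groups of arbitrarily large rank. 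The second assertion follows immediately, because the minimal number of generators of a free group equals its rank.

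The only nontrivial input is the existence step, and even this is routine once one cites Nikulin's embedding theorem together with the surjectivity of the period map; all the substantive work has already been carried out in the main theorem, Proposition~\ref{prop:auto-k3-Xn}, and the preceding example, so the corollary falls out by simply letting $e$ tend to infinity.
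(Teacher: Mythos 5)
Your proposal is correct and follows essentially the same route as the paper: existence via Theorem \ref{THM_Nikulin2} (primitive embedding plus surjectivity of the period map), the $(-2)$-vector check and Proposition \ref{prop:auto-k3-Xn} to identify $\Aut(X_{2^e})\cong G_{2^e}$, and the rank formula of Proposition \ref{prop:free-group-12} applied to the index $3\cdot 2^{3e-3}$ computed in the preceding example. Nothing is missing, and the arithmetic $\tfrac{1}{12}\cdot 3\cdot 2^{3e-3}+1=2^{3e-5}+1\to\infty$ is exactly what the paper relies on.
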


\subsection*{Structure of the paper}

We summarize the contents of this paper.
In Section \ref{sect:preliminaries}, we recall some results on
 lattices, K3 surfaces and
 principal congruence subgroups of modular groups.
In Section \ref{sect:cliff}, we prove Main Theorem.
In the next two sections, 
 we apply Main Theorem
 to lattices $U(k) \OSUM \LL{2l}$,
 and as a special case, $U(n) \OSUM \LL{2n}$.
In Section \ref{sect:salem},
 we compute Salem polynomials of automorphisms of
 Wehler K3 surfaces $X_2$.
The last two sections are appendices.

\subsection*{Acknowledgement}
We use Maxima \cite{maxima} for some computations in the paper.
We thank the developers of this excellent software.

\section{Preliminaries} \label{sect:preliminaries}

\subsection{Convention} \label{subsect:convention}
Throughout the paper,
 we use the same notation for elements (matrices) in
 general linear groups and their images in projective linear groups,
 for simplicity.
The product symbol $\prod_{p | n}$
 is taken over the positive prime divisors of $n$.
For example,
 $\prod_{p | n} p$ for a non-zero integer $n$ is
 the radical of $|n|$.
Tensor products over $\Z$ are denoted by subscript.
For example,
 a quarternion algebra $B$
 is mostly defined over $\Z$ in the paper
 and its extension of base ring by e.g.\ $\Q$ is denoted by
 $B\OverQ$.
(In addition, the unit group $(B\OverQ)^\times$
 of $B\OverQ$ is sometimes denoted by
 $B\OverQ^\times$ for simplicity.)
We use the following convention for lattices and K3 surfaces
 (see Sections \ref{subsect:lattices} and \ref{subsect:k3-basics}
 for details).
Lattices are always non-degenerate,
 unless otherwise stated.
K3 surfaces are defined over $\C$,
 which may not be projective.
However, the Picard lattice $S_X$
 of a K3 surface $X$
 is always supposed to be non-degenerate.
In addition, we assume that
 $X$ contains no $(-2)$-curves, that is,
 $\ROOT(S_X) = \varnothing$ (see (\ref{eq:root_L})).

\subsection{Lattices} \label{subsect:lattices}

A lattice
 is a free $\Z$-module $L$
 of finite rank equipped with a symmetric bilinear form
 $\langle ~,~ \rangle \colon L\times L\rightarrow \Z$.
If $\langle x,x\rangle \in 2\Z$ for any $x\in L$,
 a lattice $L$ is said to be even.
We sometimes fix a $\Z$-basis $(e_i)$ of $L$
 and identify the lattice $L$
 with its intersection matrix $Q_L := (\ANG{e_i,e_j})_{ij}$
 under this basis.
In particular,
 a lattice of rank one
 whose generator has self-intersection number $a$
 is represented as $\LL{a}$.
The discriminant $\disc(L)$ of $L$ is defined as $\det(Q_L)$,
 which is independent of the choice of a basis.
A lattice $L$ is called non-degenerate if
 $\disc(L)\neq 0$ and unimodular if $\disc(L)=\pm1$.
For a non-degenerate lattice $L$,
 the signature of $L$ is defined as $(s_+,s_-)$,
 where $s_{+}$ (resp.\ $s_-$)
 denotes the number of the positive (resp.\ negative) eigenvalues of
 $Q_L$.
A (self-)isometry of $L$ is an automorphism of the $\Z$-module $L$
 preserving the bilinear form.
The isometries of $L$ form the orthogonal group $O(L)$ of $L$.
The special orthogonal group $SO(L)$ of $L$
 consists of the isometries of $L$ of determinant $1$.
Let $K$ be a sublattice of a lattice $L$,
 that is, $K$ is a $\Z$-submodule of $L$
 equipped with the restriction of the bilinear form of $L$ to $K$.
If $L/K$ is torsion-free as a $\Z$-module,
 $K$ is said to be primitive.
For a ring $\mathbb{S}$ containing $\Z$,
 we equip $L_\mathbb{S} = L \otimes \mathbb{S}$
 with the $\mathbb{S}$-linear extension 
 of the bilinear form $\ANG{~,~}$ on $L$,
 denoted by the same symbol.
We set
\begin{equation} \label{eq:root_L}
 \ROOT(L) := \{ v \in L \bigm| \ANG{v,v}=-2 \}.
\end{equation}
The hyperbolic lattice is denoted by $U$:
\begin{equation}
 U = \begin{pmatrix}  0&1\\ 1&0 \end{pmatrix}.
\end{equation}

{\bf Orthogonal subgroups $O^+(L)$ and $SO^+(L)$.}
Let $L$ be a lattice with signature $(k,l)$.
In our convention,
 $O^+(L)$ is defined as the group
 consisting of $g \in O(L)$ such that
 the signs of $\det(g)^{m+k}$ and $\theta(g)$ coincide,
 where $m := (k+1)(l+1)$ and
 $\theta(g)$ denotes the spinor norm of $g$.
(See Section \ref{sect:def_cliff} for spinor norm.)
Alternatively, $O^+(L)$ is defined as follows.
Suppose that $k$ is odd; or, both $k$ and $l$ are even.
Then $m+k$ is odd.
Define the positive Grassmannian $Gr^+(k,L_\R)$ by
\begin{equation*}
 Gr^+(k,L_\R) :=
 \{ V \subseteq L_\R \bigm| V \cong \R^k, ~
 \ANG{~,~}|_V \gg 0 \},
\end{equation*}
 that is, $Gr^+(k,L_\R)$
 consists of $k$-dimensional subspaces $V \subseteq L_\R$
 such that $\ANG{~,~}$ is positive definite on $V$.
Then we define the set of orientation elements for $Gr^+(k,L_\R)$ by
\begin{equation*}
 \Omega :=
 \{ \omega \in \wedge^k L_\R \bigm|
 \omega \in \wedge^k V ~ (\exists V \in Gr^+(L_\R)), ~
 \omega \neq \mathbf{0} \}
 = \Omega^+ \sqcup (-\Omega^+),
\end{equation*}
 where $\Omega^+$ and $-\Omega^+$
 are connected components of $\Omega$.
Now set
\begin{equation} \label{O_and_SO}
 O^+(L) := \{ g \in O(L) \bigm|
 g(\CONE) = 
 \CONE \}, \quad
 SO^+(L) := O^+(L)\cap SO(L).
\end{equation}
When $k$ is even and $l$ is odd,
 we use a similar construction
 for the negative Grassmannian $Gr^{-}(l,L_\R)$.
In this convention, if either $k$ or $l$ is odd,
 we have
\begin{equation}
 O(L) = O^+(L) \times \{ \pm \id_L \}, \quad
 O^+(L) =
 \{ \OP{sgn}(\theta(g)) \cdot g \bigm| g \in SO(L)\},
\end{equation}
 where $\OP{sgn}(\theta(g)) \in \{ \pm 1 \}$
 denotes the sign of $\theta(g)$.

{\bf Discriminant group and form.}
For a lattice $L$,
 the discriminant group $A(L)$ of $L$ is defined by
\begin{equation} 
 A(L):=L^\vee/L, \quad
 L^\vee :=
 \{ x\in L\OverQ \bigm|
 \langle x,y \rangle \in \Z ~ (\forall y\in L) \}.
\end{equation}
We have $|A(L)|=|\disc(L)|$.
When $L$ is even, we define the discriminant form
 $q(L)$ of $L$ by
\begin{equation}
 q(L) \colon A(L)\rightarrow \Q/2 \Z; \quad
 x \bmod L \mapsto \ANG{x,x} \bmod 2 \Z.
\end{equation}
The group of automorphisms of $A(L)$
 preserving $q(L)$
 is denoted by $O(q(L))$.

{\bf Discriminant kernel and image.}
The discriminant kernel $G_\GAM$
 for a subgroup $G \subseteq O(L)$
 is defined by
\begin{equation} \label{eq:discriminant_kernel}
 G_\GAM :=
 \{ g \in G \bigm| \bar{g}=\id_{A(L)} \}
 = \OP{Ker}(G \rightarrow \Aut(A(L))),
\end{equation}
 where $\bar{g}$ denotes the automorphism of $A(L)$ induced by $g$.
In particular, when $L$ is even, we have
\begin{equation} \label{eq:def_OA}
 O_\GAM(L) =
 \OP{Ker}(O(L) \rightarrow O(q(L))).
\end{equation}
The discriminant image $G^\GAMu$ of $G$ is defined by
\begin{equation}
 G^\GAMu := \OP{Image}(G \rightarrow O(q(L)))
 \cong G/G_\GAM.
\end{equation}
We have a natural short exact sequence
\begin{equation}
 1 \rightarrow G_\GAM \rightarrow G 
 \rightarrow G^\GAMu \rightarrow 1.
\end{equation}

{\bf $\Q$-valued lattices.}
It is sometimes convenient to consider a $\Q$-valued lattice $L$,
 that is, we relax the definition of lattice so that
 $Q_L$ may have entries in $\Q$.
The concepts of $\disc(L)$ and $L^\vee$ 
 are valid for $\Q$-valued lattices as well.
For $\kappa \in \Q$,
 we mean by $L(\kappa)$
 the ($\Q$-valued) lattice defined as the $\Z$-module $L$
 equipped with $\kappa$ times the bilinear form of $L$,
 that is, $Q_{L(\kappa)}=\kappa \cdot Q_L$.


\begin{lemma} \label{lem:lattice_power}
Let $s$ be a non-zero integer and
 $L$ an even lattice of rank $n$.
We consider $L$ as
 $\Z^n$ (as column vectors) with intersection matrix $Q_L$
 and use the following identification:
\begin{equation*}
 O(L)=O(L(s))= \{ g\in GL_n(\Z) \bigm| g^T \cdot Q_L \cdot g=Q_L \}.
\end{equation*}
Then, for $g \in O(L)$, we have the following:
\begin{itemize}
\item[(1)]
 $g \in O_\GAM(L)$ if and only if
 $(g-I_n) \cdot Q_L^{-1}$ is an integer matrix, where
 $I_n$ is the identity matrix of size $n$;
\item[(2)]
 if $\frac{1}{s} Q_L$ is an integer matrix and $g \in O_\GAM(L)$,
 then $g^s \in O_\GAM(L(s))$.
\end{itemize}
\end{lemma}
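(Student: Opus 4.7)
The plan is to identify the dual lattice $L^\vee$ explicitly in matrix terms and thereby reduce both parts to entrywise integrality conditions on explicit matrices.

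For (1), I would first note that under the identification $L = \Z^n$ with the form $\ANG{x,y} = x^T Q_L y$, an element $x \in L\OverQ$ lies in $L^\vee$ if and only if $(Q_L x)^T y \in \Z$ for every $y \in \Z^n$, equivalently $Q_L x \in \Z^n$. Hence $L^\vee = Q_L^{-1}\Z^n$. Since every $g \in O(L)$ extends $\Q$-linearly to an isometry of $L\OverQ$ preserving $L^\vee$, the definition (\ref{eq:def_OA}) of $O_\GAM(L)$ tells us that $g$ belongs to $O_\GAM(L)$ exactly when the induced automorphism of $A(L) = L^\vee/L$ is trivial, i.e.\ $(g - I_n) L^\vee \subseteq L$. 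In our basis this is precisely the requirement that $(g - I_n) Q_L^{-1}$ sends $\Z^n$ into $\Z^n$, that is, has integer entries.

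For (2), I may assume $s > 0$, since the case $s < 0$ follows by applying the result for $|s|$ to $g^{-1} \in O_\GAM(L)$ and observing $O_\GAM(L(s)) = O_\GAM(L(-s))$. I would set $M := (g - I_n) Q_L^{-1}$, integer by (1), and $Q' := \tfrac{1}{s} Q_L$, integer by hypothesis, so that $g - I_n = M Q_L = s M Q'$. Since $I_n$ is central, the binomial expansion yields
\begin{equation*}
 g^s - I_n
 = \sum_{k=1}^{s} \binom{s}{k}(g - I_n)^k
 = \sum_{k=1}^{s} \binom{s}{k}\, s^k (M Q')^k.
\end{equation*}
The lattice $L(s)$ has intersection matrix $sQ_L$, so $(sQ_L)^{-1} = \tfrac{1}{s} Q_L^{-1}$; using the identity $(M Q_L)^k Q_L^{-1} = s^{k-1}(M Q')^{k-1} M$, one obtains
\begin{equation*}
 (g^s - I_n)(sQ_L)^{-1}
 = \sum_{k=1}^{s} \binom{s}{k}\, s^{k-2}(M Q')^{k-1} M.
\end{equation*}
Each summand is an integer matrix: for $k = 1$ the expression collapses to $M$, while for $k \geq 2$ the factor $s^{k-2}$ is a non-negative integer power. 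Applying part (1) to the lattice $L(s)$ then yields $g^s \in O_\GAM(L(s))$.

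There is no serious obstacle; both assertions reduce to manipulations of explicit integer matrices once $L^\vee = Q_L^{-1}\Z^n$ is in place. The only place needing attention will be the bookkeeping in (2): the $\tfrac{1}{s}$ arising from inverting $sQ_L$ must be exactly absorbed by the single factor of $s$ contributed by the $k=1$ binomial term $\binom{s}{1}(g - I_n) = s M Q_L$, while higher-order terms contribute strictly more factors of $s$ and therefore remain integral after division by $s$.
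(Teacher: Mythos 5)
Your proof is correct and follows essentially the same route as the paper's: part (1) is in both cases the observation that $L^\vee = Q_L^{-1}\Z^n$, and part (2) in both cases reduces, via part (1) applied to $L(s)$, to the integrality of $(g^s - I_n)(sQ_L)^{-1}$ using that $g - I_n$ equals $s$ times an integer matrix --- the paper factors $g^s - I_n$ as $(g^{s-1}+\cdots+I_n)(g-I_n)$ and notes $g\equiv I_n \bmod s$, where you instead expand $(I_n+(g-I_n))^s$ binomially, an immaterial difference. Your explicit reduction of the case $s<0$ to $s>0$ (via $g^{-1}$ and $O_\GAM(L(s))=O_\GAM(L(-s))$) is a small point of care that the paper's proof passes over silently, since its geometric-sum factorization likewise tacitly assumes $s\geq 1$.
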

\begin{proof}
The assertion (1) follows from the fact that $L^\vee$
 is generated by the columns of $Q_L^{-1}$.
Under the assumption of (2),
 we find that
 $[(g-I_n) \cdot Q_L^{-1}] \cdot \frac{1}{s} Q_L$
 is an integer matrix by (1),
 which implies $g \equiv I_n \bmod s$.
Thus
\begin{equation*}
 (g^s-I_n) \cdot Q_{L(s)}^{-1}
 =[ \frac{1}{s}(g^{s-1}+g^{s-2}+\cdots+I_n) ]
 \cdot [ (g-I_n) \cdot Q_L^{-1} ]
\end{equation*}
 is an integer matrix, and hence $g^s \in O_\GAM(L(s))$.
\end{proof}

As a consequence of
 Strong Approximation Theorem for quadratic forms
 (see e.g.\ \cite{O}),
 Proposition \ref{prop:strong-approx} (below)
 describes discriminant images $G^\GAMu$
 in terms of the spinor norm map $\theta$.
(See Section \ref{sect:def_cliff} for spinor norm.)
Before stating Proposition \ref{prop:strong-approx},
 we introduce some objects.
Define
\begin{equation}
 f := \det \times \, \theta \colon
 O(L)\rightarrow
 \{ \pm 1 \} \times \SQ{\Q}, \quad
 \Theta(L) := \OP{Ker}(f).
\end{equation}
For each prime $p$, set $L_p := L \otimes \Z_p$ and define
\begin{equation}
 f_p := \det \times \, \theta_p \colon
 O(L_p)\rightarrow
 \{ \pm 1 \} \times \SQ{\Q_p}, \quad
 \Theta(L_p) := \OP{Ker}(f_p).
\end{equation}
Here the spinor norm
 $\theta_p(g_p)\in\Q_p^\times/(\Q_p^\times)^2$
 of $g_p\in O(L_{p})$
 is defined in a similar way.
The codomain of $f_p$
 is isomorphic to a $2$-elementary abelian group
 $(\Z/2 \Z)^{r}$
 with $r=4$ if $p=2$ and $r=3$ otherwise.
Discriminant kernels for $L_p$
 are defined and represented similarly:
\begin{equation}
 O_\GAM(L_p)=\OP{Ker} \bigl( O_\GAM(L_p)\rightarrow
 O(q(L_p)) \bigr), \quad
 \Theta_\GAM(L_p) = \Theta(L_p) \cap O_\GAM(L_p).
\end{equation}
Since $q(L)$ is uniquely written
 as the orthogonal sum of the $q(L_p)$,
 we have a natural isomorphism
\begin{equation}
 O(q(L))=\prod_{p|d} O(q(L_p))
 \cong \prod_{p|d}
 \frac{O(L_p)}{O_\GAM(L_p)}, 
 \quad d := \disc(L).
\end{equation}
Here we use the fact that
 the natural map $O(L_p) \rightarrow O(q(L_p))$
 is surjective in general \cite[Corollary 1.9.6]{N}.

Applying Strong Approximation Theorem for quadratic forms,
 one can determine discriminant images.
The following is a consequence of the results in
 \cite[Chapter VIII]{mm}.

\begin{proposition}[cf.\ {\cite[Theorem 5.1]{mm}}]
\label{prop:strong-approx}
In the same setting as above,
 let $L$ be an indefinite even lattice of rank $\geq 3$.
Let $G$ be a subgroup of $O(L)$ containing $\Theta(L)$.
(For example, one can take $G=SO^+(L)$.)
We have the following natural commutative diagram:
\begin{equation} \label{eq:com-diagram}
\begin{array}{ccccccccc}
 1 & \rightarrow & \Theta(L) & \rightarrow & G 
 & {\displaystyle \mathop{\rightarrow}^{\displaystyle f}}
 & f(G) 
 & \rightarrow & 1
 \\
 && \ \ \, \downarrow \alpha && \ \ \, \downarrow \beta &
 & \ \ \, \downarrow \gamma &&
 \\
 1 & \rightarrow
 & \displaystyle \prod_{p|d}
 \frac{\Theta(L_p)}{\Theta_\GAM(L_p)}
 & \rightarrow
 & \displaystyle \prod_{p|d}
 \frac{O(L_p)}{O_\GAM(L_p)}
 & \rightarrow
 & \displaystyle \prod_{p|d}
 \frac{f_p(O(L_p))}{f_p(O_\GAM(L_p))}
 & \rightarrow & 1
\end{array}
\end{equation}
Here each product is taken over the (positive) prime divisors of
 $d := \disc(L)$.
In this diagram, 
 (1) the map $\alpha$ is surjective; and
 (2) the both rows 
 are exact.
Therefore, by diagram chasing, one can check that
 the discriminant image $G^\GAMu \cong \OP{Image}(\beta)$
 is a normal subgroup of $O(q(L))$, that is,
\begin{equation}
 G^\GAMu = \OP{Image}(G \rightarrow O(q(L)))
 \unlhd O(q(L)).
\end{equation}
Moreover, we have a natural isomorphism
\begin{equation}
 \OP{Coker}(G \rightarrow O(q(L))) =
 {O(q(L))} / {G^\GAMu}
 \cong \OP{Coker}(\beta)
 \cong \OP{Coker}(\gamma).
\end{equation}
(By the construction,
 $\OP{Coker}(\gamma)$ is a $2$-elementary abelian group.)
In particular, the surjectivity of $\gamma$ implies
 $G^\GAMu=O(q(L))$.
\end{proposition}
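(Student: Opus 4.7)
The plan is to verify the two concrete claims---exactness of both rows and surjectivity of $\alpha$---and then to deduce the remaining assertions about $G^\GAMu$ by a formal diagram chase that exploits the fact that the right-hand column is $2$-elementary abelian.

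First I would dispose of the exactness statements. The top row is exact by construction: since $\Theta(L) \subseteq G$ by hypothesis and $\Theta(L) = \OP{Ker}(f)$, it is the kernel of $f|_G$, and surjectivity onto $f(G)$ is tautological. For the bottom row I would work one prime at a time. For each $p \mid d$, the short exact sequence $1 \to \Theta(L_p) \to O(L_p) \xrightarrow{f_p} f_p(O(L_p)) \to 1$ and its restriction with $O(L_p)$ replaced by $O_\GAM(L_p)$ fit into a three-by-three diagram; the induced sequence of quotients is readily verified to be exact (either by the snake lemma or a direct three-step check), yielding the $p$th factor of the bottom row. Taking the product over $p \mid d$ preserves exactness because all groups in sight are finite.

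The main step is the surjectivity of $\alpha$, and it is where Strong Approximation enters. Each subgroup $O_\GAM(L_p) \subseteq O(L_p)$ is open in the $p$-adic topology, since it contains an evident congruence subgroup. Because $L$ is indefinite of rank $\geq 3$, Strong Approximation for the spin group of $L_\Q$ gives density of $\OP{Spin}(L_\Q)$ in the restricted adelic product over the finite places. Translating this through the description of $\Theta(L)$ and of the $\Theta(L_p)$ as kernels of $\det \times \theta$, for any choice of representatives $(g_p)_{p \mid d}$ with $g_p \in \Theta(L_p)$ there exists an element of $\Theta(L) \subseteq G$ whose image in $O(L_p)/O_\GAM(L_p)$ agrees with that of $g_p$ for every $p \mid d$. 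This element provides a preimage under $\alpha$, proving surjectivity. I expect this to be the main obstacle of the proof: the other steps are largely formal bookkeeping, whereas this one depends essentially on both the indefiniteness and the rank $\geq 3$ hypothesis.

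With both claims in hand, the remaining assertions follow by diagram chase. By the surjectivity of $\alpha$ together with the inclusion $\Theta(L) \subseteq G$, the image $\OP{Image}(\beta) \subseteq O(q(L))$ contains $\prod_{p \mid d} \Theta(L_p)/\Theta_\GAM(L_p)$, which by exactness of the bottom row is exactly the kernel of the right-hand projection $\pi \colon O(q(L)) \twoheadrightarrow \prod_{p \mid d} f_p(O(L_p))/f_p(O_\GAM(L_p))$; commutativity then forces $\OP{Image}(\beta) = \pi^{-1}(\OP{Image}(\gamma))$. Because the target of $\pi$ is $2$-elementary abelian, every subgroup of it is normal and hence $\pi^{-1}(\OP{Image}(\gamma))$ is normal in $O(q(L))$, giving $G^\GAMu \unlhd O(q(L))$. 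The same preimage description furnishes the canonical isomorphism $\OP{Coker}(\beta) \cong \OP{Coker}(\gamma)$, completing the proof.
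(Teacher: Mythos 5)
Your proposal is correct and follows essentially the same route as the paper: Strong Approximation (for the spinor kernel of the indefinite rank $\geq 3$ form) gives surjectivity of $\alpha$, a nine-lemma/snake-lemma argument applied prime by prime gives exactness of the bottom row, and the remaining claims follow by the diagram chase. Your write-up of the chase via $\OP{Image}(\beta)=\pi^{-1}(\OP{Image}(\gamma))$ and the $2$-elementary abelian target is exactly the argument the paper leaves implicit.
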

\begin{proof}
By Strong Approximation Theorem for quadratic forms
 (see e.g.\ \cite{O}), 
 the map $\alpha$ is surjective.
In order to verify the exactness of
 the second row of (\ref{eq:com-diagram}),
 we use the following commutative diagram:
\begin{equation*}
\begin{array}{ccccccccc}
 1 & \rightarrow & \Theta_\GAM(L_p) & \rightarrow & O_\GAM(L_p)
 & \rightarrow
 & f_p(O_\GAM(L_p))
 & \rightarrow & 1
 \\
 && ~ \downarrow && ~ \downarrow &
 & ~ \downarrow &&
 \\
 1 & \rightarrow & \Theta(L_p) & \rightarrow & O(L_p)
 & \rightarrow
 & f_p(O(L_p))
 & \rightarrow & 1
\end{array}
\end{equation*}
The both rows of this diagram are exact,
 which implies the required exactness
 (by applying the nine lemma).
\end{proof}

\begin{remark}
In Proposition \ref{prop:strong-approx},
 the assumption of $L$ being indefinite is essential.
For example, if $L$ is isomorphic to the root lattice $D_n$
 of rank $n$, where $n \equiv 4 \bmod{8}$ and $n \geq 12$,
 then it is known that $O^\GAMu(L)$ 
 is a cyclic group of order $2$.
On the other hand, we have
\begin{equation*}
 A(L) \cong (\Z/2 \Z)^{\oplus 2}, \quad
 q(L) \cong \SqMat{1}{1/2}{1/2}{1}, \quad
 O(q(L)) \cong S_3.
\end{equation*}
Here $S_3$ denotes the symmetric group of degree $3$.
\end{remark}

\subsection{K3 surfaces} \label{subsect:k3-basics}

A compact complex surface $X$
 is called a K3 surface if it is simply connected and
 has a nowhere vanishing holomorphic $2$-from $\omega_X$
 (see e.g.\ \cite{BHPV} for details).
We consider the second integral cohomology $H^2(X,\Z)$
 with the cup product as a lattice.
It is known that $H^2(X,\Z)$ is an even unimodular lattice
 of signature $(3,19)$.
Such a lattice is unique up to isomorphism (of abstract lattices)
 and is called the K3 lattice.
We fix such a lattice and write it as $\LatK$.
The Picard lattice $S_X$
 and transcendental lattice $T_X$ of $X$ are defined by
\begin{align}
 S_X & {} := \{ x \in H^2(X,\Z) \bigm| \langle x,\omega_X \rangle=0 \}, \\
 T_X & {} := (S_X)^\bot
  = \{ y \in H^2(X,\Z) \bigm|
  \langle x,y \rangle=0~(\forall x \in S_X) \}.
\end{align}
Here $\omega_X$ is considered as an element in $H^2(X,\C)$
 and the bilinear form on $H^2(X,\Z)$ is extended to that on
 $H^2(X,\C)$ linearly.
The Picard group
 of $X$
 is naturally isomorphic to $S_X$.
It is known that $X$ is projective if and only if
 $S_X$ is non-degenerate and has signature $(1,\rho-1)$,
 where $\rho=\rank\, S_X$ is the Picard number of $X$.

Let $X$ be a projective K3 surface.
Since $H^2(X,\Z)$ is unimodular, 
 we have the following natural isomorphisms:
\begin{equation} \label{eq:Sx-Tx}
 A(S_X)\cong A(T_X)\cong H^2(X,\Z)/(S_X \OSUM T_X)
\end{equation}
 (see \cite{N} for details).
By the global Torelli theorem for K3 surfaces \cite{BR,PS},
 the following map is injective:
\begin{equation} \label{EQ_map_torelli}
 \Aut(X) \rightarrow O(S_X) \times O(T_X); \quad
 \varphi \mapsto
 (g,h):=(\varphi^*|_{S_X},\varphi^*|_{T_X}).
\end{equation}
Moreover,
 $(g,h) \in O(S_X) \times O(T_X)$
 is the image of some $\varphi \in \Aut(X)$
 by the map (\ref{EQ_map_torelli}) if and only if
 (1) the linear extension of $g$ (resp.\ $h$)
 preserves the ample cone 
 of $X$ (resp.\ $H^{2,0}(X) = \C\omega_X$)
 and (2) the actions of $g$ and $h$ on $A(S_X) \cong A(T_X)$
 (via (\ref{eq:Sx-Tx})) coincide.

An automorphism $\varphi$ of a K3 surface $X$
 is said to be {\it symplectic}
 if $\varphi^* \omega_X=\omega_X$,
 and {\it anti-symplectic}
 if $\varphi^* \omega_X=-\omega_X$.
These conditions are equivalent to
 $\varphi^*|_{T_X}=\id_{T_X}$ and $\varphi^*|_{T_X}=-\id_{T_X}$,
 respectively
 \cite[Theorem 3.1]{Nfin}.

\begin{remark}
It may be worth mentioning the following application of
 Proposition \ref{prop:strong-approx}
 to automorphisms of K3 surfaces.
Let $X$ be a projective K3 surface with Picard number $\geq 3$
 and $\ROOT(S_X)=\varnothing$.
Suppose that $h \in O(T_X)$ preserves $H^{2,0}(X)$.
By Proposition \ref{prop:strong-approx},
 the image of $h^2$ in $O(q(T_X))$ 
 coincides with that of some $g \in \Theta(S_X)$
 under the natural isomorphism $A(S_X) \cong A(T_X)$.
By the global Torelli theorem for K3 surfaces,
 $g \oplus h^2$ is induced by an automorphism of $X$.
For a similar statement for Picard number $2$,
 see \cite{HKL}.
\end{remark}

\begin{theorem} \label{THM_Nikulin2}
Let $\rho$ be a positive integer with $\rho\leq 10$.
Then any even lattice $S$
 of signature $(1,\rho-1)$ or $(0,\rho)$
 occurs as the Picard lattice $S_X$ of some K3 surface $X$.
\end{theorem}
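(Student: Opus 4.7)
The plan is to combine Nikulin's theorem on primitive embeddings into even unimodular lattices with the surjectivity of the period map for K3 surfaces. The result is classical and essentially due to Nikulin, so the task is really to assemble the pieces.

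First, I would show that $S$ admits a primitive embedding into the K3 lattice $\LatK$. Nikulin's criterion (Theorem 1.12.2 / Corollary 1.12.3 in \cite{N}) guarantees a primitive embedding of an even lattice $S$ of signature $(s_+,s_-)$ into the even unimodular lattice of signature $(3,19)$ provided $s_+\leq 3$, $s_-\leq 19$, and $l(A(S)) \leq 22-s_+-s_-$, where $l(A(S))$ is the minimal number of generators of the discriminant group $A(S)$. In our situation $\rho=s_++s_-\leq 10$, so $22-s_+-s_-\geq 12$, while $l(A(S))\leq \rank(S)=\rho\leq 10<12$, and the signature conditions $s_+\in\{0,1\}\leq 3$, $s_-\leq \rho\leq 10\leq 19$ are trivial. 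Hence such a primitive embedding $S\hookrightarrow \LatK$ exists.

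Next, fix such an embedding and set $T := S^\bot \subset \LatK$, an even lattice of signature $(2,20-\rho)$ in the case $\sigma(S)=(1,\rho-1)$ and $(3,19-\rho)$ in the case $\sigma(S)=(0,\rho)$. In both cases the period domain
\begin{equation*}
 D_T := \{ \omega \in \BP(T_\C) \bigm|
 \ANG{\omega,\omega}=0, ~
 \ANG{\omega,\bar\omega}>0 \}
\end{equation*}
is a non-empty complex manifold. I would then choose $\omega \in D_T$ very general in the following sense: for each $v \in \LatK \setminus S$, the condition $\ANG{v,\omega}=0$ cuts out a proper analytic subset of $D_T$; taking the complement of the countable union of these subsets yields a non-empty (in fact dense) set of $\omega$'s with $\omega^\bot \cap \LatK = S$.

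Finally, by the surjectivity of the period map for (not necessarily projective) K3 surfaces (see e.g.\ \cite{BHPV}), there exist a K3 surface $X$ and a marking $\varphi \colon H^2(X,\Z) \CONGa \LatK$ such that $\varphi_\C(\C \omega_X)=\C \omega$. Under $\varphi$, the Picard lattice $S_X=\omega_X^\bot\cap H^2(X,\Z)$ is identified with $\omega^\bot\cap \LatK=S$, so $S_X\cong S$. The case $\sigma(S)=(1,\rho-1)$ yields a projective K3 by the criterion recalled in Section \ref{subsect:k3-basics}, while $\sigma(S)=(0,\rho)$ yields a non-projective one. The only real substance is the lattice-theoretic input of Nikulin's embedding criterion; granted that, the analytic step via the surjectivity of the period map together with the genericity argument is routine.
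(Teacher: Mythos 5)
Your proof is correct and follows essentially the same route as the paper: a primitive embedding $S\hookrightarrow\LatK$ via Nikulin's embedding criterion, followed by the surjectivity of the period map. The only difference is that you spell out the genericity argument ensuring $\omega^\bot\cap\LatK=S$ exactly, which the paper leaves implicit, and you cite Nikulin's Theorem 1.12.2/Corollary 1.12.3 where the paper cites Theorem 1.14.4 (which also gives uniqueness); both apply under the hypothesis $\rho\leq 10$.
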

\begin{proof}
The K3 lattice $\LatK$
 contains a primitive sublattice isomorphic to $S$
 \cite[Theorem 1.14.4]{N}.
(In fact, such a sublattice is unique up to $O(\LatK)$.)
The surjectivity of the period map for K3 surfaces \cite{Tod}
 implies the existence of a K3 surface $X$
 and an isomorphism $H^2(X,\Z) \cong \LatK$
 mapping $S_X$ to $S$.
\end{proof}

\begin{proposition} \label{prop:odd_pic_num}
Let $X$ be a projective K3 surface with odd Picard number $\rho$.
Then any automorphism of $X$ is symplectic or anti-symplectic.
Moreover,
 if $\rho \geq 3$ and $\ROOT(S_X) = \varnothing$,
 we have a (group) isomorphism
\begin{equation} \label{eq:mapping_SX}
 \Aut(X) \cong
  \{ g \in O^+(S_X) \bigm| \bar{g} = \pm \id
  \ \, \text{\rm in} \ \, O(q(S_X)) \} =: {} H; \quad
 \varphi \mapsto (\varphi^{-1})^*|_{S_X}.
\end{equation}
In particular, any non-trivial automorphism of $X$
 acts on $S_X$ non-trivially
 (equivalently, $S_X$ is not a $2$-elementary abelian group).
The group of symplectic automorphisms corresponds to
 $O^+_\GAM(S_X)$.
We also have a natural isomorphism $H \cong O_\GAM(S_X)$.
\end{proposition}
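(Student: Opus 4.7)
The plan is to combine the global Torelli theorem for K3 surfaces with Hodge-theoretic rigidity of the transcendental lattice $T_X$, which under our hypothesis has odd rank $22-\rho$.

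\emph{Step 1 (every automorphism is symplectic or anti-symplectic).} I would study the character $\chi \colon \Aut(X) \to GL(H^{2,0}(X))$, $\varphi \mapsto \varphi^*\omega_X/\omega_X$. Positivity of $\langle \omega_X, \bar\omega_X \rangle$ forces $|\chi(\varphi)|=1$, and arithmeticity of $O(T_X)$ (the stabilizer in $O(T_X \otimes \R)$ of the positive $2$-plane $(\C \omega_X + \C\bar\omega_X) \cap T_X \otimes \R$ is compact, so meets the arithmetic group $O(T_X)$ in a finite subgroup) forces the image $\chi(\Aut(X))$ to be a finite cyclic group of some order $m$. By Nikulin's theorem, $T_X \otimes \Q$ is then a free $\Q(\zeta_m)$-module, so $\varphi(m) \mid \rank T_X = 22-\rho$; since this is odd, $\varphi(m)=1$, i.e., $m\in\{1,2\}$.

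\emph{Step 2 (the isomorphism).} I would define $\Phi(\varphi) := (\varphi^{-1})^*|_{S_X}$. For $\varphi \in \Aut(X)$ write $g := \Phi(\varphi)$ and $h := (\varphi^{-1})^*|_{T_X}$. By Step~1, $h = \pm \id_{T_X}$, hence $\bar h = \pm \id$ on $A(T_X)$; Torelli's compatibility through the glueing $A(S_X) \cong A(T_X)$ from (\ref{eq:Sx-Tx}) then forces $\bar g = \pm \id$. Since $\ROOT(S_X) = \varnothing$, the ample cone of $X$ is a connected component of the positive cone of $S_X$, so $g$ preserving the ample cone places it in $O^+(S_X)$; hence $g \in H$. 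For surjectivity, given $g \in H$ with $\bar g = \epsilon \id$ ($\epsilon \in \{\pm 1\}$), I would pair it with $h := \epsilon \id_{T_X}$: the Hodge condition ($\pm\id$ preserves $\C\omega_X$), the ample-cone condition, and the discriminant match all hold, so Torelli produces a pre-image $\varphi$.

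\emph{Step 3 (injectivity and remaining claims).} If $g = \id_{S_X}$ then $h = \pm \id$, and the discriminant match forces $h = \id$ (whence $\varphi = \id$ by Torelli) unless $-\id_{T_X}$ acts trivially on $A(T_X)$, i.e., unless $A(S_X) \cong A(T_X)$ is $2$-elementary. Finally, symplectic automorphisms are those with $h = \id$, equivalently $\bar g = \id$, giving the identification with $O^+_\GAM(S_X)$; and for $H \cong O_\GAM(S_X)$, since $S_X$ has signature $(1, \rho-1)$ with $k=1$ odd, the preliminaries give $O(S_X) = O^+(S_X) \times \{\pm \id_{S_X}\}$, and the rule $g \mapsto g$ (when $\bar g = \id$) or $g \mapsto -g$ (when $\bar g = -\id$) is a group isomorphism $H \to O_\GAM(S_X)$.

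\emph{Main obstacle.} The delicate point is the injectivity of $\Phi$, which as the parenthetical in the statement makes explicit, is equivalent to $S_X$ not being $2$-elementary. I expect to settle this by invoking Nikulin's classification of even $2$-elementary hyperbolic lattices and verifying that any such lattice of odd rank $\rho \geq 3$ and signature $(1, \rho-1)$ represents $-2$, contradicting $\ROOT(S_X) = \varnothing$.
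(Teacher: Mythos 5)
Your proposal is correct and follows essentially the same route as the paper: the symplectic/anti-symplectic dichotomy deduced from the oddness of $\rank\, T_X$ (which is exactly the content of the result the paper cites as \cite[Theorem 3.1]{Nfin}), the global Torelli theorem for the bijectivity of $\varphi \mapsto (\varphi^{-1})^*|_{S_X}$ using that $\ROOT(S_X)=\varnothing$ makes the ample cone a full component of the positive cone, and a reduction of injectivity to Nikulin's classification \cite{Ninvo} to exclude an anti-symplectic involution acting trivially on a (necessarily $2$-elementary) $S_X$. The only difference is that you unfold the proofs of the results the paper merely cites, and your final correspondence $g \mapsto \varepsilon_g\, g$ giving $H \cong O_\GAM(S_X)$ coincides with the paper's.
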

\begin{proof}
Since the Picard number of $X$ is odd,
 any $\varphi \in \Aut(X)$ is symplectic or anti-symplectic
 \cite[Theorem 3.1]{Nfin}.
Assume $\rho \geq 3$ and $\ROOT(S_X) = \varnothing$.
If $\varphi \neq \id$ acts on $S_X$ trivially,
 then $\varphi$ is an anti-symplectic involution and
 we should have $\ROOT(S_X) \neq \varnothing$
 by the classification of anti-symplectic involutions on K3 surfaces
 \cite{Ninvo}.
(In particular, $\id \neq - \id$ in $O(q(S_X))$.)
Hence the map (\ref{eq:mapping_SX}) is an isomorphism
 by the global Torelli theorem for K3 surfaces.
For $g \in H$,
 let $\varepsilon_g= 1$ or $-1$
 according to whether $\bar{g}=\id$ or $-\id$.
Then the correspondence
 $g \leftrightarrow \varepsilon_g \cdot g$
 gives an isomorphism $H \cong O_\GAM(S_X)$.
\end{proof}

\begin{proposition}
Let $X$ be a projective K3 surface with odd Picard number $\leq 7$.
If $g \in Aut(X)$ has finite order,
 then $g$ is an anti-symplectic involution.
\end{proposition}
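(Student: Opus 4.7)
The plan is to rule out any non-trivial symplectic automorphism of finite order when $\rho \leq 7$, and then reduce the anti-symplectic case to squaring.

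By Proposition \ref{prop:odd_pic_num}, since $\rho$ is odd, every $g \in \Aut(X)$ is either symplectic or anti-symplectic. Suppose first that $g$ has finite order and is symplectic, with $g \neq \id$. I would invoke Nikulin's classification of symplectic automorphisms of finite order on K3 surfaces \cite{Nfin}: the order $n$ of $g$ lies in $\{2,3,4,5,6,7,8\}$, and in each case the co-invariant lattice $(H^2(X,\Z)^g)^\perp$ has rank at least $8$ (with ranks $8,12,14,16,16,18,18$, respectively, depending on $n$). Since $g^* \omega_X = \omega_X$, the transcendental lattice $T_X$ is contained in the invariant lattice $H^2(X,\Z)^g$. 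Therefore
\begin{equation*}
 \rank T_X \leq 22 - \rank (H^2(X,\Z)^g)^\perp \leq 22 - 8 = 14,
\end{equation*}
and so $\rho = 22 - \rank T_X \geq 8$, contradicting $\rho \leq 7$. Hence no non-trivial symplectic automorphism of finite order exists on such an $X$.

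Next, suppose $g$ is anti-symplectic and has finite order. Then $g^* \omega_X = -\omega_X$ gives $(g^2)^* \omega_X = \omega_X$, so $g^2$ is a symplectic automorphism of finite order. By the previous step, $g^2 = \id$. Since an anti-symplectic automorphism is not the identity, $g$ has order exactly $2$, i.e., $g$ is an anti-symplectic involution.

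The only substantive ingredient is Nikulin's classification; once it is granted, the argument is elementary. The main obstacle, if one wished to be self-contained, would be re-proving the rank bound on the co-invariant lattice: for $g$ symplectic of prime order $p$, this can be deduced from the holomorphic Lefschetz fixed point formula applied to the action on $H^0(X, \Omega_X^2)$ combined with the fact that all fixed points are isolated, but quoting \cite{Nfin} is considerably cleaner.
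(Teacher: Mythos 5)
Your proof is correct and follows essentially the same route as the paper: the paper likewise combines the symplectic/anti-symplectic dichotomy of Proposition \ref{prop:odd_pic_num} with Nikulin's fact that a non-trivial symplectic automorphism of finite order forces Picard number at least $8$, the squaring step being left implicit. Your version merely spells out the co-invariant lattice ranks behind Nikulin's bound and the reduction of the anti-symplectic case to an involution.
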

\begin{proof}
In general, if a K3 surface admits
 a non-trivial symplectic automorphism of finite order,
 its Picard number should be at least $8$
 (see \cite{Nfin}).
The assertion of the proposition follows from
 Proposition \ref{prop:odd_pic_num}.
\end{proof}

\subsection{Congruence subgroups} \label{subsect:cong}

We recall some basic facts on congruence subgroups
 (see e.g.\ \cite{JT,K}).
Let $\Pi$ denote the extended modular group,
 and $\Gamma$ the modular group, a subgroup of index $2$ in $\Pi$:
\begin{equation}
 \Pi := PGL_2(\Z) = GL_2(\Z)/\{\pm I \}, \quad
 \Gamma := PSL_2(\Z) = SL_2(\Z)/\{\pm I \}.
\end{equation}
For each integer $n \geq 1$,
 the principal congruence subgroup $\Gamma(n)$ of level $n$,
 which is a normal subgroup of finite index in $\Gamma$,
 is defined by
\begin{equation}
\Pi(n) := \{ \alpha \in \Pi  \bigm| \alpha \equiv \pm I
 \bmod n \}, \quad
 \Gamma(n) := \Pi(n)\cap \Gamma.
\end{equation}
It follows that
 $\Gamma/\Gamma(n) \cong SL(2,\Z/n \Z) / \{ \pm I \bmod{n} \}$.
We have
\begin{equation}
 \Pi(1)=\Pi, \quad \Gamma(1)=\Gamma; \quad
 [\Pi(2):\Gamma(2)]=2, \quad
 [\Gamma:\Gamma(2)]=6;
\end{equation} 
 and, for $n\geq 3$,
\begin{equation}
 \Pi(n)=\Gamma(n), \quad
 [\Gamma:\Gamma(n)]=\frac{n^3}{2}\prod_{p|n}(1-\frac{1}{p^2}).
\end{equation} 
A subgroup $G$ of $\Gamma$ is called a congruence subgroup
 if there exists $n$ such that it contains $\Gamma(n)$.
The level of $G$ is then the smallest such $n$.

\begin{proposition}[{cf.\ \cite[Lemma 3.2]{SIKpower}}] \label{prop:torsion-grp}
Suppose that $P \unlhd \Pi$ has finite index
 and contains a torsion element
 (i.e.\ a non-trivial element of finite order).
Then $P$ contains one of the following three subgroups
 (of indices $6$, $4$ and $6$, respectively):
\begin{equation*}
 \Pi(2)=\ANG{\Gamma(2),(\SqMatSmall{1}{0}{0}{-1})}, \quad
 \ANG{\Gamma(2),(\SqMatSmall{1}{1}{-1}{0})}
 \quad \text{and} \quad
 \ANG{\Gamma(3),(\SqMatSmall{0}{1}{-1}{0}),(\SqMatSmall{1}{2}{-1}{-1})}.
\end{equation*}
(Here the second and third subgroups are equal to $\Gamma^2$ and $\Gamma^3$,
 respectively,
 where $\Gamma^m$ denotes the $m$th power subgroup of $\Gamma$.)
\end{proposition}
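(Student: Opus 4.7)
The plan is to reduce the proposition to a case analysis on the $\Pi$-conjugacy class of a torsion element $g\in P$. Since $P$ is normal, the normal closure $\ANG{g}^\Pi$ lies in $P$, so it suffices to check that each such closure contains one of the three listed subgroups. The first step is to observe that any torsion element of $\Pi=PGL_2(\Z)$ has order $2$ or $3$: a lift $M\in GL_2(\Z)$ satisfies $M^n=\pm I$, so its eigenvalues are roots of unity, and an inspection of the possible integral degree-$2$ characteristic polynomials shows that in fact $M^2=\pm I$ or $M^3=\pm I$.

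Next I would classify the $\Pi$-conjugacy classes of torsion. Order $3$ gives a single $\Pi$-class (lifts have $\tr=\pm1$, $\det=1$; the two $\Gamma$-conjugacy classes coming from $\Z[\zeta_3]$ merge under a $\det=-1$ conjugation), represented by $r_3=(\SqMatSmall{1}{1}{-1}{0})$. Order $2$ inside $\Gamma$ again gives a single $\Pi$-class (lifts have $\tr=0$, $\det=1$), represented by $r_2=(\SqMatSmall{0}{1}{-1}{0})$. Order $2$ outside $\Gamma$, where lifts have $\tr=0$, $\det=-1$ and so $M^2=I$, splits into two $\Pi$-classes: writing $L^\pm$ for the $\pm1$-eigenlattices of $M$ in $\Z^2$, the identity $2v=(v+Mv)+(v-Mv)$ together with primitivity of the eigenvectors forces the $GL_2(\Z)$-conjugation invariant $[\Z^2:L^++L^-]$ to equal $1$ or $2$; these values are attained by $(\SqMatSmall{1}{0}{0}{-1})$ and $(\SqMatSmall{0}{1}{1}{0})$ respectively.

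Then I would compute the normal closure in $\Pi$ of each representative. Using $\Gamma\cong C_2*C_3$ and $\Gamma^{ab}\cong\Z/2\oplus\Z/3$, the closure of $r_3$ is the second power subgroup $\Gamma^2=\OP{Ker}(\Gamma\to\Z/2)$, which has index $4$ in $\Pi$ and is characteristic (so is normal in $\Pi$); a calculation in $\Gamma/\Gamma(2)\cong S_3$ identifies it with $\ANG{\Gamma(2),r_3}$. Similarly the closure of $r_2$ is $\Gamma^3=\OP{Ker}(\Gamma\to\Z/3)$ of index $6$, identified with $\ANG{\Gamma(3),r_2,(\SqMatSmall{1}{2}{-1}{-1})}$ via the Klein-four subgroup of $\Gamma/\Gamma(3)\cong A_4$. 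The closure of $(\SqMatSmall{1}{0}{0}{-1})$ equals $\Pi(2)=\OP{Ker}(\Pi\to PGL_2(\F_2)\cong S_3)$: the representative lies in $\Pi(2)$, and the $\Pi/\Pi(2)\cong S_3$ action on $\Pi(2)\cong C_2*C_2*C_2$ (the Wehler description of Example \ref{ex:wehler}) permutes the three free generators transitively, so the closure is all of $\Pi(2)$. Finally $(\SqMatSmall{0}{1}{1}{0})$ projects to a transposition in $S_3$, whose normal closure is all of $S_3$, so its normal closure in $\Pi$ equals $\Pi$ itself.

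Combining the four cases, $P$ contains one of $\Gamma^2$, $\Gamma^3$, $\Pi(2)$, or $\Pi$, each of which contains one of the three subgroups named in the proposition. The step I expect to demand the most care is the equality between the normal closure of $(\SqMatSmall{1}{0}{0}{-1})$ and the whole of $\Pi(2)$: the inclusion $\subseteq$ is immediate, while $\supseteq$ relies on the structure $\Pi(2)\cong C_2*C_2*C_2$ together with the transitive $S_3$-action on its generators. The remaining work is the classification of non-$\Gamma$ involutions by the eigenspace-sum invariant and routine computations in the finite quotients $\Gamma/\Gamma(n)$ for $n\in\{2,3\}$ combined with the abelianization of $C_2*C_3$.
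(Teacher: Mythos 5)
The paper itself offers no proof of this proposition (it defers entirely to the citation of \cite[Lemma 3.2]{SIKpower}), so your argument can only be judged on its own terms. Your strategy --- classify the four $\Pi$-conjugacy classes of torsion and show each normal closure contains one of the listed subgroups --- is sound, and your preliminary classification (orders $2$ and $3$ only; one class of order $3$; one class of involutions in $\Gamma$; two classes of involutions outside $\Gamma$ distinguished by $[\Z^2:L^++L^-]\in\{1,2\}$) is correct, as are the first three normal-closure computations, including the one you flag as delicate: $\langle\langle(\SqMatSmall{1}{0}{0}{-1})\rangle\rangle_\Pi=\Pi(2)$ does follow from $(\SqMatSmall{1}{0}{0}{-1})\in\Pi(2)$ together with the transitive $\Pi/\Pi(2)$-action on the three generator classes of $C_2*C_2*C_2$.

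The gap is in the fourth case. The normal closure of $w=(\SqMatSmall{0}{1}{1}{0})$ in $\Pi$ is \emph{not} all of $\Pi$, and the argument you give is a non sequitur: that $w$ reduces to a transposition in $\Pi/\Pi(2)\cong S_3$ with full normal closure only shows $\langle\langle w\rangle\rangle\cdot\Pi(2)=\Pi$, not $\langle\langle w\rangle\rangle=\Pi$. In fact $\Pi^{ab}\cong(\Z/2\Z)^2$ and $w$ has nontrivial image there, so $\langle\langle w\rangle\rangle$ is proper; using the Coxeter presentation $\Pi=\ANG{r_1,r_2,r_3\mid r_i^2,(r_1r_2)^2,(r_2r_3)^3}$ with $r_1=(\SqMatSmall{1}{0}{0}{-1})$, $r_2=w$, $r_3=(\SqMatSmall{-1}{1}{0}{1})$, killing $r_2$ normally forces $r_3=1$ (from $r_3^2=r_3^3=1$) and leaves $\ANG{r_1\mid r_1^2}\cong C_2$, so $\langle\langle w\rangle\rangle$ has index exactly $2$. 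Note also that your argument for $(\SqMatSmall{1}{0}{0}{-1})$ does not transfer here, since $w\notin\Pi(2)$. The conclusion of the proposition nonetheless survives, and the repair is short: $w$ and $r_3$ are $\Pi$-conjugate (both are determinant $-1$ involutions with eigenlattice-sum of index $2$, by your own classification), so $\langle\langle w\rangle\rangle$ contains $wr_3=(\SqMatSmall{0}{1}{-1}{1})$, which has order $3$; this reduces the case to your order-$3$ case and gives $\langle\langle w\rangle\rangle\supseteq\Gamma^2$. With that one substitution the proof is complete.
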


In order to study the structure of congruence subgroups,
 we apply the following famous theorems in group theory.

\begin{theorem}[Nielsen--Schreier Theorem, see e.g.\ {\cite{R}}] \label{Nielsen-Schreier}
Every subgroup $F'$ of a free group $F$ is free.
Moreover, if both the rank $r$ of $F$
 and the index $e := [F:F']$ are finite,
 then the rank $r'$ of $F'$ is also finite and satisfies
 $e(r-1)=r'-1$.
\end{theorem}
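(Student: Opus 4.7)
The plan is to prove the Nielsen--Schreier Theorem by the standard topological (covering space) argument, realizing free groups as fundamental groups of graphs. Let $F$ be a free group (of arbitrary rank, not necessarily finite). First I would build a connected graph $W$ (a $1$-dimensional CW complex) having exactly one vertex and one loop for each element of a free basis of $F$, so that $\pi_1(W) \cong F$. Given a subgroup $F' \leq F$, classical covering space theory produces a connected covering $p \colon \widetilde{W} \to W$ with $p_*\pi_1(\widetilde{W}) = F'$, and the number of sheets equals the index $e = [F:F']$.

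Next I would observe that a covering of a $1$-dimensional CW complex is again a $1$-dimensional CW complex; in particular $\widetilde{W}$ is a graph. Therefore it suffices to show that the fundamental group of any connected graph is free. For this, choose a spanning tree $T \subseteq \widetilde{W}$ (using Zorn's lemma in the infinite case). Contracting $T$ to a point is a homotopy equivalence, and the quotient $\widetilde{W}/T$ is a wedge of circles indexed by the edges of $\widetilde{W}$ not in $T$. Hence $\pi_1(\widetilde{W})$ is free on these edges, so $F' \cong \pi_1(\widetilde{W})$ is free.

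For the rank formula in the finite case, suppose $F$ has finite rank $r$ and $e = [F:F'] < \infty$. Take $W$ to be the wedge of $r$ circles, so $W$ has one vertex and $r$ edges, and its Euler characteristic is $\chi(W) = 1 - r$. The $e$-sheeted cover $\widetilde{W}$ has $e$ vertices and $er$ edges, giving
\begin{equation*}
 \chi(\widetilde{W}) = e - er = e(1 - r).
\end{equation*}
On the other hand, $\widetilde{W}$ is homotopy equivalent to a wedge of $r'$ circles where $r'$ is the rank of $F'$, so $\chi(\widetilde{W}) = 1 - r'$. Equating these yields $1 - r' = e(1-r)$, i.e.\ $e(r-1) = r' - 1$, as required.

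The only genuinely delicate point is the existence of a spanning tree in an infinite connected graph, which I would settle by a standard Zorn's lemma argument on the poset of subtrees; everything else is elementary covering space theory and an Euler characteristic count. No substantial new idea is needed beyond these ingredients.
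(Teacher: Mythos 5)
Your argument is correct and complete: realizing $F$ as $\pi_1$ of a wedge of circles, passing to the $e$-sheeted cover corresponding to $F'$, collapsing a spanning tree to see that $\pi_1$ of a graph is free, and counting Euler characteristics to get $e(1-r)=1-r'$ is the standard covering-space proof of Nielsen--Schreier. The paper does not prove this statement at all --- it is quoted as a classical theorem with a reference to \cite{R} --- so there is nothing to compare against; your proof is exactly the one a reader would expect to find in that reference.
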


\begin{theorem}[Kurosh's Theorem \cite{Ma}] \label{Kurosh's theorem}
Let $G=A\ast B$ be the free product of groups $A$ and $B$,
 and let $H \subseteq G$ be a subgroup of $G$.
Then there exist a family $(A_i)_{i\in I}$ of subgroups $A_i \subseteq A$,
 a family $(B_j)_{j\in J}$ of subgroups $B_j \subseteq B$,
 families $(g_i)_{i\in I}$ and $(f_j)_{j\in J}$ of elements of $G$,
 and a subset $S\subseteq G$ such that 
\begin{equation*}
 H=F(S)\ast(\ast_{i\in I} \, g_iA_ig_i^{-1})\ast (\ast_{j\in J} \, f_jB_jf_j^{-1}),
\end{equation*}
 where $F(S)$ is the free group with free basis $S$.
\end{theorem}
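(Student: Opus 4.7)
The plan is to prove Kurosh's theorem via Bass--Serre theory, i.e.\ by analyzing the action of $H$ on the Bass--Serre tree of $G = A \ast B$.

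First, I would construct the Bass--Serre tree $T$ associated to the free product $G = A \ast B$. Its vertex set is the disjoint union of coset spaces $G/A \sqcup G/B$, and its edge set is $G$ itself, with the edge corresponding to $g \in G$ joining $gA$ to $gB$. The key fact that $T$ is indeed a tree follows from the normal form theorem for free products: connectedness corresponds to the existence of a normal form, while acyclicity corresponds to its uniqueness. The left-multiplication action of $G$ on $T$ is without inversions, with vertex stabilizers the conjugates of $A$ and $B$, and all edge stabilizers trivial.

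Next, I would restrict this action to $H \subseteq G$. Edge stabilizers stay trivial, while the stabilizer in $H$ of a vertex $gA$ (resp.\ $gB$) is $H \cap gAg^{-1}$ (resp.\ $H \cap gBg^{-1}$), a subgroup of a conjugate of $A$ (resp.\ $B$). Choose orbit representatives $\{g_i A\}_{i \in I}$ and $\{f_j B\}_{j \in J}$ for the $H$-action on $G/A$ and $G/B$, and set $A_i := g_i^{-1}(H \cap g_i A g_i^{-1}) g_i \subseteq A$ and similarly $B_j \subseteq B$. The quotient $Y := H \backslash T$ then carries a natural graph of groups structure with vertex groups $g_i A_i g_i^{-1}$ and $f_j B_j f_j^{-1}$, and with trivial edge groups.

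By the fundamental theorem of Bass--Serre theory, $H$ is isomorphic to the fundamental group of this graph of groups. Fix a spanning subtree $Y_0 \subseteq Y$; edges of $Y_0$ contribute no new generators (the amalgamations become trivial identifications because the edge groups are trivial), while each edge of $Y \setminus Y_0$ contributes a stable letter $s_e$. Setting $S := \{s_e\}$ yields the desired decomposition
\begin{equation*}
 H \cong F(S) \ast \bigl( \ast_{i \in I} g_i A_i g_i^{-1} \bigr)
  \ast \bigl( \ast_{j \in J} f_j B_j f_j^{-1} \bigr).
\end{equation*}

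The hard part is the underlying Bass--Serre machinery: verifying that $T$ is genuinely a tree (equivalent to the normal form theorem) and establishing the fundamental theorem that identifies $H$ with $\pi_1(\mathcal{G}, Y)$. Once these are granted, the decomposition is read off directly from the vertex and edge data of $Y$. Alternatively, following the authors' cited source, one can give a purely combinatorial proof by induction on the syllable length of reduced expressions in $A \ast B$, choosing a Schreier-like transversal of $H$ and carefully tracking the $A$- and $B$-syllables that appear; there the main technical difficulty is the bookkeeping required to ensure that the resulting generating sets assemble into a genuine free product.
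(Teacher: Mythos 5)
Your proof is correct, but it does not follow the route the paper takes: the paper offers no proof of Kurosh's theorem at all, simply citing MacLane's 1958 paper, whose argument is purely combinatorial (a Schreier-type analysis of reduced words and double coset representatives in $A \ast B$, with the bookkeeping you allude to in your last sentence). Your Bass--Serre argument is the standard modern alternative, and the outline is sound: the tree $T$ with vertices $G/A \sqcup G/B$ and edges $G$ is indeed a tree precisely because of the normal form theorem, the $H$-stabilizers and trivial edge groups are identified correctly, and the structure theorem for groups acting on trees with trivial edge stabilizers yields exactly the asserted decomposition, with $F(S)$ generated by the stable letters of the edges outside a spanning subtree of $H \backslash T$. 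The trade-off is as you say: your approach concentrates all the difficulty into two black boxes (that $T$ is a tree and the fundamental theorem identifying $H$ with the fundamental group of the quotient graph of groups), in exchange for a conceptually transparent derivation that generalizes immediately to free products of arbitrarily many factors and to amalgams; MacLane's proof is elementary and self-contained but requires delicate word-level combinatorics. Either is acceptable here, since the paper treats the theorem as a quoted classical result.
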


Since $\Gamma = PSL_2(\Z)$
 is isomorphic to the free product $C_2 * C_3$,
 it follows from Kurosh's theorem that any subgroup of
 $\Gamma$ is the free product of a certain number of copies of
 $C_2$, $C_3$ and the infinite cyclic group $C_\infty$.
For example, for $n \geq 2$,
 the principal congruence subgroup $\Gamma(n)$
 is torsion-free (that is, it contains no torsion elements)
 by Proposition \ref{prop:torsion-grp}.
Hence $\Gamma(n)$ is a free group.
Moreover, the rank of $\Gamma(n)$ as a free group
 is equal to $\frac{1}{6}[\Gamma:\Gamma(n)]+1$
 (see below).


One can study subgroups of $\Pi$ of finite index
 in a topological way, as follows.
Let $\MF{H}$ denote the upper-half plane.
We identify $\MF{H}$ with the following quotient space:
\begin{equation*}
 \{ \tau \in \C \bigm| \OP{Re}(\tau) \neq 0 \}
 / \ANG{\rho},
\end{equation*}
 where $\rho \colon \tau \mapsto \bar{\tau}$
 is the involution defined by complex conjugate.
Then $PGL_2(\R)$ acts on $\MF{H}$ by linear fraction, as follows:
\begin{equation*}
 g \cdot \tau = \frac{a \tau+b}{c \tau+d}\,, \qquad
 g=\SqMat{a}{b}{c}{d} \in PGL_2(\R).
\end{equation*}
The action of $g \in PGL_2(\R)$ on $\MF{H}$ is holomorphic if
 $g \in PSL_2(\R)$ and anti-holomorphic otherwise.
It is known that the quotient $\MF{H}/G$ of $\MF{H}$
 by a subgroup $G \subseteq \Gamma$
 of finite index becomes a (compact) complex curve
 with a finite number ($\geq 1$) of punctures, or ``cusps.''
(Precisely, if $G$ contains torsion elements,
 $\MF{H}/G$ is considered as an orbi-curve.)

We recall the following basic fact on topological surfaces:

\begin{theorem}
Let $Y$ be a closed topological surface with $r$ punctures.
Then we have the following two cases:
\begin{itemize}
\item[(1)]
 $Y$ is an orientable surface of genus $g \geq 0$, and
\begin{equation*}
 \pi_1(Y) \cong \ANG{a_i,b_i ~ (1 \leq i \leq g), ~ t_j ~ (1 \leq j \leq r) \bigm|
 [a_1,b_1] \cdots [a_g,b_g] t_1 \cdots t_r=1}\text{;}
\end{equation*}
\item[(2)]
 $Y$ is non-orientable and homeomorphic to a connected sum of
 $h \geq 1$ copies of the real projective plane, and
\begin{equation*}
 \pi_1(Y) \cong \ANG{c_i ~ (1 \leq i \leq h), ~ t_j ~ (1 \leq j \leq r) \bigm|
 c_1^2 \cdots c_h^2 t_1 \cdots t_r=1}.
\end{equation*}
\end{itemize}
Here $[a,b] := a b a^{-1} b^{-1}$.
In particular, if $r \geq 1$,
 then the fundamental group $\pi_1(Y)$
 is isomorphic to a free group of rank $2g+r-1$ in Case (1)
 and of rank $h+r-1$ in Case (2).
\end{theorem}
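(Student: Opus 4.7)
The plan is to combine the classification of closed topological surfaces with a Van Kampen / Euler-characteristic computation. First, I would compactify $Y$ by filling in the $r$ punctures to obtain a closed surface $\overline{Y}$, and apply the classification theorem for closed topological surfaces to identify $\overline{Y}$ with either the orientable genus-$g$ surface $\Sigma_g$ (Case 1) or the non-orientable connected sum $N_h$ of $h$ real projective planes (Case 2). Each of these admits the standard polygonal CW decomposition with one 0-cell $v$, the stated 1-cells $\{a_i,b_i\}_{i=1}^{g}$ or $\{c_i\}_{i=1}^{h}$, and a single 2-cell $D$ attached along the word
\[
 W := [a_1,b_1] \cdots [a_g,b_g] \quad \text{or} \quad W := c_1^2 \cdots c_h^2.
\]
Van Kampen's theorem applied to this CW structure immediately gives the surface group presentation in the case $r=0$ (no $t_j$'s appear and the single relation is $W=1$).

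Next, I would refine the CW structure on $\overline{Y}$ to accommodate the punctures: place each $p_j$ in the interior of $D$, introduce auxiliary arcs $\gamma_j$ from $v$ to a small neighborhood of $p_j$ (subdividing $D$ into $r$ smaller 2-cells $D_1,\ldots,D_r$, each containing a single $p_j$), and introduce loops $t_j$ based at $v$ encircling each puncture. A short computation using Van Kampen on the refined decomposition—followed by Tietze transformations eliminating the auxiliary generators $\gamma_j$—shows that each small 2-cell $D_j$ contributes an attaching word of the form $\gamma_j t_j \gamma_j^{-1}$, and these can be concatenated along the boundary of the original $D$. The net effect is a presentation on generators $\{a_i,b_i,t_j\}$ or $\{c_i,t_j\}$ with the single relation $W \cdot t_1 \cdots t_r = 1$, exactly as claimed.

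Finally, for the rank statement when $r \geq 1$, I would observe that $Y = \overline{Y} \setminus \{p_1,\ldots,p_r\}$ deformation retracts onto the 1-skeleton of the refined CW structure, since each 2-cell $D_j$ with a puncture removed retracts onto its boundary circle. Therefore $Y$ is homotopy equivalent to a connected graph, so $\pi_1(Y)$ is free (this also follows directly from Theorem \ref{Nielsen-Schreier} once one exhibits it as a subgroup of a free group, or simply from the fact that the fundamental group of a graph is free). The rank of this free group equals $1-\chi(Y)$, where
\[
 \chi(Y) = \chi(\overline{Y}) - r =
 \begin{cases} 2-2g-r & \text{in Case (1),} \\ 2-h-r & \text{in Case (2),} \end{cases}
\]
yielding rank $2g+r-1$ and $h+r-1$ respectively. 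This is consistent with the presentation above: when $r \geq 1$ one may solve the single relation for $t_r$ and eliminate both this generator and the relation, leaving a free group on the remaining generators of the correct rank.

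The main obstacle is the careful bookkeeping in the Van Kampen step verifying that the attaching words of the refined 2-cells concatenate to give precisely $W \cdot t_1 \cdots t_r$ after eliminating the auxiliary arcs $\gamma_j$; this requires attention to orientations of the boundary identifications in the polygonal model but is otherwise a standard exercise, documented in e.g.\ Massey's textbook on algebraic topology.
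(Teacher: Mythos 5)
Your proposal is correct: the compactification-plus-classification step, the polygonal CW model with a single $2$-cell attached along $W$, the Van Kampen refinement around the punctures yielding the relation $W\,t_1\cdots t_r=1$, and the deformation retraction onto the $1$-skeleton giving a free group of rank $1-\chi(Y)$ together constitute the standard and complete argument. The paper itself states this theorem as a recalled basic fact and supplies no proof, so there is no alternative route to compare against; your write-up (essentially the treatment in Massey) is exactly what the paper implicitly relies on, and the only point worth making explicit is that ``closed surface with $r$ punctures'' already presents $Y$ as $\overline{Y}\setminus\{p_1,\dots,p_r\}$, so the compactification in your first step is given rather than constructed.
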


This fact, combined with Nielsen--Schreier formula,
 implies the following:

\begin{proposition} \label{prop:free-group-12}
Let $P$ be a subgroup of $\Pi$ of finite index.
Suppose that $P$ is torsion-free.
Then $P$ is isomorphic to a free group of rank
 $\frac{1}{12}[\Pi:P]+1$.
\end{proposition}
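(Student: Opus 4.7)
The strategy is to realize $P$ as the fundamental group of a punctured topological surface (via the action on $\MF{H}$) and then determine the free rank by an Euler-characteristic count against the orbifold $\MF{H}/\Pi$.

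First, I would check that $P$ is free. Every finite-order element of $\Pi$ has a fixed point on $\MF{H}$, so a torsion-free $P$ acts freely, and the quotient $Y := \MF{H}/P$ is a topological $2$-manifold with $\pi_1(Y) \cong P$ (because $\MF{H}$ is contractible). Since $\MF{H}/\Pi$ has at least one cusp and $[\Pi:P]<\infty$, the surface $Y$ inherits at least one puncture; the preceding theorem on punctured surfaces then forces $P$ to be free of some rank $r$, and gives $\chi(Y) = 1 - r$.

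Second, I would pin down $r$ via $\chi(Y) = [\Pi:P]\cdot \chi^{\mathrm{orb}}(\MF{H}/\Pi)$. The standard modular-orbifold computation yields $\chi^{\mathrm{orb}}(\MF{H}/\Gamma) = -1/6$ (a once-punctured sphere with elliptic points of orders $2$ and $3$, reflecting $\Gamma \cong C_2 * C_3$); since $[\Pi:\Gamma] = 2$, multiplicativity of orbifold Euler characteristic under finite covers gives $\chi^{\mathrm{orb}}(\MF{H}/\Pi) = -1/12$. Substituting into $1 - r = -[\Pi:P]/12$ produces $r = [\Pi:P]/12 + 1$.

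The main obstacle is justifying the orbifold Euler-characteristic multiplicativity when $\Pi$ acts with orientation-reversing elements (so $\MF{H}/\Pi$ carries mirror boundary and $Y$ can be non-orientable). A cleaner workaround, matching the paper's hint to Nielsen--Schreier, is to apply the first step to $\Gamma(2)$ itself: it is torsion-free of index $12$ in $\Pi$, hence a free group, of rank $2$ (e.g.\ the preceding theorem applied to the thrice-punctured sphere $\MF{H}/\Gamma(2)$). Setting $P' := P \cap \Gamma(2)$, one applies the Nielsen--Schreier rank formula once to $P' \subseteq \Gamma(2)$ and once to $P' \subseteq P$ (using that $P$ is already known to be free from the first step); the two resulting expressions for $\mathrm{rank}(P') - 1$, combined with $[\Pi:\Gamma(2)][\Gamma(2):P'] = [\Pi:P][P:P']$, yield $r - 1 = [\Pi:P]/12$.
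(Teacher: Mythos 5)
Your argument is correct, and the version you actually complete (the Nielsen--Schreier workaround) takes a genuinely different route to the rank count than the paper does. Both proofs begin identically: torsion-freeness forces $P$ to act freely on $\MF{H}$, so $P\cong\pi_1(\MF{H}/P)$ is the fundamental group of a finite-type surface with at least one puncture, hence free of rank $1-\chi$. They diverge in how that rank is computed. The paper invokes, for every finite-index $P\subseteq\Gamma$, the classical modular-curve formula $\frac{1}{6}[\Gamma:P]=2(g-1)+r$, and then handles $P\not\subseteq\Gamma$ by a single application of Nielsen--Schreier to $P\cap\Gamma\subseteq P$ of index $2$. You instead anchor everything on the one explicit case $\Gamma(2)$ (free of rank $2$, index $12$ in $\Pi$) and run Nielsen--Schreier twice through $P':=P\cap\Gamma(2)$, once inside $\Gamma(2)$ and once inside $P$; the index identity $[\Pi:\Gamma(2)]\,[\Gamma(2):P']=[\Pi:P]\,[P:P']$ then gives $r-1=\frac{1}{12}[\Pi:P]$. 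What this buys you is independence from the general genus/cusp formula for arbitrary congruence-type subgroups: the only modular curve you need to recognize is $Y(2)=\BP^1\setminus\{0,1,\infty\}$, at the cost of one extra intersection and a second Nielsen--Schreier step. Your hesitation about your first route (multiplicativity of the orbifold Euler characteristic over $\MF{H}/\Pi$, which carries reflector boundary) is reasonable but not fatal --- with the standard conventions one does have $\chi^{\mathrm{orb}}(\MF{H}/\Pi)=-1/12$ and multiplicativity for orbifold covers with mirrors; since you do not rely on that route, the proof as completed is sound.
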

\begin{proof}
Since $P$ contains no torsion elements,
 the quotient map $\MF{H} \rightarrow \MF{H}/P$ is non-ramified
 and $P=\pi_1(\MF{H}/P)$ is a free group.
In the case $P \subseteq \Gamma$,
 the quotient $\mathfrak{H}/P$
 is a complex curve of genus $g$ with (say) $r$ punctures
 and we have $\frac{1}{6} [\Gamma:P]=2(g-1)+r$,
 as is well known in the theory of elliptic modular forms.
Hence the assertion holds in this case.
In the case $P \not\subseteq \Gamma$,
 we apply Nielsen--Schreier Theorem to $P \cap \Gamma \subset P$
 and obtain
\begin{equation*}
 \rank\, P - 1
 =\frac{1}{2} (\rank\, P \cap \Gamma - 1).
\end{equation*}
Thus the assertion for $P$ follows from 
 the assertion for $P \cap \Gamma \subseteq \Gamma$.
\end{proof}

\section{Proof of Main Theorem} \label{sect:cliff}

\subsection{Statement of Main Theorem}

Let $L$ be an 
 even lattice of rank $3$.
In this section, we prove Theorem \ref{thm:MAIN} below,
 which describes the structure of the orthogonal subgroups
 $O_\GAM(L)$ and so on.
We fix the following notations
(see Sections \ref{sect:def_cliff}--\ref{sect:def_quaternion}
 for the definitions):
\begin{align*}
 \CLF{L} = \CLFP{L} \oplus \CLFM{L} &:
  \text{the Clifford algebra of $L$ over $\Z$} \\
 \CLFP{L} =: B, ~ \CLFM{L} &:
  \text{the even and odd parts of $\CLF{L}$, respectively}
\end{align*}
We consider $B$ as a quaternion algebra;
 its reduced trace and norm are denoted 
 by $\OP{Tr}(~)$ and $\OP{Nr}(~)$, respectively.
The lattice $L$ is naturally contained in
 the odd part $\CLFM{L}$ as a free $\Z$-submodule
 and generate $\CLF{L}$ as a $\Z$-algebra.
Set
\begin{equation}
 \CLFPM{L} := \CLFP{L} \cup \CLFM{L}.
\end{equation}
For any $\alpha \in \CLFPM{L}$, we have
 (under the assumption that $L$ has rank $3$)
\begin{equation}
 N \alpha := \alpha \alpha^* \in \Z,
\end{equation}
 where we use the involution $*$ on $\CLF{L}$
 as in Definition \ref{def:spinor_map}.
If $\alpha \in B=\CLFP{L}$,
 we have $N \alpha = \OP{Nr}(\alpha)$.
For a subset $S \subseteq \CLFPM{L}$, define
\begin{equation}
  S^\times := S \cap \CLF{L}^\times
  = \{ \alpha \in S \bigm|
  N \alpha = \pm 1 \}.
\end{equation}
Throughout this section,
 we often use the element $E \in \CLFM{L}_\Q$
 (with an ambiguity of sign) satisfying
\begin{equation}
 Ex = xE ~ (\forall x \in \CLF{L}), ~
 E^* = -E, ~
 E^2 = \frac{1}{8} \disc(L).
\end{equation}
See Lemma \ref{lem:v_dot_E} for details.

\begin{theorem}[Main Theorem] \label{thm:MAIN}
Let $L$ be a (non-degenerate) even lattice of rank $3$.
For $\alpha \in \CLFPM{L}^\times$ and $v \in L$,
 we have $\alpha v \alpha^{-1} \in L$.
(Here the product $\alpha v \alpha^{-1}$ is taken in $\CLF{L}$.)
Moreover, we have an isomorphism
\begin{equation}
 \Gamma^\times(\CLFPM{L}) := \CLFPM{L}^\times / \{ \pm 1 \}
 \CONGa 
 O_\GAM(L),
\end{equation}
 which is induced by
\begin{equation}
 \alpha \mapsto h_\alpha :=
 (v \mapsto \varepsilon_\alpha \cdot \alpha v \alpha^{-1}),
 \qquad
 \varepsilon_\alpha =
 \begin{cases}
   1 & \text{if} \quad \alpha \in \CLFP{L}, \\
  -1 & \text{if} \quad \alpha \in \CLFM{L}.
 \end{cases}
\end{equation}
Here the discriminant kernel $O_\GAM(L)$
 is defined as the kernel of the natural map
 $O(L) \rightarrow O(q(L))$.
We have $\det(h_\alpha) = \varepsilon_\alpha$.
\end{theorem}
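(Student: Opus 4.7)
The plan is to construct the map $\alpha \mapsto h_\alpha$ and verify it in four logical steps: (i) the action takes $L$ into $L$; (ii) it lands in $O_\GAM(L)$; (iii) the assignment is an injective homomorphism satisfying $\det(h_\alpha) = \varepsilon_\alpha$; and (iv) it is surjective.

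For (i), the engine is the involution $*$: as the reversion on $\CLF{L}$, it fixes vectors and negates bivectors and the trivector, so its $(+1)$-eigenspace on $\CLFM{L}_\Q$ is exactly $L_\Q$. Since $N\alpha = \pm 1$ is central and $v^* = v$ for $v \in L$, a direct manipulation gives $(\alpha v \alpha^{-1})^* = \alpha v \alpha^{-1}$, and together with the obvious parity count this forces $\alpha v \alpha^{-1} \in L_\Q$; integrality from $\alpha^{-1} = \pm \alpha^* \in \CLF{L}$ then yields $\alpha v \alpha^{-1} \in L$. The isometry property follows from $h_\alpha(v)^2 = \alpha v^2 \alpha^{-1} = v^2$. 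For (ii), expanding $\alpha$ in the standard $\Z$-bases of $\CLFP{L}$ or $\CLFM{L}$ and using the elementary identities
\begin{equation*}
[v_iv_j, w] = \langle v_j, w\rangle v_i - \langle v_i, w\rangle v_j, \qquad
\{v_i, w\} = \langle v_i, w\rangle,
\end{equation*}
together with the analogous three-term formula for $\{v_1v_2v_3, w\}$, I get $[\alpha, w] \in L$ in the even case and $\{\alpha, w\} \in \CLFP{L}$ in the odd case, for every $w \in L^\vee$. Hence $h_\alpha(w) - w$ equals $[\alpha, w]\alpha^{-1}$ or $-\{\alpha, w\}\alpha^{-1}$, which is integer in $\CLF{L}$ and, by the argument of (i) applied to $w \in L_\Q$, lies in $L_\Q$, so in $L$.

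For (iii), the homomorphism property is immediate from the multiplicativity $\varepsilon_{\alpha\beta} = \varepsilon_\alpha \varepsilon_\beta$. If $h_\alpha = \mathrm{id}$ and $\alpha$ is even, then $\alpha$ centralises $L$ and hence $\CLF{L}$, so $\alpha$ lies in the centre $\Z \oplus \Z E$; intersecting with $\CLFP{L}^\times$ leaves only $\pm 1$. If $\alpha$ is odd, writing $\alpha = u + c \cdot v_1v_2v_3$ with $u \in L$ and $c \in \Z$, the anticommutation $\alpha v + v\alpha = 0$ for all $v \in L$ forces $u = 0$ (using non-degeneracy of $\langle \cdot, \cdot \rangle$ on $L$) and $c = 0$, contradicting $\alpha \ne 0$. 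For $\det(h_\alpha) = \varepsilon_\alpha$, both sides are homomorphisms and the Clifford group is generated by anisotropic vectors, so it suffices to check $\alpha = w$, where $-wvw^{-1} = s_w(v)$ identifies $h_w$ with the reflection $s_w$ of determinant $-1 = \varepsilon_w$.

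The main obstacle is (iv). Given $g \in O_\GAM(L)$, Cartan--Dieudonn\'e in $L_\Q$ writes $g = s_{w_1} \circ \cdots \circ s_{w_k}$ with anisotropic $w_i \in L_\Q$, and the reflection identity from (iii) gives $h_\beta = g$ for $\beta := w_1 \cdots w_k \in \CLF{L}_\Q^\times$. By the centre calculation in (iii) applied over $\Q$, every $\alpha \in \CLF{L}_\Q^\times$ with $h_\alpha = g$ differs from $\beta$ by a factor in $\Q^\times$, so one needs to choose $c \in \Q^\times$ with $\alpha := c\beta \in \CLF{L}$ and $N\alpha = c^2 N\beta = \pm 1$. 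This is exactly where the hypothesis $g \in O_\GAM(L)$ must enter, since $N\beta$ modulo squares is the spinor norm of $g$. My plan is to argue locally: at each prime $p$ the surjectivity of $O(L_p) \to O(q(L_p))$ quoted in Section~\ref{subsect:lattices}, combined with the local versions of (i)--(iii), produces $\alpha_p \in \CLF{L}_p^\times$ with $h_{\alpha_p} = g_p$ and $N\alpha_p \in \Z_p^\times$; uniqueness of $\beta$ up to $\Q^\times$ then lets me glue into a global $\alpha \in \CLFPM{L}^\times$. The refined statements for $SO_\GAM(L)$ and $SO^+_\GAM(L)$ then follow immediately from $\det(h_\alpha) = \varepsilon_\alpha$ and the identification of the spinor norm class of $h_\alpha$ with that of $N\alpha$.
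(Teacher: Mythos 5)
Your steps (i)--(iii) are correct and in fact give a cleaner, more self-contained treatment of those parts than the paper does: the paper establishes that $h_\alpha$ preserves $L$ and acts trivially on $A(L)$ by transporting everything to the unimodular lattice $W=\wedge^2\CLFP{L}\cong U^{\OSUM 3}$ and invoking the theory of discriminant forms for the pair of primitive sublattices $P^\pm\subset W$ with $(P^\pm)^\bot=P^\mp$ (Lemmas \ref{lem:def_Psi}, \ref{lem:action}, \ref{lem:clifford_minus_action}), whereas your commutator identities $[v_iv_j,w]=\langle v_j,w\rangle v_i-\langle v_i,w\rangle v_j$ together with the characterisation of $L_\Q$ as the $(+1)$-eigenspace of $*$ on $\CLFM{L}_\Q$ do the same job by direct computation. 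The injectivity argument and $\det(h_\alpha)=\varepsilon_\alpha$ also check out.

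The gap is in step (iv), which is the real content of the theorem. You must show that for $g\in O_\GAM(L)$ the projectively unique $\beta\in\CLFPM{L}_\Q^\times$ with $h_\beta=g$ can be rescaled to an integral element of unit norm. Your reduction to a local claim plus gluing is structurally fine, but the local claim --- that every $g_p\in O_\GAM(L_p)$ equals $h_{\alpha_p}$ for some $\alpha_p\in\CLFPM{L_p}$ with $N\alpha_p\in\Z_p^\times$ --- is exactly the surjectivity assertion of the theorem over $\Z_p$, and the ingredients you cite do not supply it. Nikulin's surjectivity of $O(L_p)\to O(q(L_p))$ concerns the discriminant \emph{image} $O(L_p)/O_\GAM(L_p)$, not the kernel, and your (i)--(iii) only show that $\CLFPM{L_p}^\times\to O_\GAM(L_p)$ is a well-defined injection; nothing yet controls which scalars $c_p$ make $c_p\beta$ both integral and of unit norm. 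Establishing that would require a genuine local generation theorem for $O_\GAM(L_p)$ in the style of O'Meara, which is comparable in difficulty to the global statement. The paper closes precisely this point by a different mechanism: it shows that the operator $\varepsilon\cdot\mu(1,\alpha_0^{-1})$ (resp.\ $(-N\beta_0)\cdot\tilde{\mu}(\beta_0)$) fixes $P^-$ and acts integrally on $P^+$, hence preserves the unimodular lattice $W$, and then Lemma \ref{lem:sl4_map} --- the maximality of $SL_4(\Z)$ in $SL_4(\R)$, giving $PSL_4(\Z)\cong SO^+(W)$ --- forces $B\cdot\alpha_0^{-1}=B$ and therefore $\alpha_0\in B^\times$. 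Without that lemma, or a worked-out local theory replacing it, your surjectivity argument does not close.
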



\begin{corollary}[cf.\ Proposition {\ref{prop:odd_pic_num}}]
\label{cor:ortho_plus}
In the same setting as above,
 we assume furthermore 
 that $A(L)$ is not a $2$-elementary abelian group
 (that is, $\id \neq - \id$ in $O(q(L))$).
Then we have an isomorphism
\begin{equation} \label{eq:o_plus_isom}
 \Gamma^\times(\CLFPM{L}) \cong O_\GAM(L) \CONGa
 \{ g \in O^+(L) \bigm| \bar{g} = \pm \id_{A(L)} \}; \quad
 \pm \alpha \mapsto \varphi_\alpha,
\end{equation}
 where $\bar{g}$ denotes the image of $g$ in $O(q(L))$
 and $\varphi_\alpha$ is defined by
\begin{equation}
 \varphi_\alpha(v)
 = \varepsilon_\alpha \cdot (N \alpha) \cdot h_\alpha(v)
 = (N \alpha) \cdot \alpha v \alpha^{-1} = \alpha v \alpha^*.
\end{equation}
Moreover, we have
\begin{equation}
 \det(\varphi_\alpha) = N \alpha, \quad
 \bar{\varphi}_\alpha =
 \varepsilon_\alpha \cdot (N \alpha) \cdot \id_{A(L)}.
\end{equation}
\end{corollary}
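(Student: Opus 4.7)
The plan is to bootstrap off the Main Theorem: it already gives an isomorphism $\PGB{\CLFPM{L}} \cong O_\GAM(L)$ via $\pm\alpha \mapsto h_\alpha$. I would compose this with the ``sign-correcting'' isomorphism $O_\GAM(L) \cong H$ (exactly the correspondence used in the proof of Proposition~\ref{prop:odd_pic_num}), which sends $h$ to $\varepsilon h$, where $\varepsilon\in\{\pm 1\}$ is the unique sign with $\varepsilon h\in O^+(L)$; such a sign exists and is unique because $\rank L = 3$ is odd, whence $O(L) = O^+(L)\times\{\pm\id_L\}$ from Section~\ref{subsect:lattices}. It then remains to identify this composition with $\pm\alpha \mapsto \varphi_\alpha$.

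First I would verify the identity $\varphi_\alpha(v) = \alpha v\alpha^* = \varepsilon_\alpha(N\alpha)\,h_\alpha(v)$: from $\alpha\alpha^* = N\alpha\in\{\pm 1\}$ one gets $\alpha^{-1} = (N\alpha)\alpha^*$, hence $(N\alpha)\alpha v\alpha^{-1} = \alpha v\alpha^*$, and Main Theorem's formula $h_\alpha(v) = \varepsilon_\alpha \alpha v\alpha^{-1}$ finishes the identification $\varphi_\alpha = \varepsilon_\alpha(N\alpha)\,h_\alpha$. Since this is a $\{\pm 1\}$-multiple of $h_\alpha\in O_\GAM(L)$, we have $\varphi_\alpha\in O(L)$, and a direct computation gives $\det(\varphi_\alpha) = (\varepsilon_\alpha N\alpha)^3 \cdot \varepsilon_\alpha = \varepsilon_\alpha^4(N\alpha)^3 = N\alpha$ as well as $\bar\varphi_\alpha = \varepsilon_\alpha(N\alpha)\bar h_\alpha = \varepsilon_\alpha(N\alpha)\id_{A(L)}$ (because $h_\alpha$ lies in the discriminant kernel). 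The hypothesis that $A(L)$ is not $2$-elementary is precisely what makes the two cases $\bar\varphi_\alpha = \pm\id$ distinct in $O(q(L))$, so the sign $\varepsilon_\alpha(N\alpha)$ can indeed be read off from $\bar\varphi_\alpha$.

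The main obstacle is verifying $\varphi_\alpha\in O^+(L)$, equivalently that $\varepsilon_\alpha(N\alpha)$ is the correct sign placing $\pm h_\alpha$ in $O^+(L)$. My approach is to compute the spinor norm of $h_\alpha$: extending scalars to $\Q$, one may write $\alpha$ as a product $v_1\cdots v_r$ of nonzero vectors in $L_\Q$, and unwind the Clifford conjugation via $v_i w v_i^{-1} = -\tau_{v_i}(w)$ to identify $h_\alpha$ with the composition of reflections $\tau_{v_1}\cdots\tau_{v_r}$. This yields $\theta(h_\alpha) \equiv \prod_i\langle v_i,v_i\rangle \equiv N\alpha \pmod{(\Q^\times)^2}$ (up to a sign coming from the convention for the involution $*$). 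Combined with the standard formula $\theta(-\id_L) \equiv \disc(L)\pmod{(\Q^\times)^2}$ and the signature-dependent characterization of $O^+(L)$ from Section~\ref{subsect:lattices} (in terms of the signs of $\det(g)^{m+k}$ and $\theta(g)$), this pins down which sign sends $h_\alpha$ into $O^+(L)$ and gives $\varphi_\alpha\in O^+(L)$.

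Putting the pieces together: the Main Theorem provides $\PGB{\CLFPM{L}}\cong O_\GAM(L)$, step three shows $\pm\alpha\mapsto \varphi_\alpha$ is the composition of this with the sign-correcting bijection $O_\GAM(L)\cong H$, and step two records the stated formulas $\det(\varphi_\alpha) = N\alpha$ and $\bar\varphi_\alpha = \varepsilon_\alpha(N\alpha)\id_{A(L)}$, completing the proof of the corollary.
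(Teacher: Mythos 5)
Your proposal is correct and takes essentially the same route as the paper's own (very terse) proof: both reduce everything to Theorem~\ref{thm:MAIN} and then justify $\varphi_\alpha \in O^+(L)$ by combining the spinor norm $\theta(h_\alpha) = N\alpha$ (which Definition~\ref{def:spinor_map} takes as the \emph{definition} of $\theta$, so your reflection computation merely re-derives it) with the convention for $O^+(L)$ from Section~\ref{subsect:lattices}, the determinant and discriminant-form formulas being the same routine verifications in both. The one step you leave asserted rather than executed --- the final case check that $\varepsilon_\alpha \cdot N\alpha$ is the sign matching $\det(g)^{m+k}$ against $\theta(g)$ --- is exactly where the paper also stops, so nothing is missing relative to its argument.
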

\begin{proof}
We have $g_\alpha := (v \mapsto \alpha v \alpha^{-1}) \in SO(L)$.
Hence $(N \alpha) \cdot g_\alpha \in O^+(L)$.
See Section \ref{subsect:lattices}
 for the orthogonal subgroup $O^+(L)$
 and Definition \ref{def:spinor_map} for spinor norm.
(If $A(L)$ is a $2$-elementary abelian group,
 then $- \id_L \in O_\GAM(L)$ and
 the map (\ref{eq:o_plus_isom}) is a $2$-to-$1$ map.)
\end{proof}

\begin{corollary} \label{cor:clifford_plus}
We have isomorphisms
\begin{equation}
 SO_\GAM(L) \cong \Gamma^\times(B) := B^\times/\{ \pm 1 \}, \quad
 SO^+_\GAM(L) \cong \Gamma^1(B) := B^1/\{ \pm 1 \},
\end{equation}
 which is induced by
 $\alpha \mapsto g_\alpha := (v \mapsto \alpha v \alpha^{-1})$.
Here
\begin{equation}
 B^1 := \{ \alpha \in B \bigm| \OP{Nr}(\alpha)=1 \}
 \subset
 B^\times = \{ \alpha \in B \bigm| \OP{Nr}(\alpha)= \pm 1 \}.
\end{equation}
(Recall that $\OP{Nr}(\alpha)=N \alpha$ for $\alpha \in B$.)
\end{corollary}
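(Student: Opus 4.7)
The plan is to deduce both isomorphisms of Corollary \ref{cor:clifford_plus} directly by restricting the Main Theorem isomorphism $\Gamma^\times(\CLFPM{L}) \cong O_\GAM(L)$ from $\CLFPM{L}^\times$ to the even part $B^\times = \CLFP{L}^\times$. The key observation is that for $\alpha \in B$ we have $\varepsilon_\alpha = +1$, so the map $h_\alpha$ of Theorem \ref{thm:MAIN} collapses to $g_\alpha \colon v \mapsto \alpha v \alpha^{-1}$, and $\det(g_\alpha) = +1$.

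First I would establish $SO_\GAM(L) \cong \Gamma^\times(B)$. By Theorem \ref{thm:MAIN}, every element of $O_\GAM(L)$ is of the form $h_\alpha$ for some $\pm \alpha \in \CLFPM{L}^\times/\{\pm 1\}$, and $\det(h_\alpha) = \varepsilon_\alpha$. Hence $h_\alpha \in SO_\GAM(L)$ if and only if $\varepsilon_\alpha = +1$, i.e.\ $\alpha \in B^\times$. On $B^\times$ the formula collapses to $h_\alpha = g_\alpha$, so restricting the Main Theorem isomorphism to $B^\times/\{\pm 1\} = \Gamma^\times(B)$ gives exactly the claimed bijection $\alpha \mapsto g_\alpha$, and this restriction is a group homomorphism (subgroup of subgroup) by construction.

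Next I would identify which elements of $\Gamma^\times(B)$ correspond to $SO^+_\GAM(L)$. For this I invoke Corollary \ref{cor:ortho_plus}: for every $\alpha \in B^\times$, the isometry $\varphi_\alpha = (N\alpha) \cdot g_\alpha$ lies in $O^+(L)$. Because $L$ has odd rank $3$, the sign convention of Section \ref{subsect:lattices} gives $O(L) = O^+(L) \times \{\pm \id_L\}$ with $-\id_L \notin O^+(L)$; in particular, exactly one of $g_\alpha$ and $-g_\alpha$ lies in $O^+(L)$. Comparing with $\varphi_\alpha \in O^+(L)$ yields $g_\alpha \in O^+(L)$ if and only if $N\alpha = +1$, i.e.\ $\alpha \in B^1$. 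Combined with the first isomorphism, this identifies $SO^+_\GAM(L)$ with $B^1/\{\pm 1\} = \Gamma^1(B)$.

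The only genuinely delicate point is the sign bookkeeping among $\varepsilon_\alpha$, $\det(h_\alpha)$, $N\alpha$, and the convention defining $O^+(L)$ in odd rank; once those conventions are matched up, no further computation is required, since this corollary is essentially a cleaner repackaging of the two isomorphisms already contained in Theorem \ref{thm:MAIN} and Corollary \ref{cor:ortho_plus}.
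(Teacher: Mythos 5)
Your argument is correct and is essentially the paper's own route: the paper states Corollary \ref{cor:clifford_plus} without a separate proof precisely because it is the restriction of the Main Theorem isomorphism to the even part (where $\varepsilon_\alpha=+1$ forces $\det(h_\alpha)=1$, so $B^\times/\{\pm 1\}$ is exactly the preimage of $SO_\GAM(L)$), combined with the spinor-norm fact $\theta(g_\alpha)=N\alpha$ and the convention $O^+(L)=\{\OP{sgn}(\theta(g))\cdot g \mid g\in SO(L)\}$ to single out $B^1$. The only cosmetic remark is that the fact you quote from Corollary \ref{cor:ortho_plus} (namely $(N\alpha)\cdot g_\alpha\in O^+(L)$) does not depend on that corollary's extra hypothesis that $A(L)$ is not $2$-elementary, so your citation is harmless.
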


\begin{remark}
In Corollary \ref{cor:clifford_plus},
 if $L$ is definite, we have
 $\Gamma^\times(B)=\Gamma^1(B)$. 
On the other hand, if $L$ is indefinite,
 the index $[\Gamma^\times(B) : \Gamma^1(B)]$
 is equal to $1$ or $2$, where each case can occur.
(See Section \ref{subsect:index_1_2}.)
\end{remark}

\begin{remark} \label{rem:SO_normal}
We can describe $SO(L)$ and $SO^+(L)$ 
 in a similar manner to Corollary \ref{cor:clifford_plus}, as follows
 (see Proposition \ref{prop:funct_cliff}).
Define
\begin{equation}
 \NORM(B) := \{ \alpha \in B\OverQ^\times \bigm|
 \alpha B \alpha^{-1}=B \}, \quad
 \NORM^+(B) := \{ \alpha \in \NORM(B) \bigm| \OP{Nr}(\alpha)>0 \}.
\end{equation}
We have a homomorphism $\NORM(B) \rightarrow SO(L)$
 defined by
 $\alpha \mapsto g_\alpha := (v \mapsto \alpha v \alpha^{-1})$,
 which induces isomorphisms
\begin{equation}
 SO(L) 
 \cong \NORM(B)/\Q^\times, \quad
 SO^+(L) 
 \cong \NORM^+(B) / \Q^\times.
\end{equation}
Under this correspondence,
 the spinor norm 
 of $g_\alpha \in SO(L)$
 is given as $\OP{Nr}(\alpha)$ modulo $(\Q^\times)^2$.
(See Section \ref{sect:def_cliff}
 below for spinor norm.)
\end{remark}

\subsection{Clifford algebras $\CLF{L}$ and $\CLFP{L}$}
\label{sect:def_cliff}

In this section we review Clifford algebras,
 which play an important role in our computations.
See e.g.\ \cite{O} for details of Clifford algebras.

Let $L$ be an even lattice of rank $n$.
The Clifford algebra $\CLF{L}$ of $L$ is defined as
 $\CLF{L}:=T(L)/I_L$, where
 $T(L):=\bigoplus_{k=0}^{\infty} L^{\otimes k}$,
 $L^{\otimes 0}:=\Z$
 and $I_L$ is the two-sided ideal of the tensor algebra $T(L)$
 generated by the elements of the form
 $v \otimes v-\frac{1}{2} \langle v,v \rangle$ for $v \in L$.
In particular, we have $vw+wv=\langle v,w \rangle$
 in $\CLF{L}$ for $v,w \in L$.
In order to study $\CLF{L}$,
 it is sometimes convenient to consider
 $L(1/2)$ rather than $L$.
We define
\begin{equation}
 L_0 := L(1/2), \quad
 \ANGz{~,~} := \frac{1}{2} \ANG{~,~}.
\end{equation}
Thus we have
\begin{equation}
 \OP{disc}(L_0) = \frac{1}{2^n}\OP{disc}(L),
 \quad 
 v^2=\ANGz{v,v} \quad (v \in L).
\end{equation}
The even Clifford algebra $\CLFP{L}$
 is defined as the image of the subalgebra
 $\bigoplus_{k=0}^{\infty} L^{\otimes 2k} \subset T(L)$
 in $\CLF{L}$.
Similarly, the odd part $\CLFM{L}$ of $\CLF{L}$
 is defined as the image of
 $\bigoplus_{k=0}^{\infty} L^{\otimes(2k+1)}$.
We have the decomposition
\begin{equation}
 \CLF{L} = \CLFP{L} \oplus \CLFM{L}
\end{equation}
 as a free $\Z$-module.
From the construction,
 $L$ is naturally contained in $\CLFM{L}$.
As free $\Z$-modules,
 the ranks of
 $\CLF{L}$, $\CLFP{L}$ and $\CLFM{L}$ are $2^n$, $2^{n-1}$ and $2^{n-1}$,
 respectively.
In what follows, objects on $\CLF{L}$
 are linearly extended to those on
 $\CLF{L}\OverQ$ (if possible)
 and denoted by the same symbols.

Any isometry $g \in O(L)$
 extends naturally to an automorphism of
 $\CLF{L}$ as a $\Z$-algebra.
For $\alpha \in \CLF{L}\OverQ^\times$
 satisfying $\alpha L \alpha^{-1}=L$,
 we define $g_\alpha \colon L \rightarrow L$ by
 $g_\alpha(v)=\alpha v \alpha^{-1}$.
Then $g_\alpha$ is an isometry of $L$
 because we have $g_\alpha(v)^2=v^2$.
Set
\begin{equation}
 R := \{ r \in L\OverQ \bigm|
 r^2 \neq 0 \}
 \subseteq \CLFM{L}\OverQ \cap \CLF{L}\OverQ^\times.
\end{equation}
For $r \in R$,
 the reflection of $L\OverQ$ with respect to $r$
 is written as $-g_r$ (defined over $\Q$).
Since any element in $O(L\OverQ)$
 is represented as a composition of reflections,
 we have an isomorphism
\begin{equation} \label{eq:refl_over_Q}
 \ANG{R} / \Q^\times
 = \{ \alpha \in \CLFPM{L}\OverQ^\times \bigm|
 \alpha \cdot L\OverQ \cdot \alpha^{-1}=L\OverQ \} / \Q^\times
 \CONGa 
 O(L\OverQ)
\end{equation}
 induced by $\alpha \mapsto h_\alpha$, where
 $\CLFPM{L}\OverQ^\times :=%
 (\CLFP{L}\OverQ \cup \CLFM{L}\OverQ) \cap \CLF{L}\OverQ^\times$
 and
\begin{equation}
 h_\alpha := \varepsilon_\alpha \cdot g_\alpha =
 (v \mapsto \varepsilon_\alpha \cdot \alpha v \alpha^{-1}),
 \qquad
 \varepsilon_\alpha {} :=
 \begin{cases}
   1 & \text{if} \quad \alpha \in \CLFP{L}\OverQ, \\
  -1 & \text{if} \quad \alpha \in \CLFM{L}\OverQ.
 \end{cases}
\end{equation}
We have $\det(h_\alpha)=\varepsilon_\alpha$.

\begin{definition} \label{def:spinor_map}
The involution
 $v_1 \otimes \cdots \otimes v_k \mapsto v_k \otimes \cdots \otimes v_1$
 ($v_i \in L$)
 on $T(L)$ induces
 the involution $\alpha \mapsto \CLFinvo{\alpha}$ on $\CLF{L}$,
 which is an anti-automorphism as a $\Z$-algebra.
In the same notations as above,
 the spinor norm map
 (precisely for $(L_0)_\Q$, rather than for $L_\Q$) is defined by
\begin{equation}
 \theta \colon O(L_\Q) \rightarrow \Q^\times/(\Q^\times)^2;
 \quad
 h_\alpha \mapsto
 N \alpha \, \bmod{(\Q^\times)^2},
\end{equation}
 where
 $N \alpha := \alpha \CLFinvo{\alpha} = \CLFinvo{\alpha} \alpha \in \Q^\times$
 for $\alpha$ as in (\ref{eq:refl_over_Q}).
\end{definition}

\begin{proposition} \label{prop:funct_cliff}
In the same notations as above, the map
\begin{equation}
 \{ \alpha \in \CLFPM{L}\OverQ^\times \bigm|
 \alpha L \alpha^{-1}=L \} / \Q^\times
 \rightarrow O(L);
 \quad
 \alpha \bmod{\Q^\times} \mapsto h_\alpha
\end{equation}
 is an isomorphism.
\end{proposition}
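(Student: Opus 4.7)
The plan is to deduce this proposition essentially as a corollary of the already-established isomorphism
\begin{equation*}
 \{ \alpha \in \CLFPM{L}\OverQ^\times \bigm|
 \alpha \cdot L\OverQ \cdot \alpha^{-1}=L\OverQ \} / \Q^\times
 \CONGa O(L\OverQ)
\end{equation*}
in (\ref{eq:refl_over_Q}). I would first observe that the map in the proposition is well-defined: if $\alpha L \alpha^{-1} = L$, then $h_\alpha(L) = \varepsilon_\alpha \alpha L \alpha^{-1} = \varepsilon_\alpha L = L$, and since $h_\alpha$ is a $\Q$-linear isometry of $L_\Q$ by (\ref{eq:refl_over_Q}), its restriction to $L$ lies in $O(L)$. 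Furthermore, scaling $\alpha$ by an element of $\Q^\times$ does not change $h_\alpha$.

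For injectivity, suppose $\alpha$ and $\beta$ both normalize $L$ and satisfy $h_\alpha = h_\beta$. Since both represent the same element of $O(L\OverQ)$, the isomorphism (\ref{eq:refl_over_Q}) forces $\alpha = c \beta$ for some $c \in \Q^\times$, so the two classes coincide.

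The main point is surjectivity, and the key observation is that lifting already forces integrality of the conjugation action. Given $g \in O(L)$, view $g$ as an element of $O(L\OverQ)$ and use (\ref{eq:refl_over_Q}) to pick some $\alpha \in \CLFPM{L}\OverQ^\times$ with $\alpha L\OverQ \alpha^{-1} = L\OverQ$ and $h_\alpha = g$. For any $v \in L$ we then have
\begin{equation*}
 \alpha v \alpha^{-1} = \varepsilon_\alpha \, h_\alpha(v) = \varepsilon_\alpha \, g(v) \in L,
\end{equation*}
because $\varepsilon_\alpha = \pm 1$ and $g(v) \in L$. Hence $\alpha L \alpha^{-1} \subseteq L$. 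Applying the same argument to $g^{-1} \in O(L)$, which corresponds to the class of $\alpha^{-1}$ (up to the sign $\varepsilon_\alpha$, which is harmless in the equivalence class modulo $\Q^\times$), gives $\alpha^{-1} L \alpha \subseteq L$, hence $\alpha L \alpha^{-1} = L$. Thus $\alpha$ lies in the left-hand side and maps to $g$.

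I do not expect a real obstacle: the essential input is the $\Q$-version (\ref{eq:refl_over_Q}), and the only additional content is the elementary remark that a $\Q$-linear lift of an integral isometry automatically normalizes the integral lattice, because $\varepsilon_\alpha = \pm 1$ disappears upon restricting to membership in $L$. The one point to check carefully is that the equivalence relation on the domain is compatible: I would verify that the scalar $c \in \Q^\times$ relating two lifts of the same isometry genuinely lies in $\Q^\times$ and not merely in the center of $\CLF{L}\OverQ$, which is guaranteed because $L$ has odd rank $3$ so $\CLFP{L}\OverQ$ has center $\Q$ (and $\CLFM{L}\OverQ$ is the $\CLFP{L}\OverQ$-module generated by the pseudoscalar $E$, on which the same analysis applies after multiplying by $E$).
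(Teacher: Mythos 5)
Your proof is correct and matches the paper's treatment: the paper states this proposition without any written proof, presenting it as an immediate consequence of the rational isomorphism (\ref{eq:refl_over_Q}), and your argument supplies precisely the intended details --- well-definedness and injectivity inherited from the rational statement, and surjectivity via the observation that a lift $\alpha$ of $g \in O(L)$ automatically normalizes the integral lattice because $\alpha v \alpha^{-1} = \varepsilon_\alpha g(v) \in L$ and likewise for $\alpha^{-1}$. The only superfluous part is your closing paragraph on the center of $\CLF{L}\OverQ$, which re-establishes the injectivity of (\ref{eq:refl_over_Q}) (a fact the paper already asserts and you may simply cite) and needlessly restricts to rank $3$.
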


\subsection{Lattices of rank $3$}

Now we suppose that $L$ is of rank $3$. 
We fix a basis $(E_1,E_2,E_3)$ of $L$ and define
\begin{align}
 Q_L & {} := (\langle E_i,E_j \rangle)_{1 \leq i,j \leq 3}=
   \begin{pmatrix}
    2a & u & t \\
    u & 2b & s \\
    t & s & 2c
   \end{pmatrix}, \\
 \DD & {} := \disc(L)=\det(Q_L)
 =2(4abc+stu-a s^2-b t^2-c u^2),
\end{align}
 where $a,b,c,s,t,u \in \Z$.
As we wrote above, we consider $L_0 := L(1/2)$ rather than $L$:
\begin{align}
 Q_{L_0} := {} & \frac{1}{2} \, Q_L, \\
 \DDz := {} & \disc(L_0) = \det(Q_{L_0})
 =\frac{1}{8} \, \DD. 
\end{align}
We use the following basis $(e_i)$ of $\CLFP{L}$
 as a free $\Z$-module:
\begin{equation}
 B := \CLFP{L}
 = \bigoplus_{i=0}^{3} \Z e_i,
\end{equation}
 where
\begin{equation}
 e_0:=1, ~ e_1:=E_2 E_3, ~ e_2:=E_3 E_1, ~ e_3:=E_1 E_2. 
\end{equation}
We define an embedding of $B$ (as a $\Z$-algebra)
 into the matrix algebra $M_4(\Z)$ of size $4$ by
\begin{equation}
 \Phi \colon B \rightarrow M_4(\Z); \quad
 x \mapsto \Phi(x)
 \quad \text{with} \quad
 (x e_0,x e_1,x e_2,x e_3)
 =(e_0,e_1,e_2,e_3) \, \Phi(x).
\end{equation}
A direct computation shows that
 $M_i := \Phi(e_i)$ ($0 \leq i \leq 3$) are given as $M_0=1_4$,
\begin{align}
 M_1 &=
 \begin{pmatrix}
  0 & -b c & c u & -s u\\
  1 & s & 0 & u\\
  0 & 0 & 0 & b\\
  0 & 0 & -c & s
 \end{pmatrix}, \label{eq:mat_e1} \\
 M_2 &=
 \begin{pmatrix}
  0 & -s t & -a c & a s\\
  0 & t & 0 & -a\\
  1 & s & t & 0\\
  0 & c & 0 & 0
 \end{pmatrix}, \label{eq:mat_e2} \\
 M_3 &=
 \begin{pmatrix}
  0 & b t & -t u & -a b\\
  0 & 0 & a & 0\\
  0 & -b & u & 0\\
  1 & 0 & t & u
 \end{pmatrix}. \label{eq:mat_e3}
\end{align}
Hence $B$ is isomorphic
 to the $\Z$-span of the $M_i$ ($0 \leq i \leq 3$)
 as a $\Z$-algebra.

\subsection{$B := \CLFP{L}$ as a quaternion algebra}
\label{sect:def_quaternion}

In this section we study the structure of $B$ (and $B\OverQ$)
 as a quaternion algebra
 (see e.g.\ \cite{vigneras80} for details of quaternion algebras).
As a property of quaternion algebras, we have an isomorphism
 $B_\C \cong M_2(\C)$ (see the proof of Lemma \ref{lem:action}).
For $x \in B$, the trace and determinant of $x$
 as an element in $M_2(\C)$ under this isomorphism
 are called the reduced trace and norm of $x$,
 respectively. 
Similarly,
 the conjugate $\CLFinvo{x}$ is the adjugate matrix of $x$.
The fact is that the conjugate coincides with
 (the restriction of) the involution on $\CLF{L}$
 in Definition \ref{def:spinor_map}.
Hence the reduced trace and norm are $\Z$-valued on $B$.
\begin{align}
\text{Reduced trace} ~ \OP{Tr}(~) &:
 \quad \OP{Tr}(x)=x+\CLFinvo{x}=\frac{1}{2} \OP{trace}(\Phi(x)) \\
\text{Reduced norm} ~ \OP{Nr}(~) &:
 \quad \OP{Nr}(x) = N x = x \CLFinvo{x}, 
 \quad
 \OP{Nr}(x)^2=\det(\Phi(x)) \\
\text{Conjugate} &:
 \quad x \mapsto \CLFinvo{x} = \OP{Tr}(x)-x
\end{align}
We define a symmetric bilinear form on $B$ by
\begin{equation}
 \langle ~,~ \rangle_B \colon B \times B \rightarrow \Q; \quad
 (x,y) \mapsto \langle x,y \rangle_B := \frac{1}{2} \OP{Tr}(x\CLFinvo{y}).
\end{equation}
One can directly verify the following:
\begin{equation}
 Q_B:=(\langle e_i,e_j \rangle)_{0 \leq i,j \leq 3}
 =\frac{1}{2}
 \begin{pmatrix}
  2 & s & t & u\\
  s & 2 b c & s t-c u  & s u-b t\\
  t & s t-c u & 2 a c & t u-a s\\
  u & s u-b t & t u-a s & 2 a b
 \end{pmatrix}, \quad
 \det(Q_B)=\DDz^2.
\end{equation}
Let $\trzero$ denote the image of $B$
 by the orthogonal projection from $B\OverQ$ onto
\begin{equation}
 \trzeroQ := \{ x \in B\OverQ \bigm| \OP{Tr}(x)=0 \} = e_0^\bot,
\end{equation}
 that is,
\begin{equation}
 \trzero :=
 \{ x-\dfrac{1}{2}\OP{Tr}(x) \in \trzeroQ \bigm|
  x \in B \}
 =\bigoplus_{i=1}^{3} \Z \tilde{e}_i, \quad
 \tilde{e}_i:=e_i-\frac{1}{2}\OP{Tr}(e_i).
\end{equation}
By (\ref{eq:mat_e1})--(\ref{eq:mat_e3}), we have
\begin{equation}
 \tilde{e}_1=e_1-\frac{s}{2}, \quad
 \tilde{e}_2=e_2-\frac{t}{2}, \quad
 \tilde{e}_3=e_3-\frac{u}{2}.
\end{equation}
We consider $\trzero$ with the restriction of
 $\langle ~,~ \rangle_B$ as a $\Q$-valued lattice.

\begin{lemma} \label{lem:v_dot_E}
Define
\begin{equation}
 E := \frac{1}{6} \sum_{\sigma \in S_3} \OP{sgn}(\sigma) \cdot
 E_{\sigma(1)} E_{\sigma(2)} E_{\sigma(3)} \in \CLFM{L}\OverQ,
\end{equation}
 where $S_3$ is the symmetric group of degree $3$.
Then
 $Ex=xE$ for any $x \in \CLF{L}\OverQ$,
 $E^* = -E$ and $E^2=-\DDz$.
The element $E$ is independent of the choice of a $\Z$-basis $(E_i)$ of $L$,
 up to sign.
Moreover, let $(\dE_i)$ be the dual basis of $(E_i)$
 (that is, $(\dE_i)$ is a basis of $L_0^\vee$
 with $\ANGz{E_i,\dE_j}=\delta_{ij}$).
Then we have
\begin{itemize}
\item[(1)]
 $\ANG{v E,v' E}_B = \DDz \cdot \ANGz{v,v'}$
 \ for \ $v,v' \in L_0$; \\[-1.5ex]
\item[(2)]
 $\dE_i E = \tilde{e}_i$ \ for \ $1 \leq i \leq 3$; \\[-1.5ex]
\item[(3)]
 $L_0^\vee \cdot E 
 =\trzero \cong L_0^\vee(\DDz)$, \ 
 $Q_{\trzero} := (\langle \tilde{e}_i,\tilde{e}_j \rangle_B)_{1 \leq i,j \leq 3}
 = \DDz \cdot Q_{L_0}^{-1}$; \\[-1.5ex]
\item[(4)]
 $(E_1 E,E_2 E,E_3 E) %
 = (\tilde{e}_1,\tilde{e}_2,\tilde{e}_3) \, Q_{L_0}$; \\[-1.5ex] 
\item[(5)]
 $L_0 \cdot E = \DDz \cdot (\trzero)^\vee \cong L_0(\DDz)$.
\end{itemize}
Here we define
 $L_0 \cdot E := \{ v E \bigm| v \in L_0 \}$, and so on.
The map $L_0^\vee \rightarrow \trzero$
 defined by $v \mapsto v E$
 gives an isometry $L_0^\vee(\DDz) \cong \trzero$.
(This map has the ambiguity of sign, depending on $E$.)
\end{lemma}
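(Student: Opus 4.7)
The organizing observation is that the formula defining $E$ is $\Z$-trilinear and alternating in $(E_1, E_2, E_3)$. Consequently, under a change of $\Z$-basis by $M \in GL_3(\Z)$, the element $E$ is multiplied by $\det M = \pm 1$; this already proves the asserted basis-invariance up to sign. Combined with extension of scalars to $\Q$, it also lets me argue at the level of an orthogonal $\Q$-basis $(F_1, F_2, F_3)$ of $L\OverQ$, where $F_i F_j = -F_j F_i$ for $i \neq j$ and the six-term alternating sum collapses to $F_1 F_2 F_3$ (each summand contributes the same, up to the sign produced by anticommuting). Most of the structural assertions about $E$ can then be verified in this orthogonal model and transported back by multilinearity/scaling.

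For $E^* = -E$, reversing the factor order in each summand sends $E_{\sigma(1)} E_{\sigma(2)} E_{\sigma(3)}$ to $E_{(\sigma \tau)(1)} E_{(\sigma \tau)(2)} E_{(\sigma \tau)(3)}$ with $\tau = (1\,3)$, and $\OP{sgn}(\sigma \tau) = -\OP{sgn}(\sigma)$ forces the sign flip. For centrality, since $L$ generates $\CLF{L}$ as a $\Z$-algebra, it suffices to check $E v = v E$ for $v \in L$; in the orthogonal model $E = F_1 F_2 F_3$ commutes with each $F_j$ because the two anticommutations needed to move $F_j$ past the remaining two factors cancel. For $E^2 = -\DDz$, the direct orthogonal computation yields $(F_1 F_2 F_3)^2 = -F_1^2 F_2^2 F_3^2 = -\det(Q_{L_0, F})$, and both sides of $E^2 = -\det(Q_{L_0})$ scale by $(\det M)^2$ under basis change, so the relation persists after returning to the original integral basis.

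For the enumerated assertions, the crux is (4): $E_i \cdot E = \sum_j (Q_{L_0})_{ji} \tilde{e}_j$. I would obtain it by expanding the six cubic monomials comprising $E_i \cdot E$ using $E_i^2 = \ANGz{E_i, E_i}$ and $E_i E_j + E_j E_i = 2 \ANGz{E_i, E_j}$, then collecting terms in the commutator basis $\tilde{e}_k = \tfrac12 (E_{k+1} E_{k+2} - E_{k+2} E_{k+1})$ of $\trzero$. Assertion (2) is then formal: writing $\dE_i = \sum_j (Q_{L_0}^{-1})_{ji} E_j$ and applying (4) gives $\dE_i \cdot E = \tilde{e}_i$. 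Assertion (1) follows from $(v'E)^* = -E v'$ (using $v'^* = v'$ and $E^* = -E$) combined with centrality and $E^2 = -\DDz$:
\[
 2 \ANG{vE, v'E}_B = \OP{Tr}\!\bigl(v E \cdot (-E v')\bigr) = -E^2 \cdot \OP{Tr}(v v') = \DDz \cdot 2 \ANGz{v, v'}.
\]
Item (3) is bookkeeping: (2) identifies $L_0^\vee \cdot E$ with $\trzero$ as a $\Z$-module, and (1) applied to pairs $(\dE_i, \dE_j)$ yields $Q_{\trzero} = \DDz \cdot Q_{L_0}^{-1}$. For (5), comparing (4) with the dual-basis formula $\tilde{e}_i^\vee = \DDz^{-1} \sum_j (Q_{L_0})_{ji} \tilde{e}_j$ for $\trzero$ shows $E_i \cdot E = \DDz \cdot \tilde{e}_i^\vee$, so $L_0 \cdot E = \DDz \cdot (\trzero)^\vee$, and the isometry with $L_0(\DDz)$ is the content of (1).

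The main obstacle is the explicit expansion underlying (4): in a non-orthogonal basis one must Clifford-reduce the six cubic monomials in $E_i \cdot E$ and verify that the residue reorganizes cleanly into the columns of $Q_{L_0}$. Every other assertion is either a short consequence of multilinearity/alternation (basis-invariance, $E^* = -E$) or a formal consequence of the three structural properties of $E$ together with (4).
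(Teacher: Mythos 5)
Your proposal is correct, and its core mechanism is the same as the paper's: the paper proves this lemma by citing the general rank-$n$ statement (Proposition \ref{prop:alternating_sum}), whose proof likewise exploits that $E$ is multilinear and alternating in the basis, checks everything on an orthogonal $\Q$-basis (where $E$ collapses to $F_1F_2F_3$), and transports back. Your derivations of $E^*=-E$, centrality, $E^2=-\DDz$, and of items (1), (2), (3), (5) from (4) are all sound --- indeed more explicit than the paper, which leaves these as consequences of the Proposition; your identification $\tilde{e}_k=\tfrac12(E_{k+1}E_{k+2}-E_{k+2}E_{k+1})$ matches the one remark the paper does make. The one genuine divergence is how the key identity (4), $E_i\cdot E=\sum_j\ANGz{E_i,E_j}\,\tilde{e}_j$, is established: you propose a direct Clifford reduction of the six quartic monomials in a general basis, which you rightly flag as the only nontrivial computation, whereas the paper avoids any general-basis expansion by verifying (4) in the orthogonal case (where it is immediate) and then checking that both sides transform consistently under elementary transformations --- $E$ picking up $\det(\varepsilon)$ and the $\tilde{e}_j$ transforming by the cofactor matrix of $\varepsilon$. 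The paper's route buys a cleaner, computation-free argument that also generalizes to arbitrary rank; yours is more self-contained for rank $3$ but leaves a (routine, finite) Clifford computation to be carried out. Either completion is acceptable.
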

\begin{proof}
This is a special case of Proposition \ref{prop:alternating_sum}.
For example, $\hat{E}_1$ in Proposition \ref{prop:alternating_sum}
 is equal to
 $\frac{1}{2}(E_2 E_3-E_3 E_2)=%
 E_2 E_3-\frac{1}{2} \ANG{E_2,E_3}=\tilde{e}_1$.
\end{proof}

\begin{remark}
For the element $E$ in Lemma \ref{lem:v_dot_E},
 one can directly verify
\begin{equation}
 E=E_1 E_2 E_3 + \frac{1}{2}(-s E_1+t E_2-u E_3).
\end{equation}
\end{remark}

\begin{lemma} \label{lem:psi_image}
The linear map
 $\psi \colon L_0 \cdot E \rightarrow \wedge^2 \trzero$
 defined by 
\begin{equation} \label{eq:psi_mapping}
 (\psi(E_1 E),\psi(E_2 E),\psi(E_3 E)) =
 (\tilde{e}_2 \wedge \tilde{e}_3,
  \tilde{e}_3 \wedge \tilde{e}_1,
  \tilde{e}_1 \wedge \tilde{e}_2)
\end{equation}
 is an isomorphism of $\Z$-modules,
 which is independent of the choice of a $\Z$-basis $(E_i)$ of $L$.
\end{lemma}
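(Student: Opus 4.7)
The plan is to prove two things: that $\psi$ is a $\Z$-module isomorphism, and that its definition is independent of the chosen basis $(E_i)$.

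The isomorphism claim is immediate. By Lemma \ref{lem:v_dot_E}(5), multiplication by $E$ is an injection $L_0 \hookrightarrow \CLFM{L}\OverQ$ with image $L_0 \CDT E$, so $(E_1 \CDT E, E_2 \CDT E, E_3 \CDT E)$ is a $\Z$-basis of $L_0 \CDT E$. Likewise $(\tilde{e}_1, \tilde{e}_2, \tilde{e}_3)$ is a $\Z$-basis of $\trzero$, so its cyclic two-wedges $(\tilde{e}_2 \wedge \tilde{e}_3, \tilde{e}_3 \wedge \tilde{e}_1, \tilde{e}_1 \wedge \tilde{e}_2)$ form a $\Z$-basis of $\wedge^2 \trzero$. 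Since (\ref{eq:psi_mapping}) maps a $\Z$-basis to a $\Z$-basis, it extends uniquely to a $\Z$-module isomorphism $\psi$.

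For basis-independence, I would pick a second basis $(E_i')$ with $E_i' = \sum_j n_{ij} E_j$, $N = (n_{ij}) \in GL_3(\Z)$, and verify that the map $\psi'$ defined from $(E_i')$ agrees with $\psi$ by comparing them on the new basis $(E_i' \CDT E')$. The needed transformation laws come from unwinding the definitions: the alternating sum defining $E$ transforms by the determinant, yielding $E' = \det(N) \CDT E$; the dual basis transforms contragrediently, $\dE_i' = \sum_l (N^{-1})_{li} \dE_l$; and hence by Lemma \ref{lem:v_dot_E}(2),
\begin{equation*}
 \tilde{e}_i' = \det(N) \sum_l (N^{-1})_{li} \tilde{e}_l,
 \qquad
 E_i' \CDT E' = \det(N) \sum_p n_{ip} \, (E_p \CDT E).
\end{equation*}
Substituting these into both sides of $\psi'(E_i' \CDT E') = \tilde{e}_{j(i)}' \wedge \tilde{e}_{k(i)}'$ (with $(i,j(i),k(i))$ a cyclic permutation of $(1,2,3)$), the verification reduces to a classical $3 \times 3$ cofactor identity: the entries $n_{ip}$ of $N$, indexed cyclically, equal $\det(N)$ times the complementary $2\times 2$ minors of $N^{-1}$.

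The main obstacle is the sign- and index-bookkeeping in this cofactor identity, specifically reconciling the cyclic indexing $(i,j(i),k(i))$ with the summation conventions for $2\times 2$ minors of $N^{-1}$. A cleaner, invariant route that avoids bookkeeping is to construct $\psi$ coordinate-freely: using Lemma \ref{lem:v_dot_E}(3)--(5) together with the natural rank-$3$ isomorphism $\wedge^2 V \cong V^\vee \otimes \wedge^3 V$ applied to $V = \trzero$, one obtains $\wedge^2 \trzero \cong L_0 \otimes \wedge^3 \trzero$, and $\psi$ becomes $v \CDT E \mapsto v \otimes \eta$ for a canonical $\eta \in \wedge^3 \trzero$ built from Clifford multiplication by $E$. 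Since $\eta$ is intrinsic up to the sign ambiguity of $E$, basis-independence becomes manifest.
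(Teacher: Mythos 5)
Your proposal is correct, and it is worth separating its two halves. The isomorphism claim is handled exactly as it should be (a $\Z$-basis of $L_0\cdot E$ is sent to a $\Z$-basis of $\wedge^2\trzero$); the paper does not even spell this out. For basis-independence your \emph{primary} route is genuinely different from the paper's: you transform $(E_i)$ by $N\in GL_3(\Z)$, track how $E$, the dual basis, the $\tilde e_i$ and the $E_i \CDT E$ transform, and reduce the check to the adjugate identity expressing the cyclically indexed $2\times 2$ minors of $N^{-1}$ as $\det(N)^{-1}$ times the entries of $N$. Your transformation laws are right ($E'=\det(N)\,E$ by Proposition \ref{prop:alternating_sum}, and $\tilde e_i'=\det(N)\sum_l (N^{-1})_{li}\tilde e_l$ via Lemma \ref{lem:v_dot_E}(2)), and the sign bookkeeping you flag as the main obstacle does close up: both sides of $\psi'(E_i'\CDT E')=\tilde e_{j(i)}'\wedge\tilde e_{k(i)}'$ carry a net factor $\det(N)$, so the map is unchanged even under orientation-reversing basis changes. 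The paper instead takes the invariant route you only sketch at the end: it introduces $\Omega:=\DDz/(\tilde e_1\wedge\tilde e_2\wedge\tilde e_3)$, notes that numerator and denominator both scale by $\det(N)^2$ so $\Omega$ is canonical, uses the pairing $(x,w)\mapsto(x\wedge w)\cdot\Omega$ to identify $\wedge^2\trzeroQ$ with $\Hom_{\Q}(\trzeroQ,\Q)$, and defines $\psi(v\CDT E)$ as the preimage of the functional $\ANG{-,v\CDT E}_B$; formula (\ref{eq:psi_mapping}) then falls out of Lemma \ref{lem:v_dot_E}(1) and independence is built in. Your computational route buys an elementary, self-contained verification at the cost of the cofactor bookkeeping; the paper's buys manifest independence at the cost of introducing $\Omega$. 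One caution on your ``cleaner'' alternative: writing $\psi$ as $v\CDT E\mapsto v\otimes\eta$ requires dividing by $E$ to recover $v$, and both that division and $\eta$ flip sign with $E$; independence is only manifest once you observe that these two sign ambiguities cancel. The paper's formulation sidesteps this by never leaving the module $L_0\cdot E$.
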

\begin{proof}
%
Define a bilinear form
\begin{equation}
 \trzeroQ \times \wedge^2 \trzeroQ \rightarrow \Q; \quad
 (x,w) \mapsto (x \wedge w) \cdot \Omega, \quad
 \text{where} \quad
 \Omega := \frac{\DDz}{\tilde{e}_1 \wedge \tilde{e}_2 \wedge \tilde{e}_3}.
\end{equation}
Here $\Omega$ is independent of the basis $(E_i)$,
 that is, if we transform $(E_i)$ by $\varepsilon \in GL_3(\Q)$,
 then both the numerator and denominator of $\Omega$
 are multiplied by $\det(\varepsilon)^2$.
This bilinear form induces a map
\begin{equation}
 \psi \colon
 L_0 \cdot E \rightarrow {\OP{Hom}_\Q(\trzeroQ,\Q)}
 \CONGa {\wedge^2 \trzeroQ}; \quad
 v E \mapsto \ANG{-,v E}_B \mapsto w, \label{eq:def_psi1}
\end{equation}
 where $\ANG{-,v E}_B = (-\wedge w) \cdot \Omega$, that is,
\begin{equation}
 \ANG{y,v E}_B = (y \wedge w) \cdot \Omega
 \quad (\forall y \in \trzeroQ). \label{eq:def_psi2}
\end{equation}
When $v=E_i$ and $y=\dE_j E=\tilde{e}_j$, we should have
\begin{equation*}
 ( \tilde{e}_j \wedge w ) \cdot \Omega
 = \ANG{\dE_j E,E_i E}_B=\DDz \cdot \delta_{ij}
\end{equation*}
 by Lemma \ref{lem:v_dot_E}(1).
For example, if $i=1$,
 this implies
 $w=\tilde{e}_2 \wedge \tilde{e}_3$,
 i.e.\ $\psi(E_1 E)=\tilde{e}_2 \wedge \tilde{e}_3$.
Other cases are similar and the map $\psi$ satisfies (\ref{eq:psi_mapping}).
By the construction, $\psi$ does not depend on $(E_i)$.
\end{proof}

\begin{remark}
In Lemma \ref{lem:psi_image},
 if we equip the second exterior power $\wedge^2 \trzero$
 of the $\Q$-valued lattice $\trzero$
 with a natural bilinear form
 (by the theory of exterior product),
 the map $\psi$ is an isometry.
\end{remark}

\subsection{Lattice $W := \wedge^2 B \cong U^{\OSUM 3}$}

We consider $\wedge^2 B \cong \Z^6$ as a lattice by the following:
\begin{equation}
 W := \wedge^2 B, \quad
 \langle w_1,w_2 \rangle_W :=
 \frac{w_1 \wedge w_2}{\omega} \quad
 (w_1,w_2 \in W), \quad
 \omega := e_0 \wedge e_1 \wedge e_2 \wedge e_3.
\end{equation}
In this definition we do not use the structure of $B$
 as a quaternion algebra.
Similarly to $\Omega$ in the previous section,
 $\omega$ is independent of the choice of the basis $(E_i)$.
In fact, if we transform $(E_i)$ by $\varepsilon \in GL_3(\Z)$,
 then $\omega$ is multiplied by $\det(\varepsilon)^2=1$.
Set
\begin{equation}
 w=\sum p_{ij} \cdot e_i \wedge e_j \in W
 \quad \text{with} \quad
 (p_{01},p_{02},p_{03},p_{23},p_{31},p_{12}) \in \Z^6.
\end{equation}
Then we have
\begin{equation}
 \langle w,w \rangle_W=
 2( p_{01} p_{23}+p_{02} p_{31}+p_{03} p_{12} ).
\end{equation}
Thus $W \cong U^{\OSUM 3}$.
We define sublattices $P^+$ and $P^-$ of $W$ as follows:

\begin{lemma} \label{lem:def_Psi}
Define
\begin{equation}
 \Psi^{\pm} \colon L_0 \cdot E \rightarrow W\OverQ; \quad
 v E \mapsto e_0 \wedge v E \pm \psi(v E).
\end{equation} 
(For the definition of $\psi$,
 see Lemma \ref{lem:psi_image} and its proof.)
Set $P^{\pm} := \Psi^{\pm}(L_0 \cdot E)$.
Then the following hold:
\begin{itemize}
\item[(1)]
$P^+$ and $P^-$ are primitive sublattices of $W$; \\[-1.5ex]
\item[(2)]
$(P^\pm)^\bot=P^{\mp}$ in $W$; \\[-1.5ex]
\item[(3)]
$P^{\pm} \cong L_0(\pm 2) = L(\pm 1)$; \\[-1.5ex]
\item[(4)]
the map $L_0(\pm 2) \rightarrow P^{\pm}$ defined by
 $v \mapsto \Psi^{\pm}(v E)$
 is an isometry.
\end{itemize}
\end{lemma}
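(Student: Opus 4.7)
My plan is to carry out the verification by an explicit coordinate computation, organized around the $\Q$-linear decomposition
\[
 W_\Q = (e_0 \wedge \trzero_\Q) \oplus \wedge^2 \trzero_\Q.
\]
Both summands are totally isotropic ($e_0\wedge e_0=0$ for the first, and $\wedge^4\trzero_\Q=0$ for the second) and they are perfectly dually paired by $\ANG{\cdot,\cdot}_W$. Under this decomposition, $v\CDT E\mapsto e_0\wedge(v\CDT E)$ and the map $\psi$ of Lemma \ref{lem:psi_image} embed $L_0\CDT E$ into the two isotropic halves; $\Psi^\pm$ is the diagonal (resp.\ antidiagonal) embedding. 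The four assertions will follow from reading off coordinates in the $\Z$-basis $(e_i\wedge e_j)_{0\le i<j\le 3}$ of $W$.

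I would first write out $\Psi^\pm(E_i\CDT E)$ by combining Lemma \ref{lem:v_dot_E}(4) (which gives $E_i\CDT E=\sum_j (Q_{L_0})_{ij}\,\tilde{e}_j$), the substitution $\tilde{e}_j = e_j-\tfrac12\OP{Tr}(e_j)\,e_0$, and the explicit formula $\psi(E_i\CDT E)=\tilde{e}_{j}\wedge\tilde{e}_{k}$ (with $(i,j,k)$ a cyclic permutation of $(1,2,3)$). The resulting $6\times 3$ coordinate matrix of $P^\pm$ has integer entries, and its lower $3\times 3$ block (indexed by $e_2\wedge e_3,\,e_3\wedge e_1,\,e_1\wedge e_2$) is $\pm I_3$. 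This gives (1) at once: the projection of $W$ onto the last three coordinates restricts to an isomorphism $P^\pm\CONGa\Z^3$, so $P^\pm$ is a direct summand of $W$. For (3) and (4), I use the hyperbolic pairing: the defining relation $\ANG{y,v\CDT E}_B=(y\wedge\psi(v\CDT E))\cdot\Omega$ together with the relation between $\Omega$ (on $\wedge^3\trzero_\Q$) and the pairing $\ANG{e_0\wedge x,w}_W$ yields $\ANG{e_0\wedge(v\CDT E),\psi(v'\CDT E)}_W=\ANGz{v,v'}$, whence the isotropy of the two summands gives
\[
 \ANG{\Psi^\pm(v\CDT E),\Psi^\pm(v'\CDT E)}_W = \pm 2\ANGz{v,v'} = \pm\ANG{v,v'}.
\]
For (2), the analogous cross pairing reduces to $\ANGz{v,v'}-\ANGz{v',v}=0$ by the symmetry of $\ANGz{\cdot,\cdot}$, so $P^\mp\subseteq(P^\pm)^\perp$; combining primitivity (from (1)) with the rank count $3+3=\rank W$ forces $(P^\pm)^\perp=P^\mp$, since a primitive sublattice containing another primitive sublattice of the same rank must coincide with it.

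The one nontrivial technical step is verifying that $\Psi^\pm(E_i\CDT E)\in W$, not merely $W_\Q$. The coefficients $(Q_{L_0})_{ij}=\tfrac12(Q_L)_{ij}$ introduce half-integers into $e_0\wedge(E_i\CDT E)$, and the expansion of $\tilde{e}_j\wedge\tilde{e}_k$ introduces matching half-integers into its $e_0\wedge e_\bullet$ components. The essential observation is that in $\Psi^+$ these two sources of half-integers reinforce to recover the off-diagonal entries $s,t,u$ of $Q_L$, while in $\Psi^-$ the roles are swapped: some index pairs reinforce and others cancel, in a manner dictated by the cyclic structure of $\psi$. In either case an integer coordinate matrix emerges, and the rest of the lemma reduces to the matrix arithmetic indicated above.
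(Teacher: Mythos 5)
Your proposal is correct and follows essentially the same route as the paper: both compute $\Psi^{\pm}(E_i\CDT E)$ explicitly in the basis $(e_i\wedge e_j)$ (where the half-integer contributions from $Q_{L_0}=\tfrac12 Q_L$ and from expanding $\tilde{e}_j\wedge\tilde{e}_k$ combine to integer entries with lower block $\pm I_3$, giving primitivity), and both derive the pairing statements from the identity $e_0\wedge(v\CDT E)\wedge\psi(v'\CDT E)=\ANGz{v,v'}\cdot\omega$ together with the isotropy of the two summands. Your explicit completion of (2) via the rank count and primitivity of orthogonal complements is only making precise what the paper leaves implicit.
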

\begin{proof}
By Lemma \ref{lem:psi_image}, we have
\begin{equation}
 w_i^{\pm} := \Psi^{\pm}(E_i E)
 =e_0 \wedge E_i E \pm \tilde{e}_j \wedge \tilde{e}_k,
\end{equation}
 where $(i,j,k)=(1,2,3),(2,3,1),(3,1,2)$.
For example, 
 $w_1^+$ is computed as
\begin{equation*}
 w_1^+ = e_0 \wedge (a e_1+\dfrac{u}{2} e_2+\dfrac{t}{2} e_3)
    +(e_2-\dfrac{t}{2} e_0) \wedge (e_3-\dfrac{u}{2} e_0)
 = a e_{01}+u e_{02}+e_{23}.
\end{equation*}
(Recall that $\tilde{e}_i$ is defined as
 $e_i - \frac{1}{2} \OP{Tr}(e_i) \cdot e_0$.)
Other cases are similar and we obtain
\begin{equation} \label{eq:mat_w_i}
 \begin{pmatrix}
  w_1^+ \\ w_2^+ \\ w_3^+ \\ w^{-}_1 \\ w^{-}_2 \\ w^{-}_3
 \end{pmatrix}
 =
 \begin{pmatrix}
  a & u & 0 & 1 & 0 & 0\\
  0 & b & s & 0 & 1 & 0\\
  t & 0 & c & 0 & 0 & 1\\
  a & 0 & t & -1 & 0 & 0\\
  u & b & 0 & 0 & -1 & 0\\
  0 & s & c & 0 & 0 & -1
 \end{pmatrix}
 \begin{pmatrix}
  e_{01} \\ e_{02} \\ e_{03} \\ e_{23} \\ e_{31} \\ e_{12}
 \end{pmatrix},
 \quad \text{where} \quad
 e_{ij} := e_i \wedge e_j.
\end{equation}
In particular, we have $w_i^\pm \in W$.
From the shape of the matrix in (\ref{eq:mat_w_i}),
 the primitivity of $P^{\pm}$ in $W$ follows.
Thus the assertion (1) is proved.
For $v,v' \in L_0$, we have
\begin{align*}
 e_0 \wedge v E \wedge \psi(v' E)
 &= e_0 \wedge \frac{\ANG{v E,v' E}_B}{\Omega} \\
 &= \frac{\ANG{v E,v' E}_B}{\DDz} \cdot
  e_0 \wedge \tilde{e}_1 \wedge \tilde{e}_2 \wedge \tilde{e}_3 \\
 &= \ANGz{v,v'} \cdot \omega.
\end{align*}
(Recall that the map $\psi$ is defined by (\ref{eq:def_psi2}).)
This implies that $P^+$ and $P^-$ are orthogonal to each other
 because we have
\begin{equation*}
  \Psi^+(v E) \wedge \Psi^-(v' E)
  = e_0 \wedge v' E \wedge \psi(v E)
   -e_0 \wedge v E  \wedge \psi(v' E) \\
 =0.
\end{equation*}
Hence the assertion (2) follows.
Similarly the assertions (3) and (4) are verified.
(Alternatively, one can directly check the assertions (2)--(4) from
 (\ref{eq:mat_w_i}).)
\end{proof}

\subsection{Action of $B$ on $W$}
\label{subsect:hamilton}

Let $\mu$ denote the  two-sided action of $B$ on $W$ defined by
\begin{equation}
 \mu(x,y) \colon W \rightarrow W; \quad
 h_1 \wedge h_2 \mapsto x h_1 y \wedge x h_2 y,
\end{equation}
 where $x,y,h_1,h_2 \in B$.
Since $\langle ~,~ \rangle_W$ is defined by an identification
 $\wedge^4 B=\Z$,
 we have
\begin{equation}
 \langle \mu(x,y) \cdot w_1,\mu(x,y) \cdot w_2 \rangle_W
 =\OP{Nr}(x)^2 \cdot \OP{Nr}(y)^2 \cdot \langle w_1,w_2 \rangle_W
 \quad \text{for} \quad
 w_1, w_2 \in W.
\end{equation}
(For the property of $\OP{Nr}(~)$,
 see Section \ref{sect:def_quaternion}.)


\begin{lemma} \label{lem:action}
$P^+$ and $P^-$ are (respectively) stable under the action $\mu$ of $B$.
Moreover, we have
\begin{equation} \label{eq:left-right-lem}
 \mu(x,1) |_{P^+} = \OP{Nr}(x) \cdot \OP{id}_{P^+}, \quad
 \mu(1,x) |_{P^-} = \OP{Nr}(x) \cdot \OP{id}_{P^-}
 \quad \text{for} \quad x \in B.
\end{equation}
\end{lemma}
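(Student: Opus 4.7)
My plan is to prove Lemma \ref{lem:action} by first identifying $P^+_\C$ and $P^-_\C$ with the two canonical $(B_\C,B_\C^{\mathrm{op}})$-isotypic summands of $W_\C$, and then completing the identification by a direct computation on the explicit bases of $P^\pm$.

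The map $\mu(\cdot,1)$ is multiplicative in its first argument ($\mu(xy,1)=\mu(x,1)\mu(y,1)$) and quadratic in the sense that $\mu(\lambda x,1) = \lambda^2 \mu(x,1)$; analogous statements hold for $\mu(1,\cdot)$, and left and right actions commute. Extending scalars to $\C$, where $B_\C \cong M_2(\C) \cong V \otimes V^*$ with $V = \C^2$ the standard left module, I obtain the canonical decomposition
\[
 \wedge^2 B_\C = S^+ \oplus S^-, \qquad S^+ := \wedge^2 V \otimes \Sym^2 V^*, \quad S^- := \Sym^2 V \otimes \wedge^2 V^*,
\]
each summand of dimension three. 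On $S^+$, the left $B_\C$-action factors through $\wedge^2 V \cong \C$ and is therefore scalar multiplication by $\det = \OP{Nr}$; on $S^-$, analogously, the right action is scalar by $\OP{Nr}$. These are the unique rank-$3$ subspaces with this scalarity property.

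The key step is to establish $P^+_\C = S^+$ and $P^-_\C = S^-$. As each side is three-dimensional, it suffices to verify $P^+_\C \subseteq S^+$, i.e., that every $w \in P^+$ satisfies $\mu(x,1)(w) = \OP{Nr}(x)\,w$ for every $x \in B$. Both sides being quadratic in $x$, polarization reduces this to checking the identity for $x = e_i$ ($0 \leq i \leq 3$) and for each pairwise sum $x = e_i + e_j$; these are finite algebraic identities in the parameters $a,b,c,s,t,u$. Each check reduces, via the Clifford relations $E_iE_j + E_jE_i = \ANG{E_i, E_j}$ and $E_i^2 = \ANGz{E_i, E_i}$, the products $e_k \cdot e_i$ to explicit $\Z$-linear combinations of $\{e_0, e_1, e_2, e_3\}$; substituting these into the expansion $\mu(e_k,1)(w_m^+) = e_k e_i \wedge e_k e_j$ on the basis (\ref{eq:mat_w_i}) and collecting terms yields, after cancellation, the desired $\OP{Nr}(e_k)\cdot w_m^+$. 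The $S_3$-symmetry permuting $(E_1,E_2,E_3)$ substantially cuts down the number of genuinely independent checks, and the parallel right-action argument handles $P^-$.

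Finally, once $P^+_\C = S^+$ is established, stability of $S^+$ under $\mu(x,1)$ combined with the primitivity of $P^+$ in $W$ (Lemma \ref{lem:def_Psi}(1)) forces $\mu(x,1)(P^+) \subseteq P^+_\Q \cap W = P^+$, so the lattice $P^+$ itself is preserved, and the scalar identity $\mu(x,1)|_{P^+} = \OP{Nr}(x)\,\id_{P^+}$, being a $\Z$-linear identity of $W$-endomorphisms that holds after base change to $\C$, descends to $\Z$. The analogous argument using $\mu(1,x)$ finishes $P^-$. The main technical obstacle lies in the Clifford-algebra bookkeeping of the identification step, where the relations must be applied carefully to produce the cancellations reducing each $\mu(e_k,1)(w_m^+)$ to a scalar multiple; the representation-theoretic framework of the first step is what makes this tractable, since it pins down the correct answer in advance.
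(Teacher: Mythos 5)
Your proposal is correct, and its core coincides with the paper's: both proofs pass to $\C$, split $B_\C\cong M_2(\C)=\End(V)$ with $V=\C^2$, and identify $(P^+)_\C$ and $(P^-)_\C$ with the two three-dimensional summands $S^+:=\wedge^2V\otimes\Sym^2V^*$ and $S^-:=\Sym^2V\otimes\wedge^2V^*$ of $\wedge^2(V\otimes V^*)$, on which the left (resp.\ right) multiplication acts by the scalar $\det=\OP{Nr}$; the paper writes these summands concretely as the spans of $u_1\wedge u_3,\;u_2\wedge u_4,\;u_1\wedge u_4+u_2\wedge u_3$ and of $u_1\wedge u_2,\;u_3\wedge u_4,\;u_1\wedge u_4-u_2\wedge u_3$ in matrix units. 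Where you genuinely differ is in how the identification $(P^+)_\C=S^+$ is certified. The paper first changes basis over $\C$ to reduce to $L_0=(-1)^{\OSUM 3}$, justifying the reduction by the basis-independence of $\psi$ (Lemma \ref{lem:psi_image}), after which the check in the standard Hamilton model is a short computation. You instead keep the general Gram parameters $a,b,c,s,t,u$ and verify $\mu(e_k,1)\,w_m^+=\OP{Nr}(e_k)\,w_m^+$ on the basis (\ref{eq:mat_w_i}), using polarization in $x$ to reduce to finitely many identities; this is heavier bookkeeping (I checked the instance $x=e_1$, $w=w_1^+$, which does close up to $bc\,w_1^+$), but it is purely integral, so it neither leans on the basis-independence of $\psi$ nor actually needs your final descent step for the scalar identities, since $\OP{Nr}(x)\,P^+\subseteq P^+$ automatically. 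One expository gap to fill: the lemma asserts stability of $P^+$ under the full two-sided action, so you must also show $\mu(1,y)(P^+)\subseteq P^+$. This is where primitivity genuinely earns its keep in your setup: the decomposition $S^+\oplus S^-$ is preserved by both one-sided actions, so $\mu(1,y)$ preserves $(P^+)_\Q\cap W=P^+$. The paper reaches the same conclusion via $(P^-)^\bot=P^+$ together with a perturbation trick for non-invertible $y$.
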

\begin{proof}
We verify the statement over $\C$.
We may assume that
\begin{equation*}
 L_0=(-1)^{\OSUM 3},
 \quad \text{that is,} \quad
 a=b=c=-1, ~ s=t=u=0,
\end{equation*}
 because the change of the basis $(E_i)$ over $\C$
 does not change the $\C$-linear extension of the map $\psi$
 (see the proof of Lemma \ref{lem:psi_image}).
Define
\begin{equation*}
 H := B_\C
 = \C \oplus \C i \oplus \C j \oplus \C k, \quad
 i := e_1, ~ j:= e_2, ~ k := e_3,
\end{equation*}
 where $i^2=j^2=-1$ and $k=ij=-ji$.
By (\ref{eq:mat_w_i}),
 the basis $(w_i^{\pm})$ of $P^{\pm}$ is given by
\begin{equation*}
 w^{\pm}_1 = -1 \wedge i \pm j \wedge k, ~
 w^{\pm}_2 = -1 \wedge j \pm k \wedge i, ~
 w^{\pm}_3 = -1 \wedge k \pm i \wedge j.
\end{equation*}
(For example, one can directly check
\begin{equation*}
 \mu(1,x) \cdot (1 \wedge i+j \wedge k)
 =x \wedge ix+jx \wedge kx
 =1 \wedge i+j \wedge k
 \quad \text{for} \quad  x \in \{ i,j,k \}.
\end{equation*}
Hence the statement of the lemma holds for this case.)

We fix an identification $H=M_2(\C)$
 by the correspondence
\begin{equation*}
 1 \leftrightarrow \SqMat{1}{0}{0}{1}, ~
 i \leftrightarrow \SqMat{\sqrt{-1}}{0}{0}{-\sqrt{-1}}, ~
 j \leftrightarrow \SqMat{0}{1}{-1}{0}, ~
 k \leftrightarrow \SqMat{0}{\sqrt{-1}}{\sqrt{-1}}{0}.
\end{equation*}
Then, for $x=\SqMat{\alpha}{\beta}{\gamma}{\delta} \in M_2(\C)$,
 we have
\begin{equation*}
 \OP{Tr}(x)=\OP{trace}(x)=\alpha+\delta, \quad
 \OP{Nr}(x)=\det(x)=\alpha \delta-\beta \gamma, \quad
 \CLFinvo{x}=\SqMat{\delta}{-\beta}{-\gamma}{\alpha}.
\end{equation*}
Under the basis
 $(u_\nu)=\left(%
 u_1 := (\SqMatSmall{1}{0}{0}{0}), %
 u_2 := (\SqMatSmall{0}{1}{0}{0}), %
 u_3 := (\SqMatSmall{0}{0}{1}{0}), %
 u_4 := (\SqMatSmall{0}{0}{0}{1}) \right)$
 of $M_2(\C)$, we can write $w_1^+$ as
\begin{align*}
 w_1^+ &=  -1 \wedge i+j \wedge k \notag \\
  &=-(u_1+u_4) \wedge \sqrt{-1}(u_1-u_4)
   +(u_2-u_3) \wedge \sqrt{-1}(u_2+u_3) \\
  &=2 \sqrt{-1}(u_1 \wedge u_4+u_2 \wedge u_3). \notag
\end{align*}
Similarly, we have
 $w^+_2 = 2(u_1 \wedge u_3+u_2 \wedge u_4)$ and
 $w^+_3 = 2\sqrt{-1} (-u_1 \wedge u_3+u_2 \wedge u_4)$.
Hence
\begin{align*}
 (P^+)_\C &= \{
  u_1 \wedge u_3, ~
  u_2 \wedge u_4, ~
  u_1 \wedge u_4+u_2 \wedge u_3
 \}_\C, \\
 (P^-)_\C &= \{
  u_1 \wedge u_2, ~
  u_3 \wedge u_4, ~
  u_1 \wedge u_4-u_2 \wedge u_3
 \}_\C,
\end{align*}
 where $\{ ~ \}_\C$ means the span over $\C$ and
 the second equality follows from $P^-=(P^+)^\bot$.
This result implies (\ref{eq:left-right-lem})
 by a direct computation.
In fact, we have
\begin{align*}
 & \mu(x,1) \cdot (u_1 \wedge u_4+u_2 \wedge u_3) \\
 &= (\alpha u_1+\gamma u_3) \wedge (\beta u_2+\delta u_4)
 +(\alpha u_2+\gamma u_4) \wedge (\beta u_1+\delta u_3) \\
 &= (\alpha \delta-\beta \gamma) (u_1 \wedge u_4+u_2 \wedge u_3).
\end{align*}
Other cases are similar and omitted.
Therefore $P^+=(P^-)^\bot$ is stable under $\mu(1,y)$
 for $y \in B$ with $\OP{Nr}(y) \neq 0$.
In the case where $\OP{Nr}(y)=0$, we chose $\lambda \in \C$ such that
 $y_1 :=y - \lambda \cdot 1$ satisfies $\OP{Nr}(y_1) \neq 0$.
Then $\mu(1,y_1) \cdot P^+ \subseteq P^+$, thus
 $\mu(1,y) \cdot P^+ \subseteq P^+$.
Hence it follows that $P^+$ is stable under the action $\mu$.
One can apply a similar argument to $P^-$,
 and the lemma is proved.
\end{proof}

\subsection{Duality of $\CLFP{L}$ and $\CLFM{L}$} \label{subsect:dual}

Under the following pairing,
 $B=\CLFP{L}$ and $\CLFM{L}$ are dual to each other:
\begin{equation} \label{eq:duality_plus_minus}
 \CLFP{L} \times \CLFM{L} \rightarrow \Z; \quad
 (x,y) \mapsto (x,y)_E :=
 \frac{\tau(xy^*)}{E_1 \wedge E_2 \wedge E_3}.
\end{equation}
Here $\tau$ is the following natural projection:
\begin{equation}
 \tau \colon \CLF{L} \rightarrow
 \frac{\CLF{L}}{\text{the image of $\bigoplus_{k=0}^{2} L^{\otimes k}$}}
 \cong \wedge^3 L.
\end{equation}
The pairing $(~,~)_E$ has the ambiguity of sign,
 depending on $(E_i)$ (or $E$).
One can easily check
 the following intersection matrix of $(~,~)_E$,
 which implies the duality
 in (\ref{eq:duality_plus_minus}):
\begin{equation}
 ( \{ e_0,e_1,e_2,e_3 \} , \{ E_1 E_2 E_3,E_1,E_2,E_3 \} )_E =
 \begin{pmatrix}
 -1 & 0 & 0 & 0 \\
  * & 1 & 0 & 0 \\
  * & 0 & 1 & 0 \\
  * & 0 & 0 & 1
 \end{pmatrix}.
\end{equation}
(See also Remark \ref{rem:clf_minus_mat}.)
We can also write
 $(x,y)_E= \DDz^{-1} \cdot \ANG{x,y E}_B$.
In fact, Lemma \ref{lem:v_dot_E} implies that
\begin{equation}
 \DDz^{-1} \cdot \ANG{\tilde{e}_i,E_j E}_B
 = \delta_{ij} = (\tilde{e}_i,E_j)_E \qquad
 (E_0 := - E, ~ \tilde{e}_0 := e_0, ~ 0 \leq i,j \leq 3).
\end{equation}
Here we use $E^*=-E$.

Since $W=\wedge^2 \CLFP{L}$ is unimodular (or self-dual) as a lattice,
 the duality by the pairing $(~,~)_E$ induces an isomorphism
\begin{equation}
 \iota \colon
 \wedge^2 \CLFP{L}=W
 \cong W^\vee = \operatorname{Hom}(W,\Z)
 \cong \wedge^2 \CLFM{L} =: W',
\end{equation}
 which is independent of
 the choice of a $\Z$-basis $(E_i)_{1 \leq i \leq 3}$ of $L$.
For example,
 by the correspondence
 $\tilde{e}_0 \wedge \tilde{e}_1 \leftrightarrow %
 \tilde{e}_2 \wedge \tilde{e}_3  \leftrightarrow
 E_2 \wedge E_3$,
 we have
 $\iota(\tilde{e}_0 \wedge \tilde{e}_1)=E_2 \wedge E_3$.
We define a right action $\tilde{\mu}$ on $W$ by $\CLFM{L}^\times$:
\begin{equation}
 \tilde{\mu}(x) \colon W \rightarrow W; \quad
 h_1 \wedge h_2 \mapsto \iota^{-1}(h_1 x \wedge h_2 x),
\end{equation}
 where $x \in \CLFM{L}^\times$ and $h_1,h_2 \in \CLFP{L}$.

\begin{lemma} \label{lem:clifford_minus_action}
For $x \in \CLFM{L}^\times$, we have
\begin{equation}
 \tilde{\mu}(x) |_{P^+} = (-Nx) \cdot \eta_x, \quad
 \tilde{\mu}(x) |_{P^-} = (-Nx) \cdot \OP{id}_{P^-}.
\end{equation}
Here we identify $P^+$ and $L$ by $w_i^+=E_i$ and
 $\eta_x(v)=-x^{-1}vx$ for $v \in L$.
\end{lemma}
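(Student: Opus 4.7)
The strategy is to reduce the verification to the definite case $L_0 = (-1)^{\OSUM 3}$ and verify by direct computation, following the approach used in the proof of Lemma \ref{lem:action}. First, using the identity $Nx \cdot x^{-1} = x^*$, the right-hand side on $P^+$ rewrites as $v \mapsto x^* v x$, and on $P^-$ as $v \mapsto -x x^* \cdot v$; both sides of both identities are then polynomial of degree $2$ in $x \in \CLFM{L}$ (the left-hand side via the bilinearity of $h_1 x \wedge h_2 x$). Hence it suffices to verify the identities over $\C$, and by a $\C$-linear change of basis we may take $L_0 = (-1)^{\OSUM 3}$, in which case $E = E_1 E_2 E_3$, $E^2 = 1$, and $\DDz = -1$. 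By further polynomial continuation in $x$ together with coordinate symmetry, it then suffices to verify the identities for the single element $x = E_1 \in L$.

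Next, I would make $\iota$ fully explicit in this case. A direct computation of the reversal $*$ on $T(L)/I_L$ shows that the pairing matrix $((e_i, y_j)_E)_{0 \leq i, j \leq 3}$, with $y_0 := E_1 E_2 E_3$ and $y_k := E_k$ for $k = 1, 2, 3$, is $\OP{diag}(-1, 1, 1, 1)$. Combined with the orientation $\omega = e_0 \wedge e_1 \wedge e_2 \wedge e_3$ of $W$, this pins down $\iota$ completely on the basis $\{e_i \wedge e_j\}$: each $e_a \wedge e_b$ maps to $\pm y_c \wedge y_d$ where $\{a, b, c, d\} = \{0, 1, 2, 3\}$, with signs determined by the $\omega$-pairing. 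In particular, one reads off $\iota(e_0 \wedge e_1) = E_2 \wedge E_3$, $\iota(e_0 \wedge e_2) = E_3 \wedge E_1$, $\iota(e_0 \wedge e_3) = E_1 \wedge E_2$, and the three ``dual'' wedges.

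Finally, for $x = E_1$ one has $NE_1 = -1$, $\eta_{E_1}(v) = E_1 v E_1$, and the products $(e_0, e_1, e_2, e_3) \cdot E_1$ compute to $(E_1, E_1 E_2 E_3, -E_3, E_2)$. Forming all pairwise wedges and applying the explicit $\iota^{-1}$ from the previous step yields $\tilde\mu(E_1)$ on each basis wedge of $W$. Using the expressions $w_i^{\pm} = -e_{0i} \pm e_{jk}$ with $(i, j, k)$ cyclic in the definite case, one then checks that $\tilde\mu(E_1)$ preserves both $P^+$ and $P^-$, that $\tilde\mu(E_1)|_{P^+}$ matches $-N E_1 \cdot \eta_{E_1}$ (with $\eta_{E_1}$ the reflection $E_1 \mapsto -E_1$, $E_2, E_3 \mapsto E_2, E_3$), and that $\tilde\mu(E_1)|_{P^-}$ is the identity, as required. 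The main obstacle will be the careful bookkeeping of signs in computing $\iota$ and $\iota^{-1}$ on the basis of $W$; the preservation of $P^{\pm}$ is not built into the formulation of the lemma but drops out of the same computation.
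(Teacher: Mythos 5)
Your overall plan --- reduce to a single anisotropic vector $x=E_1$ in an orthogonal basis, make $\iota$ explicit via the pairing $(~,~)_E$, and finish by direct computation --- is the same as the paper's, and your terminal computation is correct: the products $e_i E_1=(E_1,E_1E_2E_3,-E_3,E_2)$, the pairing matrix $\OP{diag}(-1,1,1,1)$, and the resulting table for $\tilde{\mu}(E_1)$ all match the paper's proof, as does the observation that stability of $P^{\pm}$ falls out of the same computation. The problem is the reduction step. Both sides are indeed quadratic in $x$, but coordinate symmetry only moves $E_1$ around inside the $3$-dimensional subspace $L_\C$ of the $4$-dimensional space $\CLFM{L}_\C$, so polynomial continuation starting from the orbit of $E_1$ establishes the identity only for $x \in L_\C$. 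A quadratic identity that holds on a hyperplane need not hold on the ambient space, and the lemma genuinely concerns odd units outside $L$: for instance $E_1E_2E_3$, or the element $5E_2+E_3+E_1E_2E_3$ of Section \ref{subsect:appendix_exm}. As written, your argument does not reach these.

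The missing ingredient is multiplicativity. One observes that the asserted identity is stable under composition: $\tilde{\mu}(x)\circ\tilde{\mu}(y)=\tilde{\mu}(yx)$ and $\eta_x\circ\eta_y=\eta_{yx}$ (with $N(yx)=Ny\cdot Nx$, and with the even case supplied by Lemma \ref{lem:action}), and then uses that every element of $\CLFM{L}_\Q^\times$ is a product of anisotropic vectors of $L_\Q$ --- for rank $3$ every invertible odd element normalizes $L_\Q$, since writing $x=\beta E$ with $\beta\in B_\Q^\times$ reduces this to the fact that conjugation by $\beta$ preserves the trace-zero part of $B_\Q$, and the Lipschitz group is generated by anisotropic vectors. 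With that observation, your single verification at $x=E_1$ for an arbitrary orthogonal basis (equivalently, for every anisotropic $v\in L_\Q$) does suffice, and this is exactly how the paper closes the argument. Your remaining deviations from the paper --- passing to $\C$ and normalizing to $L_0=(-1)^{\OSUM 3}$ rather than just taking an orthogonal basis over $\Q$ --- are harmless and mirror what the paper does in the proof of Lemma \ref{lem:action}.
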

\begin{proof}
We can work over $\Q$ and
 may assume that $(E_i)_{1 \leq i \leq 3}$ is an orthogonal basis of $L_\Q$
 by transforming it under $SL_3(\Q)$.
Then $E_0 := -E = E_1 E_2 E_3$.
Observe the compatibility of the assertion in the sense that
 $\tilde{\mu}(x) \circ \tilde{\mu}(y) = \tilde{\mu}(yx)$
 and
 $\eta_x \circ \eta_y = \eta_{yx}$ hold.
Hence it is enough to check the assertion for $x=E_1$ 
 because any element in $\CLFM{L}^{\times}$ is written as
 the product of some elements in $L_\Q$.
The action $\tilde{\mu}(E_1)$ under the bases
 $(e_i \wedge e_j)$ and $(E_i \wedge E_j)$
 is given by the following:
\begin{equation*}
 \begin{array}{ccrcr}
  e_0 \wedge e_1 & \mapsto & E_0 \wedge E_1     & \leftrightarrow & e_2 \wedge e_3  \\
  e_0 \wedge e_2 & \mapsto & -a E_3 \wedge E_1  & \leftrightarrow & -a e_0 \wedge e_2 \\
  e_0 \wedge e_3 & \mapsto & -a E_1 \wedge E_2  & \leftrightarrow & -a e_0 \wedge e_3 \\
  e_2 \wedge e_3 & \mapsto & a^2 E_2 \wedge E_3 & \leftrightarrow & a^2 e_0 \wedge e_1 \\
  e_3 \wedge e_1 & \mapsto & -a E_0 \wedge E_2  & \leftrightarrow & -a e_3 \wedge e_1 \\
  e_1 \wedge e_2 & \mapsto & -a E_0 \wedge E_3  & \leftrightarrow & -a e_1 \wedge e_2 
 \end{array}
\end{equation*}
(For example, the first line means
 $e_0 E_1 \wedge e_1 E_1=E_0 \wedge E_1$
 and $\iota^{-1}(E_0 \wedge E_1)=e_2 \wedge e_3$.)
From this, the assertion for $x=E_1$ directly follows.
(See (\ref{eq:mat_w_i}) for $w_i^\pm$.)
\end{proof}

\begin{remark} \label{rem:clf_minus_mat}
(i)
One can also define a left action on $W$ by $\CLFM{L}^\times$
 in a similar manner.
(ii)
$\CLFM{L}^\times$ may or may not be an empty set.
(See Section \ref{subsect:appendix_exm}
 for an example of an  element in $\CLFM{L}^\times$.)
(iii)
By direct computations, it follows that
 the intersection matrix of $\CLFM{L}$
 with the quadratic form $N x=x x^*$,
 for the basis $(E_1 E_2 E_3,E_1,E_2,E_3)$,
 is given by
\begin{equation}
 \frac{1}{2}
 \begin{pmatrix}
 2 a b c & a s & s u-b t & c u \\
 a s & 2 a & u & t \\
 s u-b t & u & 2 b & s \\
 c u & t & s & 2 c
 \end{pmatrix}.
\end{equation}
If we take another basis $(-E_1 E_2 E_3-t E_2,E_1,E_2,E_3)$,
 the intersection matrix is given by
\begin{equation}
 \frac{1}{2}
\begin{pmatrix}
 2 ( s t u+a b c)  & -( t u+a s)  & -( s u+b t)  &  -( c u+s t) \\
 -( t u+a s)  & 2 a & u & t \\
 -( s u+b t)  & u & 2 b & s \\
 -( c u+s t)  & t & s & 2 c
 \end{pmatrix}
 = \DDz \cdot Q_B^{-1}.
\end{equation}
Hence the second basis of $\CLFM{L}$
 is the dual basis of the basis $(e_0,e_1,e_2,e_3)$ of $\CLFP{L}$
 with respect to $(~,~)_E$.
\end{remark}

\subsection{Proof of Theorem \ref{thm:MAIN}}

We have constructed the following objects:
\begin{itemize}
\item[$\circ$]
A lattice $W := \wedge^2 B \cong U^{\OSUM 3}$
 with a two-sided action $\mu$ by $B$ (as a $\Z$-module)
\item[$\circ$]
Primitive sublattices $P^+$, $P^- \subseteq W$;
 $(P^{\pm})^\bot=P^{\mp}$, $P^{\pm} \cong L_0(\pm 2)$
\item[$\circ$]
An element
 $E \in \CLFM{L}_\Q$ in the center of $\CLF{L}$;
 $E$ is independent of the basis $(E_i)$ of $L$, up to sign;
 $E^2=-\DDz$, $E^*=-E$ 
\item[$\circ$]
 $L_0 \cdot E := \{ v E \bigm| v \in L_0 \} \subset B_\Q$
\item[$\circ$]
 $\psi \colon L_0 \cdot E \rightarrow W\OverQ$,
 which is independent of $(E_i)$
\item[$\circ$]
$\Psi^{\pm} \colon L_0 \cdot E \rightarrow P^{\pm}$,
 $\Psi^{\pm}(v E) = e_0 \wedge v E \pm \psi(v E)$
\item[$\circ$]
An isometry
 $L_0(\pm 2) = L(\pm 1) \rightarrow P^{\pm}$,
 which is defined by
 $v \mapsto \Psi^{\pm}(v E)$
\end{itemize}

\begin{lemma}[cf.\ {\cite[\S 1]{Shioda}}] \label{lem:sl4_map}
Let $V$ 
 be a free $\Z$-module of rank $4$.
We fix an identification $\wedge^4 V=\Z$
 and consider $\sW := \wedge^2 V$ as a lattice,
 equipped with the bilinear form
 $\sW \times \sW \rightarrow \Z$ 
 defined by
 $(x,y) \mapsto x \wedge y$.
Then the natural map
\begin{equation}
 PSL(V) \cong PSL_4(\Z) \rightarrow SO^+(\sW)
\end{equation}
 is an isomorphism.
In particular,
 for $\varphi \in GL(V_\R)$ and $V' := \varphi(V)$,
 if $\wedge^2 V'=\sW$ in $\sW_\R$, then $V'=V$.
\end{lemma}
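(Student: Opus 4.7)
The plan is first to verify that $\varphi \mapsto \wedge^2 \varphi$ gives a well-defined injective homomorphism $PSL_4(\Z) \to SO^+(\sW)$, then to establish the ``in particular'' statement directly, and finally to deduce surjectivity from it together with the classical real Lie group isomorphism $PSL_4(\R) \cong SO^+(\sW_\R)$.

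For well-definedness and injectivity: for $\varphi \in SL_4(\Z)$, the identity $(\wedge^2 \varphi)(x) \wedge (\wedge^2 \varphi)(y) = (\wedge^4 \varphi)(x \wedge y) = \det(\varphi)(x \wedge y) = x \wedge y$ shows $\wedge^2 \varphi$ preserves the form, and $\det(\wedge^2 \varphi) = (\det \varphi)^3 = 1$ combined with the connectedness of $SL_4(\R)$ places the image in the identity component $SO^+(\sW_\R)$, hence in $SO^+(\sW)$. If $\wedge^2 \varphi = \id_\sW$, then $\varphi(v) \wedge V = v \wedge V$ for every $v \in V$, forcing $\varphi(v) \in \R v$; so $\varphi$ is a scalar $\lambda$, and $\lambda^4 = 1$ with $\lambda \in \Z$ gives $\lambda = \pm 1$.

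Next I would prove the ``in particular'' statement directly, since it is logically needed in the surjectivity step. Write $V' = \varphi(V)$; then $\wedge^2 \varphi \in GL(\sW)$ gives $(\det \varphi)^3 = \pm 1$, so $\det \varphi = \pm 1$, and composing with a reflection of $V$ if necessary I may assume $\det \varphi = 1$. I would first show $V_{\Q} = V'_{\Q}$: decomposable elements of $\wedge^2 V_{\Q}$ cut out the $V_{\Q}$-rational $2$-planes in $V_\R$, and the equality $\wedge^2 V_{\Q} = \wedge^2 V'_{\Q}$ makes the set of rational $2$-planes coincide for both structures. Taking a $\Q$-basis $(v_i)$ of $V_{\Q}$ and writing $\lambda_i v_i$ for the primitive $V'_{\Q}$-vector on each line $\R v_i$, a Plücker-coordinate computation inside the plane $\R v_i + \R v_j$ forces $\lambda_i/\lambda_j \in \Q$; hence $V'_{\Q} = \lambda_1 V_{\Q}$, and the relation $\wedge^2 V'_{\Q} = \lambda_1^2 \wedge^2 V_{\Q} = \wedge^2 V_{\Q}$ reduces $\lambda_1$ to $\pm 1$, so $V_{\Q} = V'_{\Q}$.

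Given $V_{\Q} = V'_{\Q}$, for each primitive $v \in V$ the line $\R v$ meets $V'$ in $\Z v^*$ with $v^* = \lambda_v v$ for some $\lambda_v \in \Q^\times$, and symmetrically for $V' \to V$ with reciprocal constants. For two independent primitive $v, w \in V$ with $\Z v + \Z w$ saturated in $V$, the vector $v \wedge w$ is primitive in $\sW$, while $v^* \wedge w^* = \lambda_v \lambda_w (v \wedge w)$ lies in $\wedge^2 V' = \sW$, so $\lambda_v \lambda_w \in \Z$. The symmetric argument run from $V'$ gives $(\lambda_v \lambda_w)^{-1} \in \Z$, whence $\lambda_v \lambda_w = \pm 1$; iterating over triples forces $\lambda_v = \pm 1$ for every primitive $v \in V$, so $\pm v \in V'$ and by symmetry $V = V'$.

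For surjectivity, I would use the classical real isomorphism $PSL_4(\R) \cong SO^+(\sW_\R)$ arising from the exceptional isogeny $A_3 = D_3$ (the half-spin representation realising $SL_4$ as $\OP{Spin}(3,3)$). Given $g \in SO^+(\sW)$, lift to $\tilde{g} \in SL_4(\R)$, uniquely up to sign. Then $\tilde{g}(V) \subset V_\R$ is a rank-$4$ lattice with $\wedge^2 \tilde{g}(V) = g(\sW) = \sW$, so the ``in particular'' statement yields $\tilde{g}(V) = V$; hence $\tilde{g} \in GL_4(\Z) \cap SL_4(\R) = SL_4(\Z)$ maps to $g$. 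The hard part will be the ``in particular'' step: extracting both the rational rigidity ($V_{\Q} = V'_{\Q}$) and the integral rigidity ($\lambda_v = \pm 1$) from the sole hypothesis that $\wedge^2 V$ and $\wedge^2 V'$ agree as sublattices of $\sW_\R$. Once this reconstruction of $V$ from $\sW$ is in hand, surjectivity follows formally from the real-analytic isogeny.
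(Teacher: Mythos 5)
Your strategy is genuinely different from the paper's: the paper gets both surjectivity and the ``in particular'' statement in one stroke from the classical fact that $SL_4(\Z)$ is a \emph{maximal} discrete subgroup of $SL_4(\R)$, combined with $PSL_4(\R) \cong SO^+(\sW_\R)$, whereas you try to prove the lattice rigidity ($\wedge^2 V' = \sW \Rightarrow V'=V$) by hand and then deduce surjectivity formally. The frame is viable, and your well-definedness, injectivity and surjectivity steps are fine, but the rigidity argument has a genuine gap at its decisive step.

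The problem is the sentence ``the symmetric argument run from $V'$ gives $(\lambda_v\lambda_w)^{-1}\in\Z$.'' To run your first argument from the $V'$ side you need $v^*=\lambda_v v$ and $w^*=\lambda_w w$ to span a \emph{saturated} rank-$2$ sublattice of $V'$, so that $v^*\wedge w^*$ is primitive in $\wedge^2 V'=\sW$. Individual primitivity of $v^*$ and $w^*$ in $V'$ does not give this, and saturation of the pair in $V'$ is essentially equivalent to what you are proving: without it you only get $|\lambda_v\lambda_w|=[(\R v+\R w)\cap V':\Z v^*+\Z w^*]\in\Z_{\geq 1}$, the same one-sided bound as before, and the further relations you have ($|\lambda_i\lambda_j|\in\Z_{\geq 1}$ for all pairs from a basis, $|\lambda_1\lambda_2\lambda_3\lambda_4|=[V':\bigoplus\Z\lambda_i v_i]\geq 1$) all point in the same direction. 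That the step cannot be justified from the hypotheses you actually use is shown by rank $2$: for $V=\Z^2$ and $V'=\OP{diag}(p,p^{-1})V$ one has $\wedge^2 V'=\wedge^2 V$, the saturated pair $v=e_1$, $w=e_1+e_2$ has $\lambda_v\lambda_w=p^2$, and $(\lambda_v\lambda_w)^{-1}\notin\Z$. A secondary slip is the claim that ``$\wedge^2 V'_\Q=\lambda_1^2\wedge^2 V_\Q=\wedge^2 V_\Q$ reduces $\lambda_1$ to $\pm 1$'': scaling a $\Q$-vector space by any scalar with rational square does not change it, so this only reproves $\lambda_1^2\in\Q^\times$ and does not exclude an irrational $\lambda_1$. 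Both defects are repaired by localizing: for each prime $p$ choose a common elementary-divisor basis $(f_i)$ with $V_p=\bigoplus\Z_p f_i$ and $V'_p=\bigoplus\Z_p\, p^{a_i}f_i$; then every pair is saturated in both lattices simultaneously, $\wedge^2 V'_p=\wedge^2 V_p$ forces $a_i+a_j=0$ for all $i<j$, hence (here rank $\geq 3$ enters) all $a_i=0$, so $V'_p=V_p$ for every $p$ and $V'=V$. With that replacement your surjectivity argument goes through; alternatively, the paper's one-line appeal to the maximality of $SL_4(\Z)$ bypasses the reconstruction entirely.
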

\begin{proof}
It is classically known that $SL_n(\Z)$
 is a maximal discrete subgroup of $SL_n(\R)$
 (see e.g.\ \cite{stack}).
Combined with the natural isomorphism
 $PSL(V_\R) \cong SO^+(\sW_\R)$,
 this fact (for $n=4$) implies the lemma.
 \end{proof}

\begin{proof}[Proof of Theorem {\ref{thm:MAIN}}]
We divide the proof into two cases
 according to the sign of the determinant of $g \in O(L)$.
\\

{\bf Case I:} $\det(g)=1$.
By Proposition \ref{prop:funct_cliff},
 any $g \in SO(L)$
 is of the form $g=g_\alpha$ with $\alpha \in B\OverQ^\times$
 satisfying $\alpha L \alpha^{-1}=L$,
 where $g_\alpha$ is defined as the map
 $v \mapsto \alpha v \alpha^{-1}$.
Set
\begin{equation*}
 \nu := |\OP{Nr}(\alpha)|^{1/2}, \quad
 \alpha_0 := \alpha/\nu \in B_\R, \quad
 \varepsilon := \OP{Nr}(\alpha_0) \in \{ \pm 1 \}.
\end{equation*}
We extend the action $\mu$ linearly to that over $\R$.
Then we have $\mu(\alpha,\alpha^{-1})=\mu(\alpha_0,\alpha_0^{-1})$.
By Lemma \ref{lem:def_Psi},
 the following map is an isometry:
\begin{equation*}
 \lambda^+ \colon
 L_0(2)=L \rightarrow P^+; \quad
 v \mapsto  \Psi^{+}(v E)
 = e_0 \wedge v E + \psi(v E).
\end{equation*}

{\bf Claim 1:}
 $\lambda^+ \circ g_\alpha \circ (\lambda^+)^{-1} = \mu(\alpha,\alpha^{-1})|_{P^+}$.
Proof:
By the construction of the map
 $\psi$ 
 (see the proof of Lemma \ref{lem:psi_image}), we have
\begin{equation*}
 \psi( g_\alpha(v) \cdot E)
 = \psi( \alpha v \alpha^{-1} \cdot E)
 = \psi( \alpha \cdot v E \cdot \alpha^{-1} )
 = \mu(\alpha,\alpha^{-1}) \cdot \psi(v E)
\end{equation*}
for $v \in L$.
Thus
\begin{equation*}
 \lambda^+( g_\alpha(v) )
 = e_0 \wedge ( g_\alpha(v) \cdot E )+\psi( g_\alpha(v) \cdot E ) \\
 = \mu(\alpha,\alpha^{-1}) \cdot \lambda^+(v). 
\end{equation*}
Claim 1 immediately follows from this.

{\bf Claim 2:}
 $\alpha \in B^\times ~ \Rightarrow ~ g_\alpha \in SO_\GAM(L)$.
Proof:
Assume $\alpha \in B^\times$.
Then $\varepsilon = \OP{Nr}(\alpha)$.
By Lemma \ref{lem:action}, we have
\begin{gather*}
 \mu(\alpha,\alpha^{-1})|_{P^+}
 = \varepsilon \cdot \mu(1,\alpha^{-1})|_{P^+}
 = \sigma|_{P^+}, \\ 
 \text{where} \quad
 \sigma := \varepsilon \cdot \mu(1,\alpha^{-1}) \in O(W).
\end{gather*}
On the other hand, we have
 $\sigma|_{P^-} = \id_{P^-}$,
 again by Lemma \ref{lem:action}.
Recall that $P^+$ and $P^-$ are primitive sublattices of
 the even unimodular lattice $W$
 satisfying $(P^+)^\bot=P^-$.
By the theory of discriminant forms \cite{N},
 we have a natural isomorphism $A(P^+) \cong A(P^-)$,
 which is compatible with the action of $\sigma$.
Hence $\sigma$ acts on $A(P^+)$ trivially.
Thus $g_\alpha \in SO_\GAM(L)$ by Claim~1.

{\bf Claim 3:}
 $g_\alpha \in SO_\GAM(L) ~ \Rightarrow ~ \alpha_0 \in B^\times$.
Proof:
Assume $g_\alpha \in SO_\GAM(L)$.
By Claim 1,
 the action of $\mu(\alpha,\alpha^{-1})$
 on $A(P^+)$ is trivial.
Similarly to Claim 2, we have
\begin{gather*}
 \mu(\alpha,\alpha^{-1})|_{P^+}
 =\mu(\alpha_0,\alpha_0^{-1})|_{P^+}
 =\varepsilon \cdot \mu(1,\alpha_0^{-1})|_{P^+}
 =\tau|_{P^+}, \\
 \text{where} \quad
 \tau := \varepsilon \cdot \mu(1,\alpha_0^{-1}) \in O(W_\R).
\end{gather*}
By Lemma \ref{lem:action},
 $\tau$ acts on $(P^-)_\R$ trivially.
Hence $\tau$ preserves $W$
 (by the theory of discriminant forms),
 and so does $\mu(1,\alpha_0^{-1})$.
Now Lemma \ref{lem:sl4_map} implies
 $B \cdot \alpha_0^{-1}=B$.
Thus we have $\alpha_0 \in B^\times$
 by $\alpha_0 \in B$ and $\OP{Nr}(\alpha_0)= \pm 1$.
%
\\

{\bf Case II:} $\det(g)=-1$.
The proof for this case is similar to Case I
 and we give only an outline.
Any $g \in O(L)$ with $\det(g)=-1$
 is of the form $-g_\alpha$,
 where $g_\alpha$ is as in Case I.
Hence
\begin{equation*}
 g=- g_\alpha=\eta_\beta := (v \mapsto - \beta^{-1} v \beta),
 \quad \beta := \alpha^{-1} \cdot E \in \CLFM{L}_\Q^\times.
\end{equation*}
(Recall that $E \in \CLFM{L}^\times_\Q$
 is an element in the center of $\CLF{L}$.)
We use $\tilde{\mu}$ defined in Section \ref{subsect:dual}.

(i)
Assume
\begin{equation*}
 \beta \in \CLFM{L}^\times
 = \{ \beta \in \CLFM{L} \bigm| N \beta=\pm 1 \}.
\end{equation*} 
Then we have $\beta^{-1} L \beta=L$, and hence $\eta_\beta \in O(L)$,
 because $L$ is orthogonal to $E$ in $\CLFM{L}_\Q$
 with respect to the quadratic form $N x=x x^*$.
By Lemma \ref{lem:clifford_minus_action},
 $\sigma := (-N \beta) \cdot \tilde{\mu}(\beta) \in O(W)$
 acts on $P^+ \cong L$ as $\eta_\beta$ and on $P^-$ as the identity.
Hence $\eta_\beta \in O_\GAM(L)$.

(ii)
Conversely, assume $\eta_\beta \in O_\GAM(L)$.
Set
\begin{equation*}
 \beta_0 := \beta/|N \beta|^{1/2}, \quad
 \tau := (-N \beta_0) \cdot \tilde{\mu}(\beta_0) \in O(W_\R).
\end{equation*}
Here we extend $\tilde{\mu}$ linearly over $\R$.
By Lemma \ref{lem:clifford_minus_action},
 $\tau$ acts on $P^+$ as $\eta_{\beta_{0}}=\eta_\beta$
 and on $P^-$ as the identity.
Thus $\tau(W)=W$.
By Lemma \ref{lem:sl4_map},
 we have $\CLFP{L} \cdot \beta_0 = \CLFM{L}$,
 and hence $\beta_0 \in \CLFM{L}^\times$.
\end{proof}

\begin{remark}
This proof is motivated by the geometry of
 a certain kind of $2$-dimensional complex tori, 
 which is sometimes called QM (quaternion multiplication) type.
\end{remark}

\section{Application to $U(k) {\OSUM} \LL{2l}$} \label{sect:LAT_kl}

In this section,
 we apply the results
 in the previous section 
 to study the orthogonal group of the following lattice $\LAT$:
\begin{equation}
 \LAT :=
 \begin{pmatrix}
  0 & 0 & k \\
  0 & 2l & 0 \\
  k & 0 & 0
 \end{pmatrix}
 \cong U(k) {\OSUM} \LL{2l}.
\end{equation}
Here $k$ and $l$ are non-zero integers.
We identify the orthogonal groups of $\LAT$ and $\LAT_0 := \LAT(1/2)$
 in a natural way.

\subsection{Even Clifford algebra $\OP{Cl}^+(\LAT)$} \label{subsect:U_kl}

In our setting,
 $\OP{Cl}^+(\LAT)$
 is isomorphic to the $\Z$-subalgebra of
 $M_4(\Z)$ spanned by
\begin{equation*}
 M_0=I_4, ~
 M_1=
 \begin{pmatrix}
  * & * & 0 & 0\\
  * & * & 0 & 0\\
  * & * & 0 & l\\
  * & * & 0 & 0
 \end{pmatrix}, ~
 M_2=
 \begin{pmatrix}
  * & * & 0 & 0\\
  * & * & 0 & 0\\
  * & * & k & 0\\
  * & * & 0 & 0
 \end{pmatrix}, ~
 M_3=
 \begin{pmatrix}
  * & * & 0 & 0\\
  * & * & 0 & 0\\
  * & * & 0 & 0\\
  * & * & k & 0
 \end{pmatrix},
\end{equation*}
 where each $M_i$ corresponds to $e_i$
 (see (\ref{eq:mat_e1})--(\ref{eq:mat_e3})).
Hence
\begin{equation} \label{eq:clf_LAT_B}
 \CLFP{\LAT} \cong \clfC :=
 \{ \alpha = \SqMat{a}{b}{c}{d} \in M_2(\Z) \bigm| 
 a-d \equiv c \equiv 0 \bmod{k}, ~
 b \equiv 0 \bmod{l}
 \}.
\end{equation}
In what follows,
 we identify $\OP{Cl}^+(\LAT)$ with $\clfC$ by
\begin{equation}
 e_1 = \SqMat{0}{l}{0}{0}, \quad
 e_2 = \SqMat{k}{0}{0}{0}, \quad
 e_3 = \SqMat{0}{0}{k}{0}.
\end{equation}
Then we have
 $\OP{Nr}(\alpha)=\det(\alpha)$, and hence
 $\clfC^\times = \clfC \cap GL_2(\Z)$
 and $\clfC^1 = \clfC \cap SL_2(\Z)$.
In this setting,
 Theorem \ref{thm:MAIN} implies the following:
\begin{proposition} \label{prop:even_clifford_LAT}
We have an isomorphism
\begin{equation}
 \PGB{\clfC} := \clfC^\times / \{ \pm 1 \}
 \CONGa
 SO_\GAM(\LAT); \quad
 \alpha \mapsto (\sigma \mapsto \alpha \sigma \alpha^{-1})
\end{equation}
Moreover, $\PSB{\clfC} := \clfC^1 / \{ \pm 1 \}$
 corresponds to $SO^+_\GAM(\LAT)$ under this isomorphism.
\end{proposition}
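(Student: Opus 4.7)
The plan is to deduce this proposition as a direct specialization of Corollary~\ref{cor:clifford_plus} to the lattice $L = \LAT$, once the ring identification $\OP{Cl}^+(\LAT) \cong \clfC$ of (\ref{eq:clf_LAT_B}) has been pinned down.

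First I would verify the identification $\OP{Cl}^+(\LAT) \cong \clfC$. For $L=\LAT$, the intersection matrix corresponds to the parameters $a=c=0$, $b=l$, $s=u=0$, $t=k$ in the conventions of Section~\ref{sect:cliff}. Substituting these into the explicit matrices $M_1, M_2, M_3$ of (\ref{eq:mat_e1})--(\ref{eq:mat_e3}) and comparing with the $4$-by-$4$ block pattern displayed at the start of Section~\ref{subsect:U_kl} exhibits $\OP{Cl}^+(\LAT)$ as a free $\Z$-algebra of rank $4$ which one recognizes as $\clfC \subseteq M_2(\Z)$ via the assignment of $e_1, e_2, e_3$ given in the text. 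A direct check of the defining Clifford relations $e_i e_j + e_j e_i = \langle E_i, E_j\rangle$ then confirms that this is a $\Z$-algebra isomorphism.

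Next, I would observe that under this identification the reduced norm becomes the $2\times 2$ determinant. Indeed, the Clifford conjugation $\CLFinvo{\alpha}$ restricted to $\clfC$ is a $\Z$-algebra anti-involution with fixed subring $\Z\cdot 1$, which forces it to coincide with matrix adjugation on $M_2(\Z)$ by uniqueness of the standard involution on a quaternion algebra. Hence $\OP{Nr}(\alpha) = \alpha\CLFinvo{\alpha} = \det(\alpha)$ on $\clfC$, so that $\OP{Cl}^+(\LAT)^\times$ is identified with $\clfC^\times = \clfC \cap GL_2(\Z)$ and $\OP{Cl}^+(\LAT)^1$ with $\clfC^1 = \clfC \cap SL_2(\Z)$.

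Finally, applying Corollary~\ref{cor:clifford_plus} to the non-degenerate even rank-$3$ lattice $\LAT$ yields the isomorphisms $SO_\GAM(\LAT) \cong \OP{Cl}^+(\LAT)^\times/\{\pm 1\}$ and $SO^+_\GAM(\LAT) \cong \OP{Cl}^+(\LAT)^1/\{\pm 1\}$, realized by the conjugation action $\alpha \mapsto (\sigma \mapsto \alpha \sigma \alpha^{-1})$ in $\CLF{\LAT}$. Transporting along the identification above gives exactly the statement of the proposition. The main (though modest) technical point is the verification that $\clfC$ is precisely the image of $\OP{Cl}^+(\LAT)$ under the matrix representation, which reduces to the bookkeeping above; everything else is formal consequence of the general theorem.
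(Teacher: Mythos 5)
Your proposal is correct and follows essentially the same route as the paper, which likewise specializes the matrices $M_1,M_2,M_3$ to the parameters $a=c=s=u=0$, $b=l$, $t=k$ to obtain the identification $\OP{Cl}^+(\LAT)\cong\clfC$ with $\OP{Nr}=\det$, and then simply invokes Theorem~\ref{thm:MAIN} (via Corollary~\ref{cor:clifford_plus}) for the conjugation-action isomorphisms onto $SO_\GAM(\LAT)$ and $SO^+_\GAM(\LAT)$. One small slip: the defining Clifford relations read $E_iE_j+E_jE_i=\ANG{E_i,E_j}$ for the generators $E_i$ of $\CLF{\LAT}$, not for the even-part basis $e_i$, but your computation with the $M_i$ already settles the algebra identification.
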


\begin{lemma} \label{lem:condition_negative_det}
We have $\clfC^\times \subseteq SL_2(\Z)$
 if and only if $-1$ is not a quadratic residue modulo $k$.
(This condition is also equivalent to the condition that
 $k$ is divisible by $4$ or a prime $p$ such that
 $p \equiv 3 \bmod 4$.)
\end{lemma}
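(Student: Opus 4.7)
The plan is to establish the equivalence by a determinant computation modulo $k$ together with, for the nontrivial direction, a CRT-based construction of an explicit $\alpha \in \clfC^\times$ with $\det(\alpha) = -1$. The parenthetical statement is the classical description of when $-1$ is a quadratic residue modulo $k$, so I will simply cite or briefly recall it.

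For the easy implication, I would take any $\alpha = \begin{pmatrix} a & b \\ c & d \end{pmatrix} \in \clfC^\times$ with $\det(\alpha) = -1$ and use the defining congruences $a \equiv d \pmod{k}$ and $k \mid c$ to conclude
\begin{equation*}
 -1 = \det(\alpha) = ad - bc \equiv d^2 \pmod{k}.
\end{equation*}
Hence $d^2 \equiv -1 \pmod{k}$, so $-1$ is a quadratic residue modulo $k$; contrapositive gives the ``only if'' direction.

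For the other direction, I assume $d_0^2 \equiv -1 \pmod{k}$ and look for $\alpha$ of the shape $\begin{pmatrix} d + kt & lb' \\ k & d \end{pmatrix}$ with $d = d_0 + kj$. The equation $\det(\alpha) = -1$ simplifies to the linear Diophantine equation
\begin{equation*}
 td - lb' = -\frac{1+d^2}{k},
\end{equation*}
which is solvable in integers $t, b'$ whenever $\gcd(d, l) = 1$. So the task reduces to choosing $j$ so that $\gcd(d_0 + kj, l) = 1$: for any prime $p \mid l$ that also divides $k$, coprimality is automatic, since $d_0^2 \equiv -1 \pmod{p}$ forbids $p \mid d_0$ (including the case $p = 2$, where $-1 \equiv 1$ forces $d_0$ odd); for primes $p \mid l$ with $p \nmid k$, the shift $kj$ ranges over every residue class mod $p$, so the bad class can be avoided. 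CRT combines these local choices into a single $j$, producing the desired $\alpha \in \clfC^\times$ with $\det(\alpha) = -1$.

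The main obstacle, which is mild, is this coprimality adjustment step — in particular being careful with the $p = 2$ edge case — so that a single shifted $d$ works against all prime divisors of $l$ simultaneously. Once $d$ is fixed, the remaining solvability of the Diophantine equation is routine. The supplementary characterization ``$4 \mid k$ or some prime $p \mid k$ with $p \equiv 3 \pmod{4}$'' then follows at once by applying CRT to the condition $d^2 \equiv -1 \pmod{k}$ prime-by-prime, which is textbook.
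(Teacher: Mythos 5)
Your proof is correct and follows essentially the same route as the paper: both directions come down to the observation that $\det(\alpha)\equiv a^2\equiv d^2\pmod{k}$ for $\alpha\in\clfC^\times$, together with an explicit lift of a square root of $-1$ modulo $k$ to a matrix $\left(\begin{smallmatrix} a & lb' \\ k & d\end{smallmatrix}\right)\in\clfC$ of determinant $-1$. The paper packages the lifting step as surjectivity of the homomorphism $\clfC^\times\to\{\bar a\in(\Z/k\Z)^\times \mid a^2\equiv\pm1\bmod k\}$ (via surjectivity of $(\Z/kl\Z)^\times\to(\Z/k\Z)^\times$), whereas you solve the B\'ezout equation directly after a CRT adjustment ensuring $\gcd(d,l)=1$; the content is the same.
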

\begin{proof}
It is enough to show
 the following group homomorphism is surjective:
\begin{equation*}
\begin{split}
 \clfC^\times
  = \{ \alpha = \SqMat{a}{b}{c}{d} \in \clfC \bigm| {} & {}
  \OP{Nr}(\alpha)=\pm 1 \} \\
  \rightarrow {} & {} \{ \bar{a} \in (\Z/k \Z)^\times \bigm|
   a^2 \equiv \pm 1 \bmod k \};
  \quad \alpha \mapsto \bar{a}.
\end{split}
\end{equation*}
This is the case because the projection
 $(\Z/kl \Z)^\times \rightarrow (\Z/k \Z)^\times$ is surjective,
 and so is the following group homomorphism:
\begin{equation*}
\begin{split}
 \{ \alpha & {} = \begin{pmatrix} a& b \\ c& d \end{pmatrix}
  \in GL_2(\Z) \bigm|
  c \equiv 0 \bmod k, ~ b\equiv 0 \bmod l \} \\
 & \quad \rightarrow
  \{ (\bar{a},\bar{d}) \bigm|
  \bar{a},\bar{d} \in (\Z/kl \Z)^\times, ~
  ad \equiv \pm 1 \bmod kl \}; \quad
  \alpha \mapsto (\bar{a},\bar{d}).
\end{split}
\end{equation*}
\end{proof}

By Lemma \ref{lem:v_dot_E},
 there is a natural identification (up to sign):
\begin{equation} \label{eq:B_and_L_dotE}
 \LAT_0(\DDz) = \LAT_0 \cdot E; \quad
 v \leftrightarrow v E,
\end{equation}
 where $\DDz := \disc(\LAT_0) = - \frac{1}{4} k^2 l$.
Again, by Lemma \ref{lem:v_dot_E}, we have
\begin{equation} \label{eq:Ei_Lat_kl}
 (E_i E)_{1 \leq i \leq 3}
 = (\tilde{e}_1,\tilde{e}_2,\tilde{e}_3) \, Q_{\LAT_0}
 = ((k/2) \tilde{e}_3,l \tilde{e}_2,(k/2) \tilde{e}_1).
\end{equation}
(Recall that $\tilde{e}_i = e_i-\frac{1}{2} \tr(e_i)$.)
Thus
\begin{equation} \label{eq:sigma_x123}
 \LAT_0 \cdot E =
 \{ \sigma = \sum_{i=1}^{3} x_i E_i E
 = \frac{k}{2} \SqMat{l x_2}{l x_3}{k x_1}{-l x_2}
 \bigm|
 x_1,x_2,x_3 \in \Z \}.
\end{equation}
For $\sigma$ as in (\ref{eq:sigma_x123}), we have
 $\ANG{\sigma,\sigma}_B = \OP{Nr}(\sigma)%
 = \det(\sigma) = \DDz \cdot (k x_1 x_3+l x_2^2)$,
 which is ensured by (\ref{eq:B_and_L_dotE}).
By Proposition \ref{prop:even_clifford_LAT}
 (and Corollary \ref{cor:ortho_plus}),
 the following holds:

\begin{proposition} \label{prop:ternary_mat_rep}
We have an isomorphism
\begin{equation}
 \PGB{\clfC} \CONGa 
 \{ g \in O^+(\LAT) \bigm| \bar{g}= \det(g) \cdot \id_{A(\LAT)} \}
\end{equation}
 defined by
\begin{equation}
 \alpha \mapsto \varphi_\alpha
 := (\sigma \mapsto \OP{Nr}(\alpha) \cdot \alpha \sigma \alpha^{-1}
 = \alpha \sigma \alpha^*).
\end{equation}
Here $\bar{g}$ denotes the image of $g$ in $O(q(\LAT))$ and
\begin{equation}
 \PGB{\clfC} =
 \{ \alpha =
 \SqMat{a}{b}{c}{d}
 \in PGL_2(\Z) \bigm|
 a-d \equiv c \equiv 0 \bmod{k}, ~
 b \equiv 0 \bmod{l}
 \}.
\end{equation}
Under the basis
 $(\frac{k}{2} \tilde{e}_1,-l \tilde{e}_2,\frac{k}{2} \tilde{e}_3)$
 of $\LAT_0 \cdot E=\LAT_0(\DDz)$,
 the isomorphism $\varphi_\alpha$ is represented by
 the following matrix $P_\alpha$:
\begin{equation}
 P_\alpha = \begin{pmatrix}
 a^2 & 2 a b & (-k/l) \cdot b^2 \\
 a c & a d+b c & (-k/l) \cdot b d \\
 (-l/k) \cdot c^2 & (-l/k) \cdot 2 c d & d^2
 \end{pmatrix}.
\end{equation}
In fact, we have
\begin{equation}
 {P_\alpha}^T \cdot Q_{\LAT} \cdot P_\alpha
 =(ad-bc)^2 \cdot Q_{\LAT}=Q_{\LAT}, \quad
 Q_{\LAT} =
 \begin{pmatrix}
  0 & 0 & k \\
  0 & 2l & 0 \\
  k & 0 & 0
 \end{pmatrix}.
\end{equation}
\end{proposition}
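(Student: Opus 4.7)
The strategy is to derive this proposition directly from Corollary~\ref{cor:ortho_plus} applied to $L = \LAT$, together with the explicit identification $\CLFP{\LAT} \cong \clfC$ of~\eqref{eq:clf_LAT_B} and a short computation with $2 \times 2$ matrices.

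For the isomorphism, I will start from Corollary~\ref{cor:ortho_plus}, which (under the implicit non-$2$-elementary assumption on $A(\LAT)$) provides a bijection $\alpha \bmod \{\pm 1\} \leftrightarrow \varphi_\alpha$ between $\CLFPM{\LAT}^\times/\{\pm 1\}$ and $\{g \in O^+(\LAT) \mid \bar g = \pm \id\}$, together with the relations $\bar{\varphi}_\alpha = \varepsilon_\alpha (N\alpha) \id$ and $\det(\varphi_\alpha) = N\alpha$. For $\alpha \in \clfC^\times = \CLFP{\LAT}^\times$ one has $\varepsilon_\alpha = +1$, so $\bar{\varphi}_\alpha = (N\alpha) \id = \det(\varphi_\alpha) \id$; conversely, a $g$ in the target set lifts to some $\alpha \in \CLFPM{\LAT}^\times$ satisfying $\varepsilon_\alpha (N\alpha) = N\alpha$ in $O(q(\LAT))$, which forces $\varepsilon_\alpha = +1$, hence $\alpha \in \clfC^\times$. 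This gives the claimed isomorphism $\PGB{\clfC} \cong \{g \in O^+(\LAT) \mid \bar g = \det(g) \id\}$.

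For the matrix formula I will specialize the general setup to $\LAT$, where $(a,b,c,s,t,u) = (0, l, 0, 0, k, 0)$, so $\tilde{e}_1 = e_1 = \SqMat{0}{l}{0}{0}$, $\tilde{e}_2 = e_2 - (k/2) = \SqMat{k/2}{0}{0}{-k/2}$, and $\tilde{e}_3 = e_3 = \SqMat{0}{0}{k}{0}$ under \eqref{eq:clf_LAT_B}. I then compute the three matrix products $\alpha f_i \alpha^*$ with $\alpha = \SqMat{a}{b}{c}{d}$, $\alpha^* = \SqMat{d}{-b}{-c}{a}$, and $(f_1, f_2, f_3) = ((k/2)\tilde{e}_1, -l\tilde{e}_2, (k/2)\tilde{e}_3)$. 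The four matrix entries of $\alpha f_i \alpha^*$ are then expanded back in the basis $(f_1, f_2, f_3)$, yielding the $i$th column of $P_\alpha$. This is a mechanical expansion.

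The final equation $P_\alpha^T Q_\LAT P_\alpha = (ad-bc)^2 Q_\LAT$ can be checked entry by entry, but more efficiently it follows from the conceptual observation that $\varphi_\alpha(\sigma) = \alpha \sigma \alpha^*$ rescales $\langle\,,\,\rangle_B$ by $\OP{Nr}(\alpha)^2 = (ad-bc)^2$, combined with Lemma~\ref{lem:v_dot_E}(3) and \eqref{eq:Ei_Lat_kl}, which identify $(f_1, f_2, f_3)$ with $(E_3 \CDT E, -E_2 \CDT E, E_1 \CDT E)$ and hence the Gram matrix of $(f_1, f_2, f_3)$ under $\langle\,,\,\rangle_B$ with a nonzero scalar multiple of $Q_\LAT$. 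The only real obstacle is notational bookkeeping: the basis $(f_1, f_2, f_3)$ is a permuted and sign-twisted version of $(E_i \CDT E)$, and one must track this reordering when reading off the columns of $P_\alpha$; otherwise the whole computation is routine.
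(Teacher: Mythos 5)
Your proposal is correct and follows essentially the same route as the paper: the paper derives the isomorphism by citing Proposition~\ref{prop:even_clifford_LAT} and Corollary~\ref{cor:ortho_plus} and then states that $P_\alpha$ is found ``by a direct computation,'' which is exactly the computation you carry out (and your column-by-column expansion of $\alpha f_i \alpha^*$ does reproduce the stated matrix). Your conceptual verification of $P_\alpha^T Q_\LAT P_\alpha=(ad-bc)^2 Q_\LAT$ via the rescaling of $\langle\,,\,\rangle_B$ and Lemma~\ref{lem:v_dot_E}(3) is a harmless refinement of the paper's direct check, and your flagging of the implicit non-$2$-elementary hypothesis inherited from Corollary~\ref{cor:ortho_plus} is apt.
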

%
%
\begin{proof}
One can find the matrix $P_\alpha$ by a direct computation.
(We use $-l \tilde{e}_2$ instead of $l \tilde{e}_2$,
 in order to obtain a ``good'' matrix representation.)
\end{proof}

By (\ref{eq:Ei_Lat_kl}), we have
\begin{align}
 E_1 E_2 E_3 E
 {} &= \frac{1}{E^2} \cdot E_1 E \cdot E_2 E \cdot E_3 E \\
 {} &= - \frac{1}{\DDz} \cdot \frac{k^2 l}{4}
  \cdot \tilde{e}_3 \tilde{e}_2 \tilde{e}_1
 = \SqMat{0}{0}{0}{k^2 l/2}.
\end{align}
If we write $\beta \in \CLFM{\LAT}$ as
 $\beta = \sum_{i=1}^{3} x_i E_i + x_4 E_1 E_2 E_3$,
 then
\begin{equation}
 \CLFM{\LAT} \cdot E = \{
 \beta E
 = \frac{k}{2} \SqMat{l x_2}{l x_3}{k x_1}{-l x_2 + kl x_4}
 \bigm| x_1,\ldots,x_4 \in \Z
 \} \subset \clfC_\Q
\end{equation}
 by (\ref{eq:sigma_x123}).
Thus, by $E E^*=\DDz$,
\begin{align}
 N \beta &= \beta \beta^*
 =\DDz^{-1} \cdot \beta E \cdot (\beta E)^*
 =\DDz^{-1} \cdot \OP{Nr}(\beta E) \\
 &=k x_1 x_3 + l x_2 (x_2-k x_4). \label{eq:N_beta_value}
\end{align}
(The intersection matrix of $\CLFM{L}$
 for the general case is given in
 Remark \ref{rem:clf_minus_mat}(iii).)
Therefore, Theorem \ref{thm:MAIN} implies:
\begin{proposition} \label{prop:CL_minus_unit}
We have a bijection
\begin{equation}
 \sV / \{ \pm 1 \} \rightarrow
 \{ g \in O_\GAM(\LAT) \bigm| \det(g)=-1 \};
 \quad
 \pm \alpha \mapsto
 (\sigma \mapsto - \alpha \sigma \alpha^{-1}),
\end{equation}
 where
\begin{align}
 \sV := {} & \CLFM{\LAT}^\times \cdot E \\
 = {} & \{ \beta E \bigm| \beta \in \CLFM{\LAT}, ~
  N \beta = \pm 1 \} \\
 = {} & \{ \alpha \in \CLFM{\LAT} \cdot E \bigm|
  \OP{Nr}(\alpha) = \pm \DDz \} \\
 = {}  &
 \left\{
  \frac{k}{2} \SqMat{l x_2}{l x_3}{k x_1}{-l x_2 + kl x_4}
  \, \middle| \,
  \begin{array}{ll}
   x_1,\ldots,x_4 \in \Z, \\
   k x_1 x_3 + l x_2 (x_2-k x_4) = \pm 1
  \end{array}
 \right\}.
\end{align}
Moreover, for $\varepsilon \in \{ \pm 1 \}$,
 the following equivalence holds (see Remark \ref{rem:LAT_kl_rep}):
\begin{align}
 & N \beta=\varepsilon \ \ (\exists \beta \in \CLFM{\LAT}) \\
 &~\Leftrightarrow~
 \OP{Nr}(\alpha)=\varepsilon \cdot \DDz \ \ %
 (\exists \alpha \in \CLFM{\LAT} \cdot E) \\
 &~\Leftrightarrow~
 \sigma^2=\varepsilon \ \ (\exists \sigma \in \LAT_0).
\end{align}
In particular, $\sV = \varnothing$
 if and only if $\LAT_0$ does not represent $\pm 1$.
\end{proposition}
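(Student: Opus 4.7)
The plan is to deduce the proposition by specializing Theorem \ref{thm:MAIN} to $L = \LAT$ and combining it with the explicit identifications $\CLFP{\LAT} \cong \clfC$ and the matrix formulas for $E_i \CDT E$ established in Section \ref{subsect:U_kl}. Theorem \ref{thm:MAIN} gives $O_\GAM(\LAT) \cong \CLFPM{\LAT}^\times/\{\pm 1\}$, and the isometries of determinant $-1$ correspond exactly to $\CLFM{\LAT}^\times/\{\pm 1\}$ because $\varepsilon_\alpha = -1$ iff $\alpha \in \CLFM{\LAT}$. Since $E \in \CLFM{\LAT}_\Q$ is central and invertible (Lemma \ref{lem:v_dot_E}), I would then translate the parametrization from $\CLFM{\LAT}^\times$ to $\sV = \CLFM{\LAT}^\times \cdot E$ via $\beta \mapsto \alpha := \beta \CDT E$; centrality of $E$ gives $-\alpha\sigma\alpha^{-1} = -\beta\sigma\beta^{-1}$, so the formula stated in the proposition agrees with the one from Theorem \ref{thm:MAIN} applied to $\beta$.

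Next I would verify the four equivalent descriptions of $\sV$. The second is just the defining condition $\CLFM{\LAT}^\times = \{\beta : N\beta = \pm 1\}$. The third follows from the identity $\OP{Nr}(\beta E) = (\beta E)(\beta E)^* = \beta(EE^*)\beta^* = \DDz \cdot N\beta$ (using centrality of $E$ and $EE^* = -E^2 = \DDz$), which incidentally also yields the first $\Leftrightarrow$ in the triple equivalence. For the fourth (explicit matrix) description, I would write $\beta = \sum_{i=1}^{3} x_i E_i + x_4 E_1 E_2 E_3$, apply (\ref{eq:Ei_Lat_kl}) together with the easily verified $E_1 E_2 E_3 \CDT E = \SqMat{0}{0}{0}{k^2 l/2}$ (which one can obtain from $E^2 = -\DDz$ and the formula for $E_2 \CDT E$), and read off the matrix of $\beta \CDT E$. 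Taking its determinant recovers formula (\ref{eq:N_beta_value}).

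Finally, for the equivalence with $\sigma^2 = \varepsilon$ in $\LAT_0$, one direction is immediate: given $\sigma \in \LAT_0$ with $\sigma^2 = \varepsilon$, the inclusion $\LAT \subset \CLFM{\LAT}$ combined with $\sigma^* = \sigma$ (as $\sigma$ lies in $L^{\otimes 1}$) yields $N\sigma = \sigma\sigma^* = \sigma^2 = \varepsilon$. For the converse, given $\beta$ with $N\beta = \varepsilon$, I would rewrite (\ref{eq:N_beta_value}) as $N\beta = l x_2^2 + k(x_1 x_3 - l x_2 x_4)$ and set $\sigma := (x_1 x_3 - l x_2 x_4) E_1 + x_2 E_2 + E_3 \in \LAT$. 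Since the quadratic form on $\LAT_0$ is $(y_1 E_1 + y_2 E_2 + y_3 E_3)^2 = l y_2^2 + k y_1 y_3$, this yields $\sigma^2 = l x_2^2 + k(x_1 x_3 - l x_2 x_4) = \varepsilon$. The ``in particular'' statement follows by letting $\varepsilon$ range over $\{\pm 1\}$.

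The only non-bookkeeping step is the converse direction just described; the hard part will be spotting the right $\sigma$, but the choice is essentially forced by expressing the $k$-divisible part of $N\beta$ as the product of $(x_1 x_3 - l x_2 x_4)$ and $1$, which collapses the step to a one-line verification. Everything else is a routine application of Theorem \ref{thm:MAIN} and the explicit computations in Section \ref{subsect:U_kl}.
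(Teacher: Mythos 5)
Your proposal is correct and follows essentially the same route as the paper: the proposition is obtained by specializing Theorem \ref{thm:MAIN} to the determinant $-1$ part $\CLFM{\LAT}^\times$, transporting it to $\sV$ by multiplication by the central element $E$ (using $EE^* = \DDz$ for the norm identity), and reading off the explicit matrix form from (\ref{eq:Ei_Lat_kl}) and (\ref{eq:N_beta_value}). The only minor divergence is in the last equivalence, where you construct $\sigma = (x_1x_3 - lx_2x_4)E_1 + x_2E_2 + E_3$ directly from $\beta$, while the paper (Remark \ref{rem:LAT_kl_rep}) instead observes that both conditions reduce to the same arithmetic criterion $\gcd(k,l)=1$ and $\varepsilon l \equiv \xi^2 \bmod k$; both are one-line verifications and your version is, if anything, slightly more self-contained.
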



\begin{remark} \label{rem:LAT_kl_rep}
For $\varepsilon \in \{ \pm 1 \}$,
 the lattice $\LAT_0$ represents $\varepsilon$
 (that is, $\sigma^2=\varepsilon$ for some $\sigma \in \LAT_0$)
 if and only if
\begin{equation} \label{eq:condition_kl}
 \OP{gcd}(k,l)=1
 \text{\ \ and\ \ }
 \varepsilon \cdot l \equiv \xi^2 \bmod{k} \ \ (\exists \xi \in \Z).
\end{equation}
This is because
 $\LAT_0$ represents $\varepsilon$ if and only if
 $k xz+l y^2=\varepsilon$
 for some $x,y,z \in \Z$.
Since $N \beta$ for $\beta \in \CLFM{\LAT}$ is given by
 (\ref{eq:N_beta_value}),
 the condition (\ref{eq:condition_kl})
 is also equivalent to
 $N \beta=\varepsilon$ for some $\beta \in \CLFM{\LAT}$.
\end{remark}

We combine Propositions
 \ref{prop:ternary_mat_rep} and \ref{prop:CL_minus_unit}
 with Corollary \ref{cor:ortho_plus}
 to conclude:

\begin{proposition} \label{prop:disc_kernel_BV}
We have
\begin{equation}
 O_\GAM(\LAT)
 = \{ (\sigma \mapsto \alpha \sigma \alpha^{-1}) \bigm|
 \alpha \in \clfC^\times \}
 \sqcup
 \{ (\sigma \mapsto - \alpha \sigma \alpha^{-1}) \bigm|
 \alpha \in \sV \},
\end{equation}
 where $\sV$ is defined as in Proposition \ref{prop:CL_minus_unit}.
This decomposition is according to
 whether each element is in $SO_\GAM(\LAT)$ or not.
Moreover, suppose that
 $A(\LAT)$ is not a $2$-elementary abelian group.
Then
 the images of $\clfC^\times$ and $\sV$
 in $\clfC^\times_\Q / \Q^\times \cong PGL_2(\Q)$
 do not intersect and we have
\begin{gather}
 \{ g \in O^+(\LAT) \bigm| \bar{g}=\pm \id_{A(\LAT)} \}
 = \{ \varphi_\alpha \bigm|
 \alpha \in \clfC^\times \sqcup \, \sV \}
 \cong O_\GAM(\LAT), \\
 \det(\varphi_\alpha) = \nu(\alpha), \quad
 \bar{\varphi}_\alpha
 = \varepsilon_\alpha \cdot \nu(\alpha) \cdot \id_{A(\LAT)},
\end{gather}
 where
 $\varphi_\alpha := (\sigma \mapsto%
  \nu(\alpha) \cdot \alpha \sigma \alpha^{-1})$;
 $\varepsilon_\alpha=1$ if $\alpha \in \clfC^\times$
 and $\varepsilon_\alpha=-1$ if $\alpha \in \sV$;
 and $\nu(\alpha)$ is the sign of $\OP{Nr}(\alpha)$, or
\begin{equation*}
 \nu(\alpha) =
 \begin{cases}
  \OP{Nr}(\alpha) & \text{if} \quad \alpha \in \clfC^\times, \\
  N(\alpha E^{-1}) = \OP{Nr}(\alpha)/\DDz & \text{if} \quad \alpha \in \sV.
 \end{cases}
\end{equation*}
\end{proposition}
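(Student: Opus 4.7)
The first decomposition of $O_\GAM(\LAT)$ assembles two earlier results with essentially no work. Proposition \ref{prop:even_clifford_LAT} already exhibits $SO_\GAM(\LAT)$ as $\{\sigma \mapsto \alpha\sigma\alpha^{-1} : \alpha \in \clfC^\times\}$, and Proposition \ref{prop:CL_minus_unit} exhibits $O_\GAM(\LAT) \setminus SO_\GAM(\LAT)$ as $\{\sigma \mapsto -\alpha\sigma\alpha^{-1} : \alpha \in \sV\}$. Since one piece consists of isometries of determinant $+1$ and the other of $-1$, their union is automatically the claimed disjoint union.

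For the second part I would apply Corollary \ref{cor:ortho_plus} directly to $L = \LAT$: under the hypothesis that $A(\LAT)$ is not $2$-elementary, it provides the bijection $\pm\alpha \mapsto \varphi_\alpha$ from $\Gamma^\times(\CLFPM{\LAT})$ onto $\{g \in O^+(\LAT) : \bar g = \pm\id_{A(\LAT)}\}$, together with the formulas $\det\varphi_\alpha = N\alpha$ and $\bar\varphi_\alpha = \varepsilon_\alpha \cdot N\alpha \cdot \id$. I would then transport this through the identification $\CLFM{\LAT}^\times \cong \sV$, $\beta \mapsto \beta E$ used in Proposition \ref{prop:CL_minus_unit}. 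Since $E$ is central in $\CLF{\LAT}_\Q$ with $N(E) = \DDz$, a short computation gives $\beta v \beta^{-1} = \alpha v \alpha^{-1}$ for $\alpha = \beta E$ and $N\beta = \OP{Nr}(\alpha)/\DDz$, so the formula $\varphi_\beta(v) = N\beta \cdot \beta v \beta^{-1}$ rewrites as $\varphi_\alpha(v) = \nu(\alpha) \cdot \alpha v \alpha^{-1}$ with the $\nu$ defined in the proposition; the formulas for $\det(\varphi_\alpha)$ and $\bar\varphi_\alpha$ then read off.

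The hard part will be the disjointness assertion in $PGL_2(\Q) \cong \clfC_\Q^\times/\Q^\times$, which is strictly stronger than the injectivity supplied by Corollary \ref{cor:ortho_plus}: two elements $\alpha_1 \in \clfC^\times$ and $\alpha_2 = \beta E \in \sV$ could in principle be proportional by some rational scalar $\mu \notin \{\pm 1\}$, for example whenever $\pm\DDz$ happens to be a rational square (which can occur even under the hypothesis). My plan is to argue by contradiction. Supposing $\alpha_2 = \mu\alpha_1$ with $\mu \in \Q^\times$, centrality of $E$ and commutativity of $\mu$ yield $\varphi_\beta = \varepsilon \cdot \varphi_{\alpha_1}$ with $\varepsilon := N\beta/\OP{Nr}(\alpha_1) \in \{\pm 1\}$. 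Passing to $O(q(\LAT))$ via the formulas above produces $-N\beta \cdot \id = \varepsilon \cdot \OP{Nr}(\alpha_1) \cdot \id$; substituting $N\beta = \varepsilon \cdot \OP{Nr}(\alpha_1)$ and cancelling the nonzero scalar $\OP{Nr}(\alpha_1)$ collapses the identity to $\id = -\id$ in $O(q(\LAT))$, contradicting the hypothesis that $A(\LAT)$ is not $2$-elementary. This cancellation is the crux of the argument; everything else is bookkeeping.
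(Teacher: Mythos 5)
Your proposal is correct and follows essentially the same route as the paper, which derives this proposition precisely by combining Propositions \ref{prop:ternary_mat_rep} and \ref{prop:CL_minus_unit} with Corollary \ref{cor:ortho_plus} and offers no further written proof. Your contradiction argument for the disjointness in $PGL_2(\Q)$ (reducing a hypothetical proportionality $\beta E = \mu\alpha_1$ to $\id = -\id$ in $O(q(\LAT))$) correctly supplies the one step the paper leaves implicit, and it is the right argument: it is equivalent to observing that such a coincidence would force $-\id_\LAT \in O_\GAM(\LAT)$, contradicting the hypothesis that $A(\LAT)$ is not $2$-elementary.
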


Combined with these results,
 Proposition \ref{prop:odd_pic_num} implies:

\begin{proposition}
Suppose that $\LAT$ has signature $(1,2)$
 (namely $l<0$) and does not represent $-2$,
 that is, $\ROOT(\LAT)=\varnothing$.
Let $X$ be a K3 surface with $S_X \cong \LAT$.
(Such a K3 surface exists by Theorem \ref{THM_Nikulin2}.)
In the same setting as in
 Proposition \ref{prop:disc_kernel_BV},
 we have
\begin{equation}
 \Aut(X) \cong
 \{ g \in O^+(\LAT) \bigm| \bar{g}=\pm \id_{A(\LAT)} \}
 = \{ \varphi_\alpha \bigm|
 \alpha \in \clfC^\times \sqcup \, \sV \}
 \cong O_\GAM(\LAT).
\end{equation}
Here
 $\sV = \{ \alpha \in \CLFM{\LAT} \cdot E \bigm| \OP{Nr}(\alpha) = \DDz \}$.
\end{proposition}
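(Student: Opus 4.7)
The plan is to assemble the conclusion from three earlier results in the paper, leaving only a sign check as the live step.

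First, since $\LAT$ is an even lattice of rank $3$ and signature $(1,2)$, Theorem~\ref{THM_Nikulin2} applied with $\rho=3$ produces a K3 surface $X$ with $S_X\cong\LAT$; I would fix such an isomorphism at the outset. The Picard number equals $3$, which is odd and at least $3$, and by hypothesis $\ROOT(S_X)=\varnothing$, so Proposition~\ref{prop:odd_pic_num} applies directly and yields
\begin{equation*}
 \Aut(X)\cong\{g\in O^+(S_X)\mid \bar g=\pm\id_{A(S_X)}\}\cong O_\GAM(S_X).
\end{equation*}
I would also record a byproduct of the proof of that proposition: the hypothesis $\ROOT(S_X)=\varnothing$ forces $\id\neq-\id$ in $O(q(S_X))$, i.e.\ $A(\LAT)$ is not $2$-elementary abelian. (If it were, then $-\id_{S_X}$ would act trivially on $A(S_X)$ and lift via the global Torelli theorem to an anti-symplectic involution whose Nikulin classification would produce a $(-2)$-curve, contradicting the hypothesis.)

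Second, with the non-$2$-elementariness of $A(\LAT)$ in hand, I would transport everything along the chosen isomorphism $S_X\cong\LAT$ and invoke Proposition~\ref{prop:disc_kernel_BV}. This directly supplies the Clifford-algebraic parametrization
\begin{equation*}
 \{g\in O^+(\LAT)\mid \bar g=\pm\id_{A(\LAT)}\}=\{\varphi_\alpha\mid\alpha\in\clfC^\times\sqcup\sV\},
\end{equation*}
and combining with the first step produces the full chain of isomorphisms claimed.

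The hard part will be a sign reconciliation. In Proposition~\ref{prop:disc_kernel_BV} the set $\sV$ was inherited from Proposition~\ref{prop:CL_minus_unit} and allows $\OP{Nr}(\alpha)=\pm\DDz$, whereas the present statement restricts to $\OP{Nr}(\alpha)=\DDz$. I would use $\DDz=-k^2l/4>0$ (since $l<0$) together with the identities $\det(\varphi_\alpha)=\nu(\alpha)$ and $\bar\varphi_\alpha=\varepsilon_\alpha\nu(\alpha)\id$ from Proposition~\ref{prop:disc_kernel_BV}, and the signature-$(1,2)$ definition of $O^+$ (the sign of the spinor norm must agree with that of $\det^{m+k}=\det$, since $m+k=7$ is odd), to verify that precisely the elements $\alpha\in\CLFM{\LAT}\cdot E$ with $\OP{Nr}(\alpha)=+\DDz$ produce maps lying in the positive-cone component $O^+(\LAT)$, while the opposite sign would flip the positive cone and fall into the other connected component. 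Once this sign matching is in place, the restricted description of $\sV$ in the statement is justified and the proof is complete.
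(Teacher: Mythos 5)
Your overall assembly --- existence via Theorem~\ref{THM_Nikulin2}, the identification $\Aut(X)\cong\{g\in O^+(S_X)\mid\bar g=\pm\id\}\cong O_\GAM(S_X)$ from Proposition~\ref{prop:odd_pic_num} (together with the observation that $\ROOT(S_X)=\varnothing$ forces $A(\LAT)$ not to be $2$-elementary), and then Proposition~\ref{prop:disc_kernel_BV} for the Clifford parametrization --- is exactly the route the paper intends; the proposition is stated there as an immediate consequence of those results. The problem is your final step. You propose to reconcile the two descriptions of $\sV$ by arguing that only $\alpha$ with $\OP{Nr}(\alpha)=+\DDz$ yield $\varphi_\alpha\in O^+(\LAT)$, the other sign ``flipping the positive cone.'' This is false, and it contradicts the very propositions you cite: Corollary~\ref{cor:ortho_plus} and Proposition~\ref{prop:disc_kernel_BV} assert that $\varphi_\alpha\in O^+(\LAT)$ for \emph{every} $\alpha\in\clfC^\times\sqcup\sV$ with $\sV$ as in Proposition~\ref{prop:CL_minus_unit}, i.e.\ allowing $\OP{Nr}(\alpha)=\pm\DDz$. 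Concretely, for $\alpha=\beta\CDT E$ with $N\beta=-1$ one has $\det(\varphi_\alpha)=\nu(\alpha)=-1$ and $\theta(\varphi_\alpha)\equiv N\beta=-1$, and since for signature $(1,2)$ the membership criterion for $O^+$ is $\det(g)=\OP{sgn}(\theta(g))$ (here $m+k=7$ is odd), such a $\varphi_\alpha$ \emph{does} lie in $O^+(\LAT)$. Equivalently: $\theta(-\id_\LAT)\equiv\DDz>0$, so of the pair $\pm g$ the representative $\nu(\alpha)\cdot g_\alpha=\varphi_\alpha$ is the one in $O^+(\LAT)$ for either sign of $\OP{Nr}(\alpha)$.

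The actual reason the statement may write $\sV=\{\alpha\in\CLFM{\LAT}\cdot E\mid\OP{Nr}(\alpha)=\DDz\}$ is that the other half is empty under the hypotheses: by the chain of equivalences at the end of Proposition~\ref{prop:CL_minus_unit} (see also Remark~\ref{rem:LAT_kl_rep}), there exists $\alpha\in\CLFM{\LAT}\cdot E$ with $\OP{Nr}(\alpha)=-\DDz$ if and only if $\sigma^2=-1$ for some $\sigma\in\LAT_0$, i.e.\ if and only if $\LAT$ represents $-2$, which is excluded by $\ROOT(\LAT)=\varnothing$. So the set of Proposition~\ref{prop:CL_minus_unit} and the set in the present statement coincide for this trivial reason, not because of a cone computation. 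Replace your sign argument by this emptiness observation and the proof closes.
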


\subsection{Some isomorphic orthogonal groups}

Let $L$ and $M$ be lattices with the same signature.
In the following proposition,
 we mean by $O(L) \cong O(M)$
 that these orthogonal groups are
 not only isomorphic abstract groups, but also
 conjugate subgroups in $O(L_\R) \cong O(M_\R)$

\begin{proposition} \label{prop:N_kl}
For non-zero integers $\kappa$ and $\lambda$,
 set $\LATa_{\kappa,\lambda} := U(\kappa) \OSUM \LL{\lambda}$.
Then we have the following natural isomorphisms:
\begin{enumerate}
\item
 if $\OP{gcd}(\kappa,\lambda)=1$, then
 $O(\LATa_{\kappa,\lambda}) \cong O(\LATa_{1,\kappa \lambda})$;
\item
 if both $\kappa$ and $\lambda$ are odd, then
 $O(\LATa_{\kappa,\lambda}) \cong O(\LATa_{\kappa,4 \lambda})$;
\item
 for any $\kappa$ and $\lambda$,
 there exists a non-zero integer $\nu$
 such that $O(\LATa_{\kappa,\lambda}) \cong O(\LATa_{1,\nu})$.
\end{enumerate}
\end{proposition}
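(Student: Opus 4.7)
The plan is to deduce (3) from (1), then treat (1) and (2) using the Clifford-algebra dictionary developed in the preceding subsection.

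For (3), I would observe that rescaling the bilinear form of a lattice by a nonzero scalar does not alter its orthogonal group as a subgroup of $GL_3(\Z)$, since $\phi^{\top} Q \phi = Q$ is equivalent to $\phi^{\top}(cQ)\phi = cQ$ for any $c \in \Q^\times$. Setting $d := \gcd(\kappa,\lambda)$ and rescaling by $1/d$ produces the integer-valued lattice $\LATa_{\kappa/d,\,\lambda/d}$ with identical orthogonal group and $\gcd(\kappa/d,\lambda/d)=1$; applying (1) to this coprime pair yields the claim with $\nu = \kappa\lambda/d^2$.

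For (1), under $\gcd(\kappa,\lambda)=1$, I would invoke Propositions \ref{prop:even_clifford_LAT} and \ref{prop:disc_kernel_BV}, which realize $O(\LATa_{\kappa,\lambda})$ via the order $\clfC_{\kappa,\lambda} \subset M_2(\Q)$ together with the odd-part set $\sV_{\kappa,\lambda} \subset \CLFM{\LATa_{\kappa,\lambda}} \cdot E$. The plan is to exhibit an explicit $GL_2(\Q)$-conjugation---an Atkin-Lehner-type involution adapted to the pair $(\kappa,\lambda)$, for example an element close to $\operatorname{diag}(\kappa,1)$---mapping $\clfC_{\kappa,\lambda}^{\times} \sqcup \sV_{\kappa,\lambda}$ bijectively onto $\clfC_{1,\kappa\lambda}^{\times} \sqcup \sV_{1,\kappa\lambda}$ modulo $\pm 1$, and then to verify via Corollary \ref{cor:ortho_plus} that the induced action on the discriminant forms is also compatible. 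The coprimality enters decisively at the last step: for a matrix in $GL_2(\Z)$ with $b \equiv 0 \bmod \kappa\lambda$ and determinant $\pm 1$, the relation $ad \equiv \pm 1 \bmod \kappa$ together with the CRT splitting $A(\LATa_{\kappa,\lambda}) = (\Z/\kappa)^2 \oplus \Z/\lambda$ forces the extra congruence $a \equiv d \bmod \kappa$ to hold automatically, which is precisely what allows the conjugation to land in the correct order.

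For (2), assuming both $\kappa$ and $\lambda$ odd, I would compare $\LATa_{\kappa,\lambda}$ with the index-$2$ sublattice obtained by replacing the $\LL{\lambda}$-generator $g$ by $2g$; this sublattice is isomorphic to $\LATa_{\kappa,4\lambda}$. Since $\lambda$ is odd, this modification affects only the $2$-adic structure, and the orders $\clfC_{\kappa,\lambda}$ and $\clfC_{\kappa,4\lambda}$ become $GL_2(\Q_2)$-conjugate (for instance via $\operatorname{diag}(1,2)$) precisely under the odd-odd hypothesis, with Proposition \ref{prop:strong-approx} lifting the identification of discriminant kernels to the full orthogonal groups.

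The principal obstacle is the verification in (1): since the two lattices generically sit in different rational quadratic spaces (their discriminants modulo squares differ by a factor of $\kappa$), no $\Q$-linear isomorphism of the lattices themselves can exist. The conjugation must therefore be realized at the level of the Clifford orders inside $M_2(\Q)$ and then transported back via the dictionary of Section \ref{sect:LAT_kl}; the delicate bookkeeping is the matching of the discriminant images and the $\sV$-contributions under the conjugation, rather than merely the abstract unit groups.
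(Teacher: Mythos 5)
Your overall architecture (deduce (3) from (1) by rescaling, then find a comparison lattice for (1) and (2)) is sound, and your rescaling argument for (3) is exactly right and matches the paper. But the route you take for (1) and (2) through the Clifford-order dictionary has genuine gaps. First, Propositions \ref{prop:even_clifford_LAT} and \ref{prop:disc_kernel_BV} compute the \emph{discriminant kernel} $O_\GAM$, not the full orthogonal group, and the two discriminant groups here are genuinely different: $A(\LATa_{\kappa,\lambda})\cong(\Z/\kappa)^2\oplus\Z/\lambda$ has order $\kappa^2\lambda$, while $A(\LATa_{1,\kappa\lambda})$ has order $\kappa\lambda$. So matching $\clfC^\times\sqcup\sV$ on the two sides would at best identify two subgroups that need not have the same index in the respective full groups, whereas the statement concerns $O$, not $O_\GAM$. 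Second, the machinery of Section \ref{sect:LAT_kl} is set up only for the even lattices $U(k)\OSUM\LL{2l}$, but $\LATa_{\kappa,\lambda}$ is an odd lattice whenever $\lambda$ is odd --- which is precisely the situation of part (2), so the orders $\clfC_{\kappa,\lambda}$ you want to conjugate are not even defined there. Third, the congruence you call decisive is false: from $b\equiv 0\bmod\kappa$ and $\det=\pm1$ one gets $ad\equiv\pm1\bmod\kappa$, i.e.\ $d\equiv\pm a^{-1}\bmod\kappa$, which does not imply $a\equiv d\bmod\kappa$ (take $\kappa=5$, $a=2$, $d=3$).

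The paper's proof avoids all of this and is purely lattice-theoretic. For (1) it realizes $\LATa_{1,\kappa\lambda}(\kappa)$ \emph{inside} the same rational quadratic space, as the canonically defined sublattice $\LATb_1:=\VE{\LATa_{\kappa,\lambda}}{\kappa}=\{v\mid\ANG{v,\LATa_{\kappa,\lambda}}\subseteq\kappa\Z\}$ (coprimality is what makes this come out as $U(\kappa)\OSUM\LL{\kappa^2\lambda}$), and then closes the loop of stabilizer inclusions $g(L)=L\Rightarrow g(\LATb_1)=\LATb_1\Rightarrow\cdots\Rightarrow g(L)=L$ using $\VE{\LATb_1}{\lambda}=(\kappa\lambda)\cdot L^\vee$; since rescaling does not change orthogonal groups, this gives the claim. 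This also dissolves the ``principal obstacle'' you flag: the comparison lattice is only needed up to rescaling, so there is no issue with discriminants differing modulo squares. For (2) the comparison lattice is the even part $L'=\{v\mid\ANG{v,v}\equiv0\bmod 2\}\cong\LATa_{\kappa,4\lambda}$, which is canonical in $L$; the converse inclusion $O(L')\subseteq O(L)$ --- the direction your sketch does not really address --- holds because $L/L'$ is generated by the \emph{unique} order-$2$ element of $A(L')$ (this is where both $\kappa$ and $\lambda$ being odd is used), so $L$ is recoverable from $L'$ alone. If you insist on the quaternionic route, you would have to work with $SO(L)\cong\NORM(B)/\Q^\times$ from Remark \ref{rem:SO_normal} (normalizers of the orders, not their unit groups) and first reduce to even lattices by a global rescaling; the sublattice argument is shorter and self-contained.
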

\begin{proof}
For a $\Q$-valued lattice $L$ and a non-zero integer $\mu$,
 define sublattices of $L$ by
\begin{equation*}
 \VE{L}{\mu} :=
 \{ v \in L \bigm| \ANG{v,L} \subseteq \mu \Z \}, \quad
 \mu \cdot L := \{ \mu v \bigm| v \in L \} \cong L(\mu^2).
\end{equation*}
Assume
$\OP{gcd}(\kappa,\lambda)=1$
 and set
\begin{align*}
 \LATb_1 := {} &
 \VE{\LATa_{\kappa,\lambda}}{\kappa}
 \cong U(\kappa) \OSUM \ANG{\kappa^2 \lambda}
 \cong \LATa_{1,\kappa \lambda}(\kappa), \\
 \LATb_2 := {} &
 \VE{\LATb_1}{\lambda}
 = \VE{(\VE{\LATa_{\kappa,\lambda}}{\kappa})}{\lambda}
 = (\kappa \lambda) \cdot \LATa_{\kappa,\lambda}^\vee.
\end{align*}
Thus, for $g \in O((\LATa_{\kappa,\lambda})\OverQ)$,
\begin{align*}
 & g(\LATa_{\kappa,\lambda}) = \LATa_{\kappa,\lambda}
 ~ \Rightarrow ~
 g(\LATb_1) = \LATb_1
 ~ \Rightarrow ~
 g(\LATb_2) = \LATb_2 \\
 & \Rightarrow ~
 g(\LATa_{\kappa,\lambda}^\vee) = \LATa_{\kappa,\lambda}^\vee
 ~ \Rightarrow ~
 g(\LATa_{\kappa,\lambda}) = \LATa_{\kappa,\lambda}.
\end{align*}
This implies the assertion (1).
Under the assumption of (2), let $L=\LATa_{\kappa,\lambda}$ and define
\begin{equation*}
 L' :=
 \{ v \in L \bigm| \ANG{v,v} \equiv 0 \bmod{2} \}.
\end{equation*}
Then $L'$ is an even sublattice of $L$ of index $2$
 and $L' \cong \LATa_{\kappa,4 \lambda}$.
Conversely, let $v \bmod L \in A(L')$
 be the unique element in $A(L')$ of order $2$.
Then $L$ is generated by $L'$ and $v$.
Hence $O(L) \cong O(L')$.
Thus the assertion (2) is verified.
The assertion (3) immediately follows from (1).
\end{proof}

\begin{remark}
By Proposition \ref{prop:N_kl},
 we know that
 the orthogonal group $O(\LAT)$ of our $\LAT \cong U(k) \OSUM \LL{2l}$
 is isomorphic to $O(U \OSUM \LL{2m})$ for some non-zero integer $m$.
\end{remark}

\subsection{Some special cases of $\LAT$}

Here we give a few examples of
 $\LAT \cong U(k) \OSUM \LL{2l}$ with $k$ and $l$ specialized,
 applying the results in the previous sections.
First we recall the following.

For a non-zero integer $k$, we set
\begin{equation}
 \Pi_0(k) :=
 \{
 \SqMat{a}{b}{c}{d}
 \in PGL_2(\Z) \bigm|
 c \equiv 0 \bmod{k}
 \}, \quad
 \Gamma_0(k) := \Pi_0(k) \cap \Gamma.
\end{equation}
For convenience, we admit that $k$ is negative.
Obviously we have $\Pi_0(k)=\Pi_0(|k|)$
 and $\Gamma_0(k)=\Gamma_0(|k|)$.
By Proposition \ref{prop:strong-approx},
 for an indefinite even lattice $L$ of rank $\geq 3$,
 we have
\begin{equation}
 (SO^+)^\GAMu(L) \unlhd O(q(L)), \quad
 O(q(L))/(SO^+)^\GAMu(L) \cong (\Z/2 \Z)^r,
\end{equation}
 for some $r$.
When $\rank\, L=3$ and $B \cong \CLFP{L}$,
 we have
\begin{align}
 SO_\GAM(L) \cong {} & \PGB{B}, \\ 
 SO^+_\GAM(L) \cong {} & \PSB{B}, \\ 
 SO^+(L) \cong {} & \Gamma_{\NORM^+}(B)
 := \{ \alpha \in B_\R^1 \bigm| \alpha B \alpha^{-1} = B \} / \R^\times,
\end{align}
 by Theorem \ref{thm:MAIN} and
 Remark \ref{rem:SO_normal}.
The orthogonal groups $SO^+_\GAM(L)$ and $SO^+(L)$
 are considered as discrete subgroups of
 $B_\R^1 / \R^\times \cong PSL_2(\R)$.

{\bf Notations.}
We fix notations as follows:
 $k$ is a non-zero integer.
The prime factorization of $|k|$ is given by
\begin{equation}
 |k| = \prod_{i=1}^{\nu} q_i, \quad
 q_i=p_i^{e_i}, \quad e_i \geq 1,
\end{equation}
 where the $p_i$ are distinct primes.
Moreover, we set
 $\nu' := \#\{ i \bigm| e_i \equiv 1 \bmod{2} \} \leq \nu$.

\begin{example} \label{exm:u2k}
Let $L = U \OSUM \LL{2l}$.
Then the natural map $SO^+(L) \rightarrow O(q(L))$ is surjective.
One can show this by applying Theorem \ref{thm:MAIN}, as follows.
Set
\begin{equation}
 B :=
 \{
 \SqMat{a}{b}{c}{d}
 \in M_2(\Z) \bigm|
 c \equiv 0 \bmod{l}
 \}
 \cong \CLFP{L}.
\end{equation}
(We take the transposition of $\clfC$ in (\ref{eq:clf_LAT_B}).)
Then
\begin{equation}
 SO_\GAM(L)   \cong \PGB{B} = \Pi_0(l), \quad
 SO^+_\GAM(L) \cong \PSB{B} = \Gamma_0(l).
\end{equation}
Moreover, $\sV$ as in Proposition \ref{prop:CL_minus_unit}
 is given as
\begin{equation}
 V :=
 \left\{ \alpha =
  \frac{1}{2} \SqMat{l y_1}{y_2}{l y_3}{l y_4}
  \, \middle| \,
   y_1,\ldots,y_4 \in \Z, ~
   \OP{Nr}(\alpha) = \pm \DDz = \pm \frac{-l}{4}
 \right\}.
\end{equation}
We slightly modify $V$ and define
\begin{equation}
 V' :=
 \left\{ \alpha =
  \sqrt{l_0} \SqMat{y_1}{y_2/l_0}{y_3}{y_4}
  \, \middle| \,
   y_1,\ldots,y_4 \in \Z, ~
   \OP{Nr}(\alpha) = \pm 1
 \right\}, \quad l_0 := |l|.
\end{equation}
Now Proposition \ref{prop:disc_kernel_BV} implies
\begin{equation}
 O_\GAM(L) \cong (B^\times \sqcup V')/ \{ \pm 1 \}
 = \ANG{\Pi_0(l),
  \sqrt{l_0} \left(\SqMatSmall{0}{1/l_0}{1}{0}\right)}.
\end{equation}
A direct computation shows the following:
\begin{equation}
 SO^+(L) \cong \Gamma_{\NORM^+}(B) = \Gamma^+_0(l),
\end{equation}
 where $\Gamma^+_0(l) \subset PSL_2(\R)$,
 containing $\Gamma_0(l)$ as a normal subgroup,
 is defined by
\begin{equation}
\begin{split}
 \Gamma^+_0(l) :=
 \{ \alpha
 = \sqrt{l'} \SqMat{a_0}{b_0/l'}{c_0 (l/l')}{d_0}
 \bigm| a_0,b_0,c_0,d_0 \in \Z, ~ 
 l' \in \Z_{>0}, & \\
 l' | l, ~
 \OP{Nr}(\alpha) = a_0 d_0 l'-b_0 c_0 (l/l') = 1 &
 \}.
\end{split}
\end{equation}
Obviously $a_0 d_0 l'$ and $b_0 c_0 (l/l')$
 are always coprime.
In particular, if $l' \neq 1$,
 then $b_0/l'$ is a non-zero irreducible fraction.
Hence the map
 $\alpha \mapsto \OP{rad}(l') := \prod_{p | l'} p$
 is well-defined.
(Alternatively,
 $\OP{rad}(l')$ is characterized as
 a square-free positive integer $r$
 such that $\sqrt{r} \cdot \alpha \in GL_2(\Q)$.)
Moreover, this map induces an isomorphism
\begin{equation} \label{eq:Gamma_plus_mod}
 \Gamma^+_0(l) / \Gamma_0(l)
 \CONGa
 \prod_{p | l} (p^{\Z}/p^{2 \Z}) \cong (\Z/2 \Z)^\nu.
\end{equation}
Thus we have
\begin{equation}
 (SO^+)^\GAMu(L) \cong
 SO^+(L) / SO^+_\GAM(L) \cong
 \Gamma^+_0(l) / \Gamma_0(l)
 \cong (\Z/2 \Z)^\nu.
\end{equation}
On the other hand,
 one can easily check that $O(q(L)) \cong (\Z/2\Z)^\nu$.
Hence $(SO^+)^\GAMu(L) = O(q(L))$, that is,
 the natural map $SO^+(L) \rightarrow O(q(L))$
 is surjective.
\end{example}

\begin{remark} \label{rem:spinor_u2k}
(i)
It is known that 
 $\Gamma^+_0(l)$ 
 is a maximal discrete subgroup of $PSL_2(\R)$
 if and only if $l$ is square-free.
(ii)
Let $L=U \OSUM \LL{2l}$.
The spinor norm map for $SO^+(L)$
 is given as $\alpha \mapsto l'$ in the same notations as above,
 which implies
\begin{align}
 \theta(SO^+(L))
 & {} = \{ l' \bmod{(\Q^\times)^2} \bigm| l' \in \Z_{>0}, ~
 l' | l, ~ \OP{gcd}(l',l/l')=1 \} \\
 & {} = \langle q_i \bmod{(\Q^\times)^2} \bigm| i=1,\ldots,\nu \rangle
 \cong (\Z/2\Z)^{\nu'}
\end{align}
 and
\begin{equation}
 \theta(SO(L)) {} = \langle \theta(SO^+(L)),-1 \rangle. 
\end{equation}
In addition, a similar argument for
 $L_p := L \otimes \Z_{p}$ shows
\begin{equation}
 \theta_p(SO(L_p))
 \equiv \Z_p^\times \cdot \{ 1,l \}
 \bmod{(\Q_p^\times)^2}.
\end{equation}
(iii)
In Example \ref{exm:u2k}, the surjectivity of
 $SO^+(L) \rightarrow O(q(L))$ is also proved by applying
 Proposition \ref{prop:strong-approx}
 (cf.\ Example \ref{exm:U_2e2}).
Here we give only an outline.
The codomain of the map $\gamma$
 in Proposition \ref{prop:strong-approx} is isomorphic to
\begin{equation*}
 \prod_{p|l} \frac%
 {\Z_p^\times \cdot \{ 1,l \} \bmod{(\Q_p^\times)^2}}%
 {\Z_p^\times \bmod{(\Q_p^\times)^2}}
 \cong (\Z/2\Z)^{\nu'},
\end{equation*}
 and hence $\theta(SO^+(L))$ maps onto it.
Therefore,
 $\OP{Coker}(\gamma)=0$ and the surjectivity is verified.
\end{remark}

\begin{example} \label{exm:uk2}
Let $L = U(k) \OSUM \LL{2}$.
We shall show
\begin{equation}
 [O(q(L)) : (SO^+)^\GAMu(L)] = 2^\nu.
\end{equation}
Set
\begin{equation} \label{eq:B_for_uk2}
 B :=
 \{
 \SqMat{a}{b}{c}{d}
 \in M_2(\Z) \bigm|
 a-d \equiv c \equiv 0 \bmod{k}
 \} \cong \CLFP{L}.
\end{equation}
Then $SO_\GAM(L) \cong \Gamma^\times(B)$.
Moreover, $\sV$ as in Proposition \ref{prop:CL_minus_unit}
 is given as
\begin{equation}
 V := \left\{ \frac{k}{2} \cdot \alpha \, \middle| \,
  \alpha =
  \SqMat{x_2}{x_3}{k x_1}{-x_2 + k x_4}, ~
   x_1,\ldots,x_4 \in \Z, ~
   \OP{Nr}(\alpha) = \pm 1
 \right\}.
\end{equation}
Hence
\begin{align}
 O_\GAM(L) & {} \cong
 \{
 \SqMat{a}{b}{c}{d}
 \in PGL_2(\Z) \bigm|
 a - \mu d \equiv c \equiv 0 \bmod{k} ~
 (\exists \mu \in \{ \pm 1 \})
 \} \\
 & = \ANG{ \Gamma^\times(B) ,
  \left(\SqMatSmall{1}{0}{0}{-1}\right) }.
\end{align}
Suppose that $k$ is odd.
Then
 $SO^+(L) \cong SO^+(U \OSUM \LL{2k)}$ 
 by Proposition \ref{prop:N_kl}(1).
We have
 $SO^+(L) \cong \Gamma_{\NORM^+}(B) = \Gamma^+_0(k)$.
Hence
\begin{align*}
 |(SO^+)^\GAMu(L)| &= [SO^+(L) : SO^+_\GAM(L)] =
 [\Gamma^+_0(k) : \PSB{B}] \\
 &= [\Gamma^+_0(k) : \Gamma_0(k)] \cdot [\Gamma_0(k) : \PSB{B}] \\
 &= 2^\nu \cdot (2^{-\nu} \cdot \varphi(k))
 = \varphi(k),
\end{align*}
 where 
 $\varphi(k) := |(\Z/k \Z)^\times|$ is Euler's totient function
 and we use
\begin{equation*}
 \Gamma_0(k) / \PSB{B}
 \cong \frac{
  (\Z/k \Z)^\times
 }{
  \{ \bar{a} \in (\Z/k \Z)^\times \bigm|
  a^2 \equiv 1 \bmod{k} \}
 }, \quad
 |\Gamma_0(k) / \PSB{B}| = 2^{-\nu} \cdot \varphi(k).
\end{equation*}
On the other hand, one can also verify
 $|O(q(L))| = 2^\nu \cdot \varphi(k)$ directly.
Hence the discriminant image $(SO^+)^\GAMu(L)$
 has index $2^\nu$ in $O(q(L))$.
(Similarly to Remark \ref{rem:spinor_u2k},
 we can apply Proposition \ref{prop:strong-approx}
 to this case, as well.
The codomain of $\gamma$ is isomorphic to
 $(\Z/2\Z)^{\nu+\nu'}$
 and $\theta(SO(L)) \cong (\Z/2\Z)^{\nu'}$
 maps injectively into it,
 and thus $\OP{Coker}(\gamma) \cong (\Z/2\Z)^{\nu}$.)
\end{example}

\begin{example} \label{exm:U_2e2}
Let $L=U(2^e) \OSUM \LL{2}$ with $e \geq 1$.
We shall show:
\begin{equation}
 [O(q(L)) : (SO^+)^\GAMu(L)] =
 \begin{cases}
  1 & \quad (e=1) \\
  2 & \quad (e=2) \\
  4 & \quad (e \geq 3)
 \end{cases}
\end{equation}
We omit the proof for $e=1$, $2$.
Assume $e \geq 3$.
Then
\begin{equation} \label{eq:exm_3_isom}
 SO^+(L) \cong SO^+(U(2^{e-1}) \OSUM \LL{1)} \cong
 SO^+(U \OSUM (2^{e-1})) \cong \Gamma^+_0(2^{e-2})
\end{equation}
 by Proposition \ref{prop:N_kl}(1)
 and Example \ref{exm:u2k}.
We use the same $B$ as in (\ref{eq:B_for_uk2}) with $k=2^e$.
Since $\Gamma^\times(B) = \Gamma^1(B)$, it follows that
\begin{equation}
 SO_\GAM(L) = SO_\GAM^+(L).
\end{equation}
Set $L_2 := L \otimes \Z_2$ and $B_2 := B \otimes \Z_2$.
By (\ref{eq:exm_3_isom}) and Remark \ref{rem:spinor_u2k}, we have
\begin{equation*}
 \theta(SO^+(L)) {} 
 \equiv \{ 1,2^e \} \bmod{(\Q^\times)^2}, \quad 
 \theta_2(SO(L_2))
 \equiv \Z_2^\times \cdot \{ 1,2^e \}
 \bmod{(\Q_2^\times)^2}.
\end{equation*}
Since $SO_\GAM(L_2) \cong \PGB{B_2}$, it  follows that
\begin{equation*}
 \theta_2(SO_\GAM(L_2))
 \equiv \OP{Nr}(B_2^\times)
 \equiv \{ 1 \} \bmod{(\Q_2^\times)^2}.
\end{equation*}
Now we apply Proposition \ref{prop:strong-approx}.
Since $\det(O_\GAM(L_2)) = \{ \pm 1 \}$, we have
\begin{equation*}
 H_2 :=
 \frac{f_2(O(L_2))}{f_2(O_\GAM(L_2))} \cong
 \frac{\theta_2(SO(L_2))}{\theta_2(SO_\GAM(L_2))}
 \cong \theta_2(SO(L_2)).
\end{equation*}
Hence the cokernel of the map $\gamma$
 as in Proposition \ref{prop:strong-approx}
 is isomorphic to
\begin{equation*}
 \frac{H_2}{f_2(SO^+(L))} \cong
 \frac{\theta_2(SO(L_2))}{\theta_2(SO^+(L))} \cong
 \frac{\Z_2^\times \cdot \{ 1,2^e \} \bmod{(\Q_2^\times)^2}}%
      {\{ 1,{\displaystyle 2^e} \} \bmod{(\Q_2^\times)^2}}
 \cong \Z_2^\times \cong (\Z/2\Z)^2.
\end{equation*}
Thus the index $[O(q(L)) : (SO^+)^\GAMu(L)]$ is equal to $4$.
Alternatively, this index can be directly verified
 by showing the following (we omit the details):
\begin{align}
 |O(q(L))| & = 2^{e+2}, \\
 \Gamma_{\NORM^+}(B)
 & = \rho \Gamma^+_0(2^{e-2}) \rho^{-1}, \quad
 \rho := \left( \SqMatSmall{1}{0}{0}{2} \right), \\
 |(SO^+)^\GAMu(L)| & =
 [SO^+(L) : SO^+_\GAM(L)]
 = [\rho \Gamma^+_0(2^{e-2}) \rho^{-1} : \PSB{B}]=2^e.
\end{align}
In addition, we have $[SO^\GAMu(L) : (SO^+)^\GAMu(L)]=2$
 and $O^\GAMu(L) = SO^\GAMu(L)$.
\end{example}

\section{Example: $U(n) \OSUM \LL{-2n}$} \label{sect:lat_u_n_2n}

In this section,
 we apply the results in Section \ref{subsect:U_kl}
 to the following lattice $M_n$
 for a positive integer $n$:
\begin{equation}
 M :=
 \begin{pmatrix} 0& 0&1\\0&-2&0\\ 1&0&0 \end{pmatrix}
 \cong U \OSUM \LL{-2}, \quad
 M_n := M(n) \cong U(n) \OSUM \LL{-2n}.
\end{equation}
We identify $O(M_n)$ for each $n$ with $O(M)$ in a natural way.
Under the same notations for congruence subgroups
 as in Section \ref{subsect:cong}, we have
\begin{equation}
 O(M)=O_\GAM(M) \cong \Pi \times \{ \pm \id_{M} \}, ~
 SO(M) \cong O^+(M) \cong \Pi, ~
 SO^+(M) \cong \Gamma.
\end{equation}
Applying Proposition \ref{prop:ternary_mat_rep}
 with $k=1$ and $l=-1$,
 we obtain an isomorphism
\begin{equation}
 \Pi \cong O^+(M); \quad
 \alpha = \SqMat{a}{b}{c}{d} \leftrightarrow
 P_\alpha = \begin{pmatrix}
 a^2 & 2 a b & b^2 \\
 a c & a d+b c & b d \\
 c^2 & 2 c d & d^2
 \end{pmatrix}.
\end{equation}
The correspondence
 $\alpha \leftrightarrow \OP{Nr}(\alpha) \cdot P_\alpha$
 gives an isomorphism $\Pi \cong SO(M)$.

\subsection{Automorphism group of $X_n$}

Now we study automorphism groups of
 a certain kind of K3 surfaces.

\begin{proposition} \label{prop:isometry-of-L}
For $n \geq 2$, the following hold:
\begin{gather}
 B_n := \{ \alpha \in M_2(\Z) \bigm|
  \alpha \equiv \lambda I \bmod{n} ~
  (\exists \lambda \in \Z)
 \} \cong \CLFP{M_n}, \\
 O_\GAM(M_n) = SO_\GAM(M_n) \cong G_n, \quad
 SO^+_\GAM(M_n) \cong G_n \cap \Gamma,
\end{gather}
 where
\begin{equation}
 G_n := \{ \alpha \in \Pi \bigm|
  \alpha \equiv \lambda I \bmod{n} ~
  (\exists \lambda \in \Z)
 \}.
\end{equation}
\end{proposition}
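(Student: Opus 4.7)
The plan is to recognize that $M_n = U(n) \OSUM \LL{-2n}$ is the lattice $\LAT = U(k) \OSUM \LL{2l}$ studied in Section \ref{sect:LAT_kl} with $k = n$ and $l = -n$. Once this identification is in place, all three assertions should follow from the machinery already developed, namely Propositions \ref{prop:even_clifford_LAT}, \ref{prop:CL_minus_unit}, and \ref{prop:disc_kernel_BV}, together with Remark \ref{rem:LAT_kl_rep}.

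First I would specialize equation (\ref{eq:clf_LAT_B}) to $k = n$, $l = -n$. The three congruence conditions defining $\clfC$ become $a - d \equiv 0$, $c \equiv 0$, and $b \equiv 0$ modulo $n$, which are jointly equivalent to $\alpha \equiv \lambda I \bmod n$ with $\lambda = a$. Thus $\CLFP{M_n} \cong B_n$. Then by Proposition \ref{prop:even_clifford_LAT}, $SO_\GAM(M_n) \cong \PGB{B_n} = B_n^\times / \{\pm I\}$ and $SO^+_\GAM(M_n) \cong \PSB{B_n} = B_n \cap SL_2(\Z) / \{\pm I\}$. A brief bookkeeping step identifies $\PGB{B_n}$ with $G_n$: a representative of a class in $G_n$ is a matrix $\alpha \in GL_2(\Z)$ with $\alpha \equiv \pm \lambda I \bmod n$ (depending on which lift one picks), and either lift lies in $B_n^\times$. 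The analogous argument with $SL_2(\Z)$ yields $SO^+_\GAM(M_n) \cong G_n \cap \Gamma$.

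Next I would show that the determinant$-(-1)$ part of $O_\GAM(M_n)$ is empty, so $O_\GAM(M_n) = SO_\GAM(M_n)$. By Proposition \ref{prop:disc_kernel_BV}, this part is in bijection with $\sV/\{\pm 1\}$, where $\sV = \CLFM{M_n}^\times \cdot E$. Proposition \ref{prop:CL_minus_unit} combined with Remark \ref{rem:LAT_kl_rep} reduces the non-emptiness of $\sV$ to the condition that $(M_n)_0$ represents $\pm 1$, which in turn requires $\OP{gcd}(k,l) = 1$. Since $\OP{gcd}(n,-n) = n \geq 2$, this fails, so $\sV = \varnothing$ and the desired equality follows.

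There is no serious obstacle; the proposition is essentially an unpacking of Section \ref{sect:LAT_kl} in the self-dual case $|k| = |l|$. The only points requiring attention are the sign conventions (matching $2l = -2n$ with $l = -n$) and ensuring that the passage to $PGL_2(\Z)$-classes correctly absorbs the ambiguity of the scalar $\lambda$ modulo $\pm 1$; both are routine checks.
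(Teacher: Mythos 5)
Your proposal is correct and follows exactly the route the paper intends: the paper states this proposition without a separate proof, presenting it as the specialization of Section \ref{sect:LAT_kl} (in particular (\ref{eq:clf_LAT_B}), Proposition \ref{prop:even_clifford_LAT}, and Propositions \ref{prop:CL_minus_unit}--\ref{prop:disc_kernel_BV} with Remark \ref{rem:LAT_kl_rep}) to $k=n$, $l=-n$. Your identification of the three congruence conditions with $\alpha\equiv\lambda I\bmod n$, and your use of $\gcd(n,-n)=n\geq 2$ to force $\sV=\varnothing$ and hence $O_\GAM(M_n)=SO_\GAM(M_n)$, are precisely the checks needed.
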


\begin{proposition}
For $n \geq 2$,
 let $X_n$ be an arbitrary K3 surface whose Picard lattice is
 isomorphic to $M_n$.
Then
\begin{equation}
 \Aut(X_n) \cong
 O_\GAM(M_n) = SO_\GAM(M_n) \cong G_n.
\end{equation}
Moreover, the group of symplectic automorphisms corresponds to
 $G_n \cap \Gamma$.
There exists an anti-symplectic automorphism of $X_n$
 if and only if
 $-1$ is a quadratic residue modulo $n$
 (see Lemma \ref{lem:condition_negative_det}).
\end{proposition}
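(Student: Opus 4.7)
The plan is to apply Proposition \ref{prop:odd_pic_num} to $X_n$, combine it with the structural description of $O_\GAM(M_n)$ provided by Proposition \ref{prop:isometry-of-L}, and invoke Lemma \ref{lem:condition_negative_det} for the anti-symplectic criterion.

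First I would verify the hypotheses of Proposition \ref{prop:odd_pic_num}. The Picard number is $\rho = 3$, which is odd and at least $3$. For $\ROOT(M_n) = \varnothing$, the Gram matrix of $M_n = U(n) \OSUM \LL{-2n}$ shows that the quadratic form takes values in $2n \Z$, so $\langle v, v \rangle = -2$ would force $2n \mid 2$, impossible for $n \geq 2$. Moreover, $A(M_n) \cong (\Z/n\Z)^{\oplus 2} \oplus \Z/2n\Z$ is not $2$-elementary for any $n \geq 2$, so $\id \neq -\id$ in $O(q(M_n))$, and the full conclusion of Proposition \ref{prop:odd_pic_num} applies.

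Next, Proposition \ref{prop:odd_pic_num} gives $\Aut(X_n) \cong O_\GAM(M_n)$, with the subgroup of symplectic automorphisms corresponding to $O^+_\GAM(M_n)$. Proposition \ref{prop:isometry-of-L} identifies $O_\GAM(M_n) = SO_\GAM(M_n) \cong G_n$ and $SO^+_\GAM(M_n) \cong G_n \cap \Gamma$, proving the first assertion. Because $O_\GAM(M_n) \subseteq SO(M_n)$ (every element has determinant $+1$), we have $O^+_\GAM(M_n) = O^+(M_n) \cap O_\GAM(M_n) = SO^+_\GAM(M_n) \cong G_n \cap \Gamma$, which gives the symplectic claim.

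For the anti-symplectic criterion, an anti-symplectic automorphism exists precisely when $\Aut(X_n)$ strictly contains its symplectic subgroup, i.e., when $G_n \not\subseteq \Gamma$. This amounts to $B_{n,n}^\times$ containing a matrix of determinant $-1$; by Lemma \ref{lem:condition_negative_det} with $k = n$, this is equivalent to $-1$ being a quadratic residue modulo $n$. The main (rather minor) obstacle is the bookkeeping: under the isomorphism $H \cong O_\GAM(S_X)$ in Proposition \ref{prop:odd_pic_num}, an anti-symplectic $\varphi$ produces $g := (\varphi^{-1})^*|_{S_X} \in H$ with $\bar g = -\id$, and the correspondence $g \leftrightarrow -g$ sends it to $-g \in O_\GAM(M_n) \setminus O^+_\GAM(M_n)$. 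One uses here the convention $O(M_n) = O^+(M_n) \times \{\pm \id_{M_n}\}$ (since the signature $(1,2)$ has $k=1$ odd), so $-\id_{M_n} \notin O^+(M_n)$ and therefore $-g \notin O^+_\GAM(M_n)$.
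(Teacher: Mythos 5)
Your proof is correct and follows essentially the same route as the paper, which obtains this proposition by combining Proposition \ref{prop:odd_pic_num} with Proposition \ref{prop:isometry-of-L} and Lemma \ref{lem:condition_negative_det}. Your explicit verifications of the hypotheses ($\ROOT(M_n)=\varnothing$, $A(M_n)$ not $2$-elementary, hence $O^+_\GAM(M_n)=SO^+_\GAM(M_n)$) and the sign bookkeeping for anti-symplectic elements are precisely the details the paper leaves implicit.
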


We provide here some facts on $M_n$.
First, for a ring $R$, define
\begin{align}
 PGL_2(R) := {} &
 \{ A \in M_2(R) \bigm| \det(A) \in R^\times \} / R^\times, \\
 PSL_2(R) := {} &
 \{ A \in PGL_2(R) \bigm| \det(A) = 1 \}.
\end{align}
We have a natural isomorphism
 $PGL_2(R) / PSL_2(R) \cong R^\times / (R^\times)^2$.
For simplicity, we write
 $\sM := M_n$ and $\sB := B_n$.
For each prime $p$ dividing $n$, set
 $\sM_p := \sM \otimes \Z_p$ and $\sB_p := \sB \otimes \Z_p$.
Let $q=p^e || n$, that is, $q|n$ and $\OP{gcd}(q,n/q)=1$.
Our computations in the previous sections are valid over $\Z_p$
 and we obtain
\begin{gather}
 O(\sM_p) \cong \Pi_p \times \{ \pm \id \}, \quad
  \Pi_p := PGL_2(\Z_p), \\ 
 O_\GAM(\sM_p) = SO_\GAM(\sM_p) \cong
 \Gamma^\times(\sB_p) := \sB_p^\times / \Z_p^\times \subseteq \Pi_p, \\
 O(q(\sM_p)) \cong O^\GAMu(\sM_p) \cong
  O(\sM_p) / O_\GAM(\sM_p) \cong
  PGL_2(\Z/q\Z) \times \{ \pm \id \}.
\end{gather}
The last line follows from
 the surjectivity of the natural map
 $O(\sM_p) \rightarrow O(q(\sM_p))$,
 which holds in general \cite[Corollary 1.9.6]{N}.
Hence
\begin{equation}
 O(q(\sM)) \cong \prod_{p | n} O(q(\sM_p)) \cong
 PGL_2(\Z / n \Z)
 \times
 \prod_{p | n} \{ \pm \id \in O(q(\sM_p)) \}.
\end{equation}
We have $\Gamma(n) \unlhd G_n$ and
\begin{equation}
 G_n / \Gamma(n)
 \cong \{ \lambda \in \Z/n \Z \bigm|
 \lambda^2 \equiv \pm 1 \bmod{n} \} / \{ \pm 1 \bmod{n} \}.
\end{equation}
The group $G_n$ is contained in $\Gamma$ if and only if
 $-1$ is a quadratic residue modulo $n$.
Since
\begin{equation}
 (SO^+)^\GAMu(\sM) \cong {} SO^+(\sM) / SO_\GAM^+(\sM) \cong
 \Gamma / (G_n \cap \Gamma)
 \cong {} PSL_2(\Z / n \Z),
\end{equation}
 it follows that
\begin{equation}
 \frac{O(q(\sM))}{O^\GAMu(\sM)} \cong
 \frac{
  (\Z/n\Z)^\times / ((\Z/n\Z)^\times)^2 \times
  \prod_{p | n} \{ \pm \id \in O(q(\sM_p)) \}
 }{
  \{ \pm 1 \} \times \{ \pm \id \in O(q(\sM)) \}
 }.
\end{equation}
Alternatively,
 one can apply Proposition \ref{prop:strong-approx}
 to show this.

\begin{example} \label{exm:2-power-gamma}
We have 
 $G_2=\Pi(2)$ and $G_4=\Gamma(4)$.
If $n=2^e$ with $e \geq 3$, then
\begin{equation}
 \Gamma(2^e)\subsetneq G_{2^{\scriptstyle e}} 
 \subsetneq \Gamma(2^{e-1}), \quad
 G_{2^{\scriptstyle e}} / \Gamma(2^e) \cong \Z/2 \Z, \quad
 \Gamma(2^{e-1}) / \Gamma(2^e) \cong (\Z/2 \Z)^3.
\end{equation}
By Proposition \ref{prop:torsion-grp},
 $G_{2^{\scriptstyle e}}$ for $e \geq 2$
 is a free group.
Moreover, we have
\begin{equation}
 [\Gamma : G_{2^{\scriptstyle e}}] =
 \begin{cases}
  24 & \text{if} \quad e =2, \\
  3 \cdot 2^{3e-4} & \text{if} \quad e \geq 3.
 \end{cases}
\end{equation}
Hence, by Proposition \ref{prop:free-group-12},
 the rank of $G_{2^{\scriptstyle e}}$ as a free group is given by
\begin{equation}
 r :=
 \rank\, G_{2^{\scriptstyle e}} =
 \frac{1}{6} \, [\Gamma : G_{2^{\scriptstyle e}}] + 1 =
 \begin{cases}
  5 & \text{if} \quad e =2, \\
  2^{3e-5} + 1 & \text{if} \quad e \geq 3.
 \end{cases}
\end{equation}
For a group $H$, we mean by $H^k$
 the (normal) subgroup generated by the elements of the form
 $g^k$ for $g \in H$.
We have $\Pi(2)^2=\Pi(4)=\Gamma(4)$ (see \cite{SI}),
 that is, $(G_2)^2=G_4$.
For $e \geq 2$, we have (see Remark \ref{rem:kth_power_G} below)
\begin{equation}
 (G_{2^{\scriptstyle e}})^2 \subsetneq G_{2^{\scriptstyle e+1}}, \quad
 G_{2^{\scriptstyle e}} / (G_{2^{\scriptstyle e}})^2 \cong (\Z/2 \Z)^r.
\end{equation}
\end{example}


\begin{remark} \label{rem:kth_power_G}
In general,
 if $k|n$, then any $g \in G_n$ satisfies $g^k \in G_{nk}$
 by Lemma \ref{lem:lattice_power}(2).
This statement also follows from the isomorphism
\begin{equation}
 (\Z/k \Z)^{\oplus 3}
 \cong \Pi(n)/\Pi(nk); \quad
 (\bar{a},\bar{b},\bar{c}) \leftrightarrow
 \SqMat{1+na}{nb}{nc}{1-na} \bmod{nk}.
\end{equation}
\end{remark}

\subsection{Abelian surfaces}

We can realize $M_n$ as the Picard lattice of
 an abelian surface.
Take a very general elliptic curve $E=\C/(\Z + \Z \tau)$ and set
\begin{equation}
 A_n := (E \times E)/\ANG{(1/n,\tau/n)}.
\end{equation}
Define curves $C'_i$ on $E \times E$ by
\begin{equation}
 C'_1 := \text{pt} \times E, ~
 C'_2 := E \times \text{pt}, ~
 C'_3 := \{ (x,x) \in E \times E \},
\end{equation}
 and let $C_i$ be the image of $C'_i$ in $A_n$.
Then the Picard lattice $S_{A_n}$ of $A_n$ is generated by
 the classes $[C_i]$.
From this, one can verify $S_{A_n} \cong M_n$.
It is straightforward to check the following isomorphism
\begin{equation}
 \Aut(A_n) \cong
 \{ \alpha \in GL_2(\Z) \bigm|
  \alpha \equiv \lambda I \bmod{n} ~
  (\exists \lambda \in \Z)
 \}; \quad
 \psi_\alpha \leftrightarrow \alpha,
\end{equation}
 where $\psi_\alpha$ is induced by
 a linear transformation by $\alpha$ in
 the coordinate system $(x,y) \in E \times E$.
We apply the Kummer construction to $A_n$.
Namely, we obtain a K3 surface $Y_n$
 as the minimal resolution of the quotient surface
 $A_n/ \{ \pm 1 \}$.
The Picard lattice $S_{A_n}$ maps to a sublattice of $S_{Y_n}$,
 which is isomorphic to $S_{A_n}(2) \cong M_{2n}$.
We have
\begin{equation}
 G_n \cong \Aut(A_n)/ \{ \pm \id \} \subseteq \Aut(Y_n).
\end{equation}
Each linear system $|C_i|$ gives an elliptic fibration
 $Y_n \rightarrow \BP^1$.
The $|C_i|$ induce a generically one-to-one map
\begin{equation}
 |C_1| \times |C_2| \times |C_3| \colon
  Y_n \rightarrow \BP^1 \times \BP^1 \times \BP^1,
\end{equation}
 whose image $\overline{Y}_n$
 is a Wehler K3 surface (see Section \ref{sect:intro}) if $n=1$ and
 a non-normal hypersurface of tri-degree $(2n,2n,2n)$ if $n \geq 2$.
A generic fiber of each projection
 $\overline{Y}_n \rightarrow \BP^1$ is a curve in $\BP^1 \times \BP^1$
 of bi-degree $(2n,2n)$
 with $4n(n-1)$ nodes,
 whose minimal resolution is an elliptic curve.

\section{Salem numbers} \label{sect:salem}

A complex number $\lambda$ is an algebraic integer
 if $S(\lambda)=0$ for some irreducible monic polynomial
 $S(x)\in \Z[x]$.
An algebraic integer $\lambda$ is called a unit if
 $\lambda^{-1}$ is also an algebraic integer.
A Salem number is a unit $\lambda>1$
 whose conjugates other than $\lambda^{\pm1}$
 lie on the unit circle.
The irreducible monic integer polynomial $S(x)$ 
 is a Salem polynomial and the degree of $S(x)$
 is the degree of the Salem number $\lambda$.
(Here we admit that $\lambda$ has degree $2$.)

Let $X$ be a K3 surface.
For an automorphism $f\in \Aut(X)$,
 it is known that either all eigenvalues of $f^*| H^2(X,\Z)$
 are roots of unity
 or there is a unique simple eigenvalue $\lambda$
 which is a Salem number \cite[Theorem 3.2]{Mc}.
Hence the irreducible factors of
 the characteristic polynomial of $f^*| H^2(X,\Z)$
 include at most one Salem polynomial;
 and the remaining factors are cyclotomic \cite[Corollary 3.3]{Mc}.

Suppose that the Picard lattice of $X$ is isomorphic to
 $L$ with signature $(1,2)$ and $\ROOT(L) = \varnothing$.
For $\varphi_\alpha \in O^+(L)$ as in Proposition \ref{cor:ortho_plus},
 its characteristic polynomial is given as
\begin{equation}
 \det(tI_3-\varphi_\alpha)
 =(t-\OP{Nr}(\alpha))(t^2- \OP{Tr}(\alpha^2) t+1), \quad
 \OP{Tr}(\alpha^2) = \OP{Tr}(\alpha)^2 -2 \OP{Nr}(\alpha).
\end{equation}
(For $\alpha \in B$, we diagonalize $\alpha$
 as an element in $B_\R \cong M_2(\R)$ and
 this equality can be checked directly.)

For example, if $L=U(2) \bot \LL{-4}$,
 then $\Aut(X) \cong O^+(L) \cong \Pi(2)$.
For $\alpha = \left( \SqMatSmall{a}{b}{c}{d} \right) \in \Pi(2)$,
 we have $\OP{Tr}(\alpha)=a+d \equiv 2 \bmod{4}$
 if $\OP{Nr}(\alpha)=1$
 and $\OP{Tr}(\alpha) \equiv 0 \bmod{4}$ if $\OP{Nr}(\alpha)=-1$.
Conversely, for 
\begin{equation}
 \beta=\begin{pmatrix}  1&2\\2n& 4n\pm 1 \end{pmatrix}\in \Pi(2)
 \qquad
 (n \in \Z),
\end{equation}
 we have $\OP{Nr}(\beta)=\pm 1$ and $\OP{Tr}(\beta)=4n+2$ or $4n$.
Therefore, the Salem polynomial of $g \in \Aut(X)$
 is of the form $t^2-At+1$, where
\begin{equation}
 A=\begin{cases}
  (4n+2)^2-2 &\text{ if $g$ is symplectic}, \\
  (4n)^2+2 &\text{ if $g$ is anti-symplectic},
 \end{cases}
\end{equation}
 for some $n \in \Z_{\geq 1}$.

\section{Appendix for Clifford algebras}

Let $L$ be an even lattice of rank $n$
 and define $L_0 := L(1/2)$.
Let $\ANGz{~,~}$ denote the bilinear form of $L_0$.
We define $Z^+$ and $Z^-$ by
\begin{equation}
 Z^\pm := \{ z \in \CLF{L}\OverQ \bigm|%
 zx=\pm xz ~~ (\forall x \in \CLF{L}\OverQ) \}.
\end{equation}
Fix an orthogonal basis $(t_i)$ of $L\OverQ$
 and set $t := t_1 \cdots t_n$.
Then, we have $Z^+=\Q$ and $Z^-=\Q \, t$ if $n$ is even,
 and $Z^+=\Q \oplus \Q \, t$ and $Z^-=0$ if $n$ is odd.

\begin{proposition} \label{prop:alternating_sum}
We use the same notations as above.
For a (non-degenerate) even lattice $L$ of rank $n$
 with a $\Z$-basis $(E_i)$,
 define elements $E$ and $\hat{E}_j$ in $\CLF{L}\OverQ$ by
\begin{align}
 & E {} := \frac{1}{n!} \sum_{\sigma \in S_n}
 \OP{sgn}(\sigma) \cdot E_{\sigma(1)} \cdots E_{\sigma(n)}, \\
 & \hat{E}_j {} := \frac{(-1)^{j+1}}{(n-1)!} \, \times
 \sum_{\sigma \in S_n; ~ \sigma(j)=j} \OP{sgn}(\sigma) \cdot
 E_{\sigma(1)} \cdots E_{\sigma(j-1)} E_{\sigma(j+1)} \cdots E_{\sigma(n)},
\end{align}
 where $S_n$ is the symmetric group of degree $n$.
Then $E$ is independent of the choice of a $\Z$-basis $(E_i)$ of $L$,
 up to sign, and we have
\begin{gather}
 E \in \Q \, t, \\
 E^* = (-1)^{n(n-1)/2} \cdot E, \quad
 E \CLFinvo{E}= \disc(L_0), \quad
 E^2 = (-1)^{n(n-1)/2} \cdot \disc(L_0), \\
 E_i E=\sum_{j=1}^{n} \ANG{E_i,E_j}_0 \cdot \hat{E}_j.
\end{gather}
Since the dual basis $(\dE_i)$ of $L_0^\vee$ 
 is given by
 $(\dE_1,\ldots,\dE_n)=(E_1,\ldots,E_n) \, Q_{L_0}^{-1}$,
 we have $\dE_i E=\hat{E}_i$, and hence
 $(\hat{E}_1 E,\ldots,\hat{E}_n E)%
 =(E_1,\ldots,E_n) \, Q_{L_0}^{-1} \cdot E^2$.
\end{proposition}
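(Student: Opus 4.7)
The plan is to reduce every assertion to computations in an orthogonal $\Q$-basis $(t_i)$ of $L_\Q$, where the Clifford relation $t_i t_j = -t_j t_i$ ($i\neq j$) makes products transparent, and to use the $\Q$-linear antisymmetrization isomorphism $\Phi\colon \wedge^\bullet L_\Q \xrightarrow{\sim} \CLF{L}_\Q$ to handle the contraction formula.

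First, since $E$ is visibly alternating in $(E_1,\ldots,E_n)$ (the defining sum is an antisymmetrization), it depends on the basis only through $E_1\wedge\cdots\wedge E_n \in \wedge^n L$; a change of $\Z$-basis scales this by a unit $\pm 1$, proving basis independence up to sign. Writing $(E_1,\ldots,E_n) = (t_1,\ldots,t_n)\cdot C$ with $C\in GL_n(\Q)$ and expanding multilinearly, the pairwise anticommutation of the $t_i$ collapses the antisymmetrization to a single term, yielding $E = \det(C)\cdot t$ with $t := t_1\cdots t_n$; in particular $E \in \Q\, t$.

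For the involution and norm identities I would work with $t$. Reversing the order in $t$ uses $\binom{n}{2}$ transpositions of anticommuting elements, so $t^* = (-1)^{n(n-1)/2}\, t$. Telescoping $t\, t^* = t_1 \cdots t_n \cdot t_n \cdots t_1$ from the middle outward gives $t\, t^* = \prod_i \ANGz{t_i,t_i}$. Since $Q_{L_0} = C\,\OP{diag}(\ANGz{t_i,t_i})\,C^T$ yields $\disc(L_0) = \det(C)^2 \prod_i \ANGz{t_i,t_i}$, substituting $E = \det(C)\, t$ gives at once $E^* = (-1)^{n(n-1)/2} E$, $E E^* = \disc(L_0)$, and $E^2 = (-1)^{n(n-1)/2} \disc(L_0)$.

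The main step, and the place I expect genuine work, is the contraction formula $E_i\cdot E = \sum_j \ANGz{E_i,E_j}\,\hat{E}_j$. Here I would invoke $\Phi$, defined in characteristic zero by $\Phi(v_1\wedge\cdots\wedge v_k) := \frac{1}{k!}\sum_{\sigma \in S_k} \OP{sgn}(\sigma)\, v_{\sigma(1)}\cdots v_{\sigma(k)}$; then $E = \Phi(E_1\wedge\cdots\wedge E_n)$, and restricting the permutations with $\sigma(j)=j$ to the remaining indices shows $\hat{E}_j = \Phi\bigl((-1)^{j+1}\, E_1\wedge\cdots\wedge \widehat{E_j}\wedge\cdots\wedge E_n\bigr)$. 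The key intertwining property, provable by induction on $k$ using the defining relation $vw+wv = 2\ANGz{v,w}$, is that under $\Phi$ left multiplication by $v\in L$ on $\CLF{L}_\Q$ corresponds to $v\wedge(\cdot) + \iota_v(\cdot)$ on $\wedge^\bullet L_\Q$ (with $\iota_v$ the contraction by $\ANGz{v,\cdot}$). Applied to $E_1\wedge\cdots\wedge E_n$, the wedge term vanishes in $\wedge^{n+1} L_\Q = 0$, and the contraction yields $\sum_j(-1)^{j-1}\ANGz{E_i,E_j}\, E_1\wedge\cdots\widehat{E_j}\cdots\wedge E_n$; the signs $(-1)^{j-1}$ and $(-1)^{j+1}$ cancel, giving the stated identity. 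The consequence $\dE_i\cdot E = \hat{E}_i$ is immediate from $\ANGz{\dE_i,E_j} = \delta_{ij}$, and the final displayed formula follows by right-multiplying by $E$, using $(\dE_1,\ldots,\dE_n) = (E_1,\ldots,E_n)\,Q_{L_0}^{-1}$ and the already-established value of $E^2$.
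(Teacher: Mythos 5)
Your proof is correct, and every assertion of the proposition is covered, but the route is genuinely different from the paper's. The paper verifies the whole statement for an orthogonal basis (where it is immediate from anticommutation) and then checks by explicit computation that each identity is covariant under elementary transformations of the basis --- in particular that $E$ is multiplied by $\det(\varepsilon)$ and that the $\hat{E}_j$ transform by the cofactor matrix. You replace both halves of this with more structural observations. For the scalar identities ($E \in \Q\,t$, $E^* = (-1)^{n(n-1)/2}E$, $E\CLFinvo{E}=\disc(L_0)$) you note that the antisymmetrized product is an alternating multilinear function of its arguments, so $E$ factors through $E_1\wedge\cdots\wedge E_n$ and equals $\det(C)\,t$ outright; this subsumes all of the paper's elementary-transformation checks for $E$ in one line. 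For the contraction identity $E_i\cdot E=\sum_j\ANGz{E_i,E_j}\,\hat{E}_j$ --- the only part requiring real work --- you invoke the Chevalley quantization map $\Phi\colon\wedge^\bullet L\OverQ\to\CLF{L}\OverQ$ and the intertwining relation $v\cdot\Phi(\omega)=\Phi(v\wedge\omega+\iota_v\omega)$, which yields the formula for an arbitrary basis in one application, the top-degree term dying in $\wedge^{n+1}L\OverQ=0$. Your bookkeeping is right: the contraction must indeed be taken with respect to the half-form $\ANGz{~,~}$ to match the normalization $v^2=\ANGz{v,v}$, and the signs $(-1)^{j-1}$ from $\iota_{E_i}$ and $(-1)^{j+1}$ from the definition of $\hat{E}_j$ cancel exactly as you claim. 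The one step you should not leave implicit is the intertwining relation itself: it is a standard fact, but its inductive proof is the actual content of your argument and is roughly comparable in length to the covariance computations it replaces. What your approach buys is basis-freeness and a conceptual explanation of why the formula holds; what the paper's buys is self-containedness at the level of two-line matrix manipulations.
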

\begin{proof}
For the reader's convenience, we give (an outline of) a proof.
The assertion is trivial if $(E_i)$ is an orthogonal basis.
We work over $\Q$.
It is enough to show that the change of a $\Q$-basis $(E_i)$
 by any elementary transformation does not affect the assertion.

If we transform $(E_i)$ by $\varepsilon \in GL_n(\Q)$,
 then $E$ is multiplied by $\det(\varepsilon)$,
 by the following argument.
We consider only the case where $n=3$ and
 $\varepsilon \colon E_1 \mapsto E_1 + \lambda E_2$
 (an elementary transformation)
 and omit the other computations.
In this case, the (induced) action of $\varepsilon$ is given as, for example,
\begin{align*}
 E_1 E_2 E_3 &\mapsto
 (E_1+\lambda E_2) E_2 E_3
 =E_1 E_2 E_3+\lambda E_2 E_2 E_3, \\
 E_2 E_1 E_3 &\mapsto
 E_2 (E_1+\lambda E_2) E_3
 =E_2 E_1 E_3+\lambda E_2 E_2 E_3.
\end{align*}
Thus the term $E_2 E_2 E_3$ is cancelled out and does not appear in $E$.
Other computations are similar and
 one can conclude that $\varepsilon E = E$.
We can also check that the transformation of $(\hat{E}_j)$ by $\varepsilon$
 is given as $\hat{E}_2 \mapsto \hat{E}_2 - \lambda \hat{E}_1$
 (corresponding to the cofactor matrix of $\varepsilon$).

Under the same setting as above,
 if the equality
\begin{equation*}
 E_i E=\sum_{j=1}^{3} \ANG{E_i,E_j}_0 \cdot \hat{E}_j
 \quad (i=1,2,3)
\end{equation*}
 holds for a basis $(E_i)$, then we have
\begin{align*}
 & \sum_{j=1}^{3} \ANG{\varepsilon E_1,\varepsilon E_j}_0 \cdot
    (\varepsilon \hat{E}_j).\\
 & {} = \ANG{\varepsilon E_1,E_1+\lambda E_2}_0 \cdot \hat{E}_1
  + \ANG{\varepsilon E_1,E_2}_0 \cdot (-\lambda \hat{E}_1+\hat{E}_2)
  + \ANG{\varepsilon E_1,E_3}_0 \cdot \hat{E}_3 \\
 & {} = \sum_{j=1}^{3} \ANG{\varepsilon E_1,E_j}_0 \cdot \hat{E}_j \\
 & {} = \sum_{j=1}^{3} \ANG{E_1,E_j}_0 \cdot \hat{E}_j
      + \lambda \sum_{j=1}^{3} \ANG{E_2,E_j}_0 \cdot \hat{E}_j \\
 & {} = E_1 E+\lambda E_2 E = (\varepsilon E_1) \cdot E.
\end{align*}
We omit the computations for
 $(\varepsilon E_i) \cdot E$ for $i=2,3$.

From these computations,
 one can conclude that
 if the assertion of the proposition holds for $(E_i)$,
 it holds for $(\varepsilon E_i)$ as well.
Computations for other elementary transformations are similar and omitted.
\end{proof}

By Proposition \ref{prop:alternating_sum},
 $\hat{L}_0 := \bigoplus_{i=1}^{n} \Z \hat{E}_i = L_0^\vee \cdot E$
 does not depend on a $\Z$-basis $(E_i)$ of $L_0$.
Under the following pairing $[~,~]_E$,
 the bases $(E_i)$ and $(\hat{E}_i)$
 are dual to each other,
 that is, $[E_i,\hat{E}_j]_E = \delta_{ij}$:
\begin{equation}
  L_0 \times \hat{L}_0 \rightarrow \Z; \quad
  (x,y) \mapsto [x,y]_E :=
  \frac{\pi(xy)}{\pi(E)}
  =\frac{\pi(xy)}{E_1 \wedge \cdots \wedge E_n}.
\end{equation}
Here $\pi$ is (the $\Q$-linear extension of)
 the following natural projection:
\begin{equation}
 \pi \colon \CLF{L} \rightarrow
 \frac{\CLF{L}}{\text{the image of $\bigoplus_{k=0}^{n-1} L^{\otimes k}$}}
 \cong \wedge^n L.
\end{equation}
This pairing is compatible with
 the lattice structures of $L_0$.
Namely, we have a canonical isomorphism
\begin{equation}
 L_0^\vee \cong \Hom(L_0,\Z) \cong \hat{L}_0; \quad
 E_i^\vee \leftrightarrow
 \ANGz{-,E_i^\vee}=[-,\hat{E}_i]_E \leftrightarrow \hat{E}_i.
\end{equation}
This map is also represented as $x \mapsto x E$.
It becomes an isomorphism of lattices
 if we equip $\hat{L}_0$ with the quadratic form
 $x \mapsto \disc(L_0)^{-1} \cdot x \CLFinvo{x}$.

The pairing $[~,~]_E$ has the ambiguity of sign,
 depending on $(E_i)$ (or $E$).
We can also write it as
 $[x,y]_E=\ANG{x,y \cdot E^{-1}}_0$.

\section{Appendix for orthogonal groups} \label{sect:appendix_ortho}

\subsection{Index $[\Gamma^\times(B) : \Gamma^1(B)]$} \label{subsect:index_1_2}
Set $k := [SO_\GAM(L) : SO^+_\GAM(L)] = [\Gamma^\times(B) : \Gamma^1(B)]$
 in the same notations as in Corollary \ref{cor:clifford_plus}.
If $L$ is definite, then $k=1$.
On the other hand, if $L$ is indefinite,
 then $k=1$ or $2$, where each case can occur as follows.
Set $O^-(L) := O(L) \setminus O^+(L)$.

(1)
Suppose that a sublattice $M \subset L$
 is of rank $2$ and indefinite, and that the negative Pell equation
 $x^2-D y^2=-4$ has a solution $(x,y) \in \Z^2$,
 where $D := -\OP{disc}(M) > 0$.
Then, by \cite{HKL},
 there exits $g \in SO^+(M)$ such that $-g$ acts on $A(M)$ trivially.
Hence $-g$ extends to an element in $SO_\GAM(L) \cap O^-(L)$.
Therefore the existence of such $M$ implies $k=2$.
A typical example satisfying this condition is
 $L=M \OSUM \LL{2a}$,
 where $M$ has discriminant $=-1,-4,-5,-8,-13$, etc, and
 $a$ is any non-zero integer.
Remark: If $L$ is indefinite and has an orthogonal basis,
 then $[SO(L) : SO^+(L)]=2$, although this does not ensure $k=2$.

(2)
Set $L_p := L \otimes \Z_p$ and $B_p := B \otimes \Z_p$.
Consider
 $L=\left( \begin{smallmatrix} 4&1 \\ 1&16 \end{smallmatrix} \right)%
 \OSUM \LL{-2 \cdot 3 \cdot 7^2}$ with discriminant
 $=-2 \cdot 3^3 \cdot 7^3$.
We assume here some basics on the number theory for
 lattices (or quadratic forms) and quaternion algebras
 (see e.g.\ \cite{O,vigneras80}).
Since the prime divisors of
 $\det(2 \cdot Q_B)=3^6 \cdot 7^6$
 are only $3$ and $7$,
 we have $B_{p} \cong M_2(\Z_p)$,
 and hence $SO(L_{p}) \cong PGL_2(\Z_p)$,
 for $p \neq 3,7$.
On the other hand, one has
\begin{equation*}
 L_{3} \cong \LL{1} \OSUM \LL{3} \OSUM \LL{3^2}, \quad
 L_{7} \cong \LL{1} \OSUM \LL{7} \OSUM \LL{7^2}.
\end{equation*}
Assume that there is an element $g \in SO(L) \cap O^-(L)$.
Let $\sigma=\theta(g)$ be the spinor norm of $g$.
One can verify
\begin{equation*}
 \sigma \equiv 1,3 ~ \bmod{(\Q_3^\times)^2}, \quad
 \sigma \equiv 1,7 ~ \bmod{(\Q_7^\times)^2}.
\end{equation*}
For $p \neq 3,7$, since $SO(L_{p}) \cong PGL_2(\Z_p)$,
 it follows that
 $\sigma \in \Z_p^\times \cdot (\Q_p^\times)^2$.
Hence
\begin{equation*}
 \sigma \equiv -1,-3,-7,-21 ~ \bmod{(\Q^\times)^2},
\end{equation*}
 because $g \in SO^-(L)$.
This is a contradiction
 and one finds $SO(L) = SO^+(L)$.
In particular, we have $k=1$.

\subsection{Example of an element in $\CLFM{L}^\times$}
\label{subsect:appendix_exm}


Let $L_0 = \LL{3} \OSUM \LL{-5} \OSUM \LL{-9}$.
Then $L_0$ does not represent $\pm 1$.
However, $\CLFM{L} \cong L_0 \OSUM \LL{135}$ represents 1 by
\begin{equation*}
 \alpha := 5 E_2+E_3+E_1 E_2 E_3 \in \CLFM{L}, \quad
 N \alpha = 5^2 \cdot (-5)-9+135 = 1.
\end{equation*}
Moreover, a direct computation shows that
 $h_\alpha := (v \mapsto - \alpha v \alpha^{-1}) \in O_\GAM(L)$
 is represented by the following matrix $P_\alpha$:
\begin{equation*}
 P_\alpha =
 \begin{pmatrix}
  -269 & 90 & -450\\
  54 & -19 & 90\\
  -150 & 50 & -251
 \end{pmatrix}, ~
 Q=
 \begin{pmatrix}
  3 & 0 & 0\\
  0 & -5 & 0\\
  0 & 0 & -9
 \end{pmatrix}, ~
 {P_\alpha}^T \cdot Q \cdot P_\alpha = Q.
\end{equation*}
On the other hand, $\CLFM{L}$ does  not represent $-1$
 (even over $\Z_3$).

\section{Appendix for $\CLFM{L}$ as a lattice}

Throughout this section, we suppose that
 $L$ is an even lattice of rank $3$
 and use the results in Section \ref{sect:cliff} freely.
We regard $L^\flat := \CLFM{L}$
 as an even lattice with the quadratic form
 $2 \cdot N v=2v v^*$ ($v \in \LatM$).
(See Remark \ref{rem:clf_minus_mat}(iii)
 for the intersection matrix of $\CLFM{L}$.)
Then $\LatM$ contains $L$ as a primitive sublattice.
Define $\xi_\alpha \in O(\LatM_\Q)$
 for $\alpha \in \CLFPM{L}_\Q^\times$ by
\begin{equation}
 \xi_\alpha \colon \LatM_\Q \rightarrow \LatM_\Q; \quad
 v \mapsto
 \frac{-1}{N \alpha} \cdot \alpha v^* \alpha.
\end{equation}
The reflection of $\LatM_\Q$
 with respect to $r \in \LatM_\Q$ with $N r \neq 0$
 is represented as $\xi_r$.
In fact, we have $\xi_r(r)=-r$ and, for $v \in r^\bot$,
\begin{equation*}
 v r^* + r v^* = 0, \quad
 \xi_r(v) = v.
\end{equation*}
Since $O(L^\flat_\Q)$ is generated by reflections,
 we have an isomorphism
\begin{equation}
 \frac{
  \{ (\alpha,\beta) \in \CLFP{L}_\Q^\times \times \CLFP{L}_\Q^\times
  \bigm| N \alpha = N \beta \}
 }{\{ (c,c) \bigm| c \in \Q^\times \}} \rtimes \ANG{\tau}
 \CONGa O(L^\flat_\Q),
\end{equation}
 where $(\alpha,\beta)$ corresponds to
\begin{equation}
 \Phi_{\alpha,\beta} \colon \LatM \rightarrow \LatM; \quad
 v \mapsto \alpha v \beta^{-1},
\end{equation}
 and $\tau := (v \mapsto v^*)$.

\begin{theorem}
In the same setting as above, we have
\begin{align}
 O_\GAM(L^\flat) = {} &
 \{ \Phi_{\alpha,\beta} \bigm|
 \alpha,\beta \in \CLFP{L}^\times, ~
 \alpha \beta^{-1} \in \Xi \} \\
 & {} \sqcup
 \{ \Phi_{\alpha,\beta} \circ \tau \bigm|
 \alpha,\beta \in \CLFM{L}^\times, ~
 - \alpha \beta^{-1} \in \Xi \},
\end{align}
 where $\Xi$ is a subset of $\CLFP{L}^\times$ defined by
\begin{equation}
 \Xi :=
 \{ \gamma \in 1+\CLFM{L} \cdot 2 E \bigm|
 N \gamma=1 \}.
\end{equation}
\end{theorem}
\begin{proof}
We have natural isomorphisms
\begin{equation}
 (\LatM)^\vee \cong (\CLFP{L},q), \quad
 A(\LatM) \cong
 ( \sA := \frac{\CLFP{L}}{\CLFM{L} \cdot 2 E} , q \, \bmod{2 \Z} ), \quad
\end{equation}
 where $v \in (\LatM)^\vee$ corresponds to $v \cdot 2 E \in \CLFP{L}$
 and the quadratic form $q$ on $\CLFP{L}$ is defined by
\begin{equation}
 q(x) = \frac{1}{2 \DDz} \cdot N x. 
\end{equation}
(See Section \ref{subsect:dual}.)
We observe that $\CLFM{L} \cdot 2 E$ is a two-sided ideal of
 $\CLFP{L}$ and $\sA$ becomes a ring.

For $\Phi_{\alpha,\beta} \in O(\LatM)$
 with $\alpha,\beta \in \CLFP{L}_\Q^\times$,
 we should have
\begin{equation*}
 N (\alpha \beta^{-1}) = 1, \quad
 \alpha \cdot \CLFP{L} \cdot \beta^{-1} = \CLFP{L}, \quad
 \gamma := \Phi_{\alpha,\beta}(1) = \alpha \beta^{-1} \in \CLFP{L}^\times.
\end{equation*}
Moreover, if $\Phi_{\alpha,\beta} \in O_\GAM(\LatM)$,
 then we have
 $\gamma \equiv 1$ in $\sA$.
Thus the induced action of $\Phi_{1,\gamma}$
 on $\sA \cong A(\LatM)$ is trivial,
 and hence
\begin{equation*}
 \Phi_{1,\gamma} \in O_\GAM(\LatM), \quad
 \Phi_{\alpha,\alpha} =
 \Phi_{1,\gamma} \circ \Phi_{\alpha,\beta} \in O_\GAM(\LatM).
\end{equation*}
We may assume $\alpha \in \CLFP{L}^\times$
 by replacing $\alpha$ by $c \alpha$ for some $c \in \Q^\times$ if necessary
 by Lemma \ref{lem:M_K_disc_kernel} below
 (combined with our Main Theorem).
Then $\beta = \gamma^{-1} \alpha \in \CLFP{L}^\times$.
Conversely, for $\alpha,\beta \in \CLFP{L}^\times$
 satisfying $\alpha \beta^{-1} \in \Xi$,
 we have $\Phi_{\alpha,\beta} \in O_\GAM(\LatM)$
 because $\alpha \equiv \beta$ in $\sA$.
A similar argument shows the following equivalence:
\begin{equation*}
 \alpha,\beta \in \CLFM{L}_\Q^\times, ~
 \Phi_{\alpha,\beta} \circ \tau \in O_\GAM(\LatM)
 \Leftrightarrow
 \alpha,\beta \in \CLFM{L}^\times, ~ -\alpha \beta^{-1} \in \Xi
\end{equation*}
 because we have $\tau(1)=-1$ under the isomorphism
 $(\LatM)^\vee \cong (\CLFP{L},q)$.
\end{proof}

\begin{lemma} \label{lem:M_K_disc_kernel}
Let $M$ be an even lattice and 
 $K$ be a primitive non-degenerate sublattice of $M$.
Set
\begin{equation}
 O(M,K) := \{ g \in O(M) \bigm| g(K)=K, ~ g|_{K^\bot}=\id \}.
\end{equation}
Then we have an isomorphism
\begin{equation} \label{eq:map_M_K}
 O_\GAM(M,K) := O(M,K) \cap O_\GAM(M)
 \cong O_\GAM(K); \quad g \mapsto g|_K.
\end{equation}
\end{lemma}
\begin{proof}
Let $g \in O_\GAM(M,K)$.
We shall show $g|_K \in O_\GAM(K)$.
By the exactness of the hom-functor of free $\Z$-modules,
 for $v \in K^\vee \cong \OP{Hom}_\Z(K,\Z)$,
 there exists $v' \in (K^\bot)^\vee$
 such that $v+v' \in M^\vee$.
Since $g(v+v') \equiv v + v'$ in $A(M)$, we have
\begin{equation*}
 g(v+v') = g(v) + v', \quad
 g(v)-v = g(v+v')-(v+v') \in M \cap K^\vee=K.
\end{equation*}
Thus $g|_K \in O_\GAM(K)$.
Conversely, we can extend each $g_0 \in O_\GAM(K)$
 to $g \in O(M)$ so that $g|_{K^\bot} = \id$.
Then $g$ should be contained in $O_\GAM(M,K)$.
Therefore the map in (\ref{eq:map_M_K}) is bijective.
\end{proof}

\vspace{0.5cm}

\par\noindent
Kenji Hashimoto
\par\noindent{\ttfamily kenji.hashimoto.math@gmail.com}
\break

\par\noindent
Kwangwoo Lee
\par\noindent{\ttfamily leekw@kias.re.kr}

\end{document}